\newcommand{\g}{\gamma}
\renewcommand{\(}{\left\(}
\renewcommand{\)}{\right\)}
\renewcommand{\[}{\left\[}
\renewcommand{\]}{\right\]}
\newtheorem{remark}[]{Remark}
\numberwithin{equation}{section}
 \theoremstyle{plain}
\newtheorem{theorem}{Theorem}[section]
\newtheorem{lemma}[theorem]{Lemma}
\newcommand{\sm}{\left(\begin{smallmatrix}}
\newcommand{\esm}{\end{smallmatrix}\right)}
\newtheorem{corollary}[theorem]{Corollary}
\newtheorem{proposition}[theorem]{Proposition}
\def\proof{\@ifnextchar[{\@oproof}{\@nproof}}
\def\@oproof[#1][#2]{\trivlist\item[\hskip\labelsep\textit{#2 Proof of\
#1.}~]\ignorespaces}
\def\@nproof{\trivlist\item[\hskip\labelsep\textit{Proof.}~]\ignorespaces}
\definecolor{blue}{rgb}{0,0,1}
\definecolor{red}{rgb}{1,0,0}
\definecolor{green}{rgb}{0,.6,.2}
\definecolor{purple}{rgb}{1,0,1}
\long\def\red#1\endred{{\color{red}#1}}
\long\def\blue#1\endblue{{\color{blue}#1}}
\long\def\purple#1\endpurple{{\color{purple}#1}}
\long\def\green#1\endgreen{{\color{green}#1}}
\begin{document}
\title{Arithmetic properties of the Herglotz-Zagier-Novikov function}
\author{YoungJu Choie}
\address{Department of Mathematics, Pohang Universtity of Science and Technology, \\
Pohang, Republic of Korea}
\email{yjc@postech.ac.kr}
\author{Rahul Kumar}
\address{Department of Mathematics, The Pennsylvania State University, University Park, PA 16802, U.S.A.}
\email{rjk6031@psu.edu} 


\subjclass[2000]{Primary 11F67, 11F99 Secondary 30D05, 33E20}
  \keywords{functional equations, Herglotz function, dilogarithm, cohomology, evaluations of integrals}
\maketitle
\pagenumbering{arabic}
\pagestyle{headings}
\begin{abstract}
In this article, we undertake the study of the function $\mathscr{F}(x;u,v)$, which we refer to as the \emph{Herglotz-Zagier-Novikov function}. This function appears in Novikov's work on the Kronecker limit formula, which was motivated by Zagier's paper where he obtained the Kronecker limit formula in terms of the Herglotz function $F(x)$. Two, three, and six-term functional equations satisfied by  $\mathscr{F}(x;u,v)$ are exhibited. These are cohomological relations  coming from the action of an involution and  SL$_2(\mathbb{Z})$ on $\mathbb{C}\times {\mathbb{D}_1}^2$ (the unit circle ${\mathbb{D}_1})$.  We also provide the special values of $\mathscr{F}(x;u,v)$ at rational arguments of $x$. Importantly, $\mathscr{F}(x;u,v)$ serves as a unified generalization of three other interesting functions, namely $F(x)$, $J(x)$, and $T(x)$, which also appear  in various Kronecker limit formulas  and   are previously studied by Cohen, Herglotz, Muzaffar and Williams, and Radchenko and Zagier. Consequently, our study not only reveals the numerous elegant properties of $\mathscr{F}(x;u,v)$ but also helps us to further develop the theories of functions related to its special cases such as $J(x)$ and $T(x)$.     
\end{abstract}
\tableofcontents

\section{Introduction and Motivation}

Zagier's seminal paper \cite{zagier1975} on the Kronecker limit formula for real quadratic fields has been a source of inspiration for many researchers in the field of number theory. Among these, Novikov \cite{novikov}  obtained a Kronecker limit formula involving an interesting function that, until now, remained unexplored. In this paper, we initiate the study of properties of this function, which we now refer to as the \emph{Herglotz-Zagier-Novikov function}, and denote it by $\mathscr{F}(x;u,v)$. We define it as follows:
\begin{align}\label{novikov function}
\mathscr{F}(x;u,v):=\int_0^1\frac{\log\left(1-ut^x\right)}{v^{-1}-t}dt, \quad  \mathrm{Re}(x)>0,
\end{align}
where  $u\in\mathbb{C}\backslash(1,\infty), u\neq0$ and $v\in\mathbb{C}\backslash[1,\infty),  v\neq0$. The main objective of this paper is to showcase the  multitude of  intriguing  properties inherent in the function $\mathscr{F}(x;u,v)$. Moreover,  using its properties, we further develop the theories of other functions that have been studied previously in the literature.

The Herglotz-Zagier-Novikov function $\mathscr{F}(x;u,v)$ serves as a unified generalization of three functions studied by Herglotz \cite{her1}, Zagier \cite{zagier1975}, and Muzaffar and Williams \cite{muzwil} in their work on various Kronecker limit formulas. To begin, we  define  these individual functions:
\begin{align}\label{herglotz defn}
F(x ):=\int_0^1\left(\frac{1}{1-t}+\frac{1}{\log(t)}\right)\log(1-t^x)\frac{dt}{t},  
\end{align}
\begin{align}\label{j(x)}
J(x):=\int_0^1\frac{\log(1+t^x)}{1+t}dt,
\end{align}
and 
\begin{align}\label{t(x) defn}
T(x):=\int_0^\infty\frac{\tan^{-1}(t^x)}{1+t^2}dt,  
\end{align}
for $\mathrm{Re}(x)>0$. The function $F(x)$ appears in the work of Herglotz \cite{her1} in connection with the Kronecker limit formula for real quadratic fields. 
It can be analytically continued through the expression \cite[Equation (1.2)]{raza}
\begin{align}\label{herglotz defn ser}
F(x)=\sum_{n\geq 1}\frac{\psi(nx)-\log(nx)}{n} \quad (x\in \mathbb{C}':=\mathbb{C}\backslash (-\infty,0]),
\end{align}
where $\psi(x):=\Gamma'(x)/\Gamma(x)$ denotes  the digamma function.
Later on, in his breakthrough work on the Kronecker limit formula for real quadratic fields, Zagier \cite{zagier1975} rediscovered the function $F(x)$  and also initiated its systematic study. In particular, the following  two-term and three-term   cohomological relations, namely, the  period relations on SL$_2(\mathbb{Z})$,   were given by Zagier \cite[Section 7]{zagier1975}:
\begin{align}
&F(x)+F\left(\frac{1}{x}\right)=-2\left(\frac{1}{2}\g^2+\frac{\pi^2}{12}+\g_1\right)+\frac{1}{2}\log^{2}(x)-\frac{\pi^2}{6x}(x-1)^2,\label{fe2}\\
&F(x)-F(x+1)-F\left(\frac{x}{x+1}\right)=\frac{1}{2}\g^2+\frac{\pi^2}{12}+\g_1+\textup{Li}_2\left(\frac{1}{1+x}\right),\label{fe1}
\end{align}
where $\g$ is the Euler's constant and $\g_1$ is the first Stieltjes constant. Here, and throughout the paper, $\mathrm{Li}_2(u)$ denotes the dilogarithm function, which is a special case of the polylogarithm function \cite{lewin}, \cite[p.~5]{zagierdilog}
\begin{align*}
\mathrm{Li}_s(z):=\sum_{n=1}^\infty\frac{z^n}{n^s} \qquad (z,s\in\mathbb{C},\ |z|<1).
\end{align*}
Very recently, Radchenko and Zagier \cite{raza} studied the functions $F(x)$ and $J(x)$ extensively and revealed their connections to Stark's conjecture, Hecke operators, and the cohomology of the modular group PSL$_2(\mathbb{Z})$. They also  found the following connection between the functions $F(x)$ and $J(x)$ \cite{raza}:
\begin{align}\label{jxfx}
J(x)=F(2x)-2F(x)+F\left(\frac{x}{2}\right)+\frac{\pi^2}{12x}.
\end{align} 
In their paper, the function $F(x)$ is referred to as the \emph{Herglotz function}, while Masri \cite{masri} named it the \emph{Herglotz-Zagier function}. Building upon these analogies, we call the function $\mathscr{F}(x;u,v)$  the \emph{Herglotz-Zagier-Novikov function}, as it appears in the work of Novikov and also it reduces to $F(x)$ as the limits of 
$u$ and $v$ approach $1$. More precisely:
\begin{align*}
\lim_{\substack{u,v\to1}}\left\{\mathscr{F}(x;u,v)-\frac{1}{x}\mathrm{Li}_2(u)+\log(1-u)\left(\log(1-v)+\gamma+\log(x)\right)+\mathrm{Li}_s'(u)|_{s=1}\right\}=F(x).
\end{align*}
We state this result precisely in Proposition \ref{connection $F$} below. 

The functions $J(x), F(x)$ and $T(x)$ also appeared in the Kronecker limit formula of a restricted Epstein zeta function studied by Muzaffar and Williams \cite[p.~41, Theorem 2]{muzwil}\footnote{$F(x)$ is present in the function $V(d)$ as it is easy to see, by using our \eqref{herglotz defn}, that the second integral in $V(d)$ is nothing but $F(\epsilon)-F(\epsilon')-\frac{\pi^2(\epsilon-\epsilon')}{6}$.}, where they also offered the special values of $J(x)$ and $T(x)$  at  some units of real quadratic fields \cite[Theorems 6--10]{muzwil}. 
The authors of \cite{raza} found explicit evaluations of $J(x)$ at several quadratic units of the real quadratic field \cite[pp.~244--245, Tables 1 and 2]{raza} with the help of the theory of the Herglotz function $F(x)$. 
Also, see books \cite[p.~145, Exercise 102(a) \& p.~271, Exercise (60)]{cohen} and \cite[pp.~132--133]{br}. A two-parameter generalization of $J(x)$ has been given recently in \cite[Theorem 2.8]{hhf}. 

Not only $F(x)$ is a special case of $\mathscr{F}(x;u,v)$, but the functions $J(x)$ and $T(x)$ can also be obtained from the function $\mathscr{F}(x;u,v)$:
\begin{align*}
\mathscr{F}(x;-1,-1)=-J(x),
\end{align*}
and
\begin{align*}
 \frac{1}{4}\left\{\mathscr{F}\left(x;i,i\right)+\mathscr{F}\left(x;-i,-i\right)-\mathscr{F}\left(x;i,-i\right)-\mathscr{F}\left(x;-i,i\right)\right\}=T(x).
\end{align*}
Refer to Sections \ref{jx section} and \ref{tx section} where we prove these relations and obtain further properties of $J(x)$ and $T(x)$.

After exploring the functions $F(x)$, $J(x)$, and $T(x)$, along with their connections to the function $\mathscr{F}(x;u,v)$, we proceed to examine the main properties of $\mathscr{F}(x;u,v)$ in subsequent sections. However, we outline our plan briefly here and elaborate on it in the upcoming sections.
By considering the relation  of element  $U^6=\sm 1 & -1\\ 1& 0 \esm^6=I $ of SL$_2(\mathbb{Z})$, we obtain   six-term cohomological period relations and involution $\sm 0 & 1\\ 1 & 0\esm$  gives two-term period relations. For instance, one of the functional equations that we obtain for $\mathscr{F}(x;u,v)$ is:
\begin{align}\label{2termint}
\mathscr{F}\left(x;u,v\right)+\mathscr{F}\left(\frac{1}{x};v,u\right)=-\log\left(1-u\right)\log\left(1-v\right).
\end{align}
See  Section \ref{fxuv} below for more details on the theory of $\mathscr{F}(x;u,v)$. Moreover, we provide special values of $\mathscr{F}(x;u,v)$ when $x$ is a rational number.

As each of the functions $J(x)$, $T(x)$, and $F(x)$ can be derived from $\mathscr{F}(x;u,v)$, as demonstrated above, we derive functional equations for these functions as well. For example, \eqref{2termint} gives the following new functional equations for $J(x)$ and $T(x)$ as special cases:
\begin{align*}
J(x)+J\left(\frac{1}{x}\right)&=\log^2(2),\\
T(x)+T\left(\frac{1}{x}\right)&=\frac{\pi^2}{16}.
\end{align*}
Refer to Sections \ref{jx section} and \ref{tx section} for more details on these relations. We here mention that while the functions $J(x)$ and $T(x)$ have been previously studied,  these simple functional equations surprisingly do not appear in the literature. Additionally, we offer explicit evaluations of $J(x)$ and $T(x)$ at natural numbers as special cases of $\mathscr{F}(x;u,v)$ in Sections \ref{jx section} and \ref{tx section}. 

After giving a glimpse of some of the properties of the Herglotz-Zagier-Novikov function $\mathscr{F}(x;u,v)$, we proceed to present our main results in the subsequent sections. 


\medskip

\section{Properties of the Herglotz-Zagier-Novikov function $\mathscr{F}(x;u,v)$}\label{fxuv}

Note that initially, we have defined the function $\mathscr{F}(x;u,v)$ for $\text{Re}(x)>0$ in \eqref{novikov function}. However, the following proposition provides its analytic continuation to a larger region.
\begin{proposition}\label{analytic continuation 1}
For non-zero complex numbers such that $|u|<1$ and $|v|<1$, $\mathscr{F}(x;u,v)$ can be extended analytically by
\begin{align}\label{analytic continuation}
\mathscr{F}(x;u,v)=-\sum_{m,n\geq1}^\infty\frac{u^mv^{n}}{m(mx+n)}, \quad x\in\mathbb{C}'=\mathbb{C}\backslash(-\infty,0].
\end{align}
\end{proposition}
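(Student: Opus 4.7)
\begin{pf}[Proof proposal for Proposition \ref{analytic continuation 1}.]
The plan is to first derive the double-series expression directly from the integral definition on the half-plane $\mathrm{Re}(x)>0$, and then use it to extend $\mathscr{F}(x;u,v)$ analytically to $\mathbb{C}'$.

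First I would fix $x$ with $\mathrm{Re}(x)>0$. Since $|u|<1$ and $|t^x|=t^{\mathrm{Re}(x)}\leq 1$ for $t\in(0,1]$, the power series
\begin{align*}
\log(1-ut^x)=-\sum_{m=1}^{\infty}\frac{u^{m}t^{mx}}{m}
\end{align*}
converges absolutely. Since $|v|<1$ and $t\in[0,1]$, the geometric series
\begin{align*}
\frac{1}{v^{-1}-t}=\frac{v}{1-vt}=\sum_{n=0}^{\infty}v^{n+1}t^{n}
\end{align*}
also converges absolutely. Multiplying these two expansions produces a double series whose integral in $t$ is, after the elementary evaluation $\int_{0}^{1}t^{mx+n}\,dt=1/(mx+n+1)$, exactly $-\sum_{m,n\geq 1}u^{m}v^{n}/(m(mx+n))$ upon shifting $n\mapsto n-1$. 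The interchange of the double sum with the integral is justified by Fubini–Tonelli, since the absolute integrand is bounded by a product of convergent series.

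Next I would prove the holomorphy of the right-hand side of \eqref{analytic continuation} on $\mathbb{C}'$. The key geometric observation is that for any compact subset $K\subset\mathbb{C}'$, the quantity $\rho:=\inf_{x\in K}\mathrm{dist}(x,(-\infty,0])$ is strictly positive. Because $-n/m\in(-\infty,0)$ for all $m,n\geq 1$, we have
\begin{align*}
|mx+n|=m\left|x-\left(-\tfrac{n}{m}\right)\right|\geq m\,\rho
\end{align*}
uniformly for $x\in K$. Consequently each term of the double series is dominated by $|u|^{m}|v|^{n}/(m^{2}\rho)$, and the Weierstrass $M$-test gives uniform convergence on $K$. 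Term-by-term holomorphy then shows that the double series defines a holomorphic function on all of $\mathbb{C}'$.

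Finally, since this holomorphic function agrees with the integral $\mathscr{F}(x;u,v)$ on the open half-plane $\mathrm{Re}(x)>0\subset\mathbb{C}'$, the identity theorem yields the desired analytic continuation. The only subtle point is the uniform lower bound on $|mx+n|$; the rest is routine manipulation of absolutely convergent series.
\end{pf}
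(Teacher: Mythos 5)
Your proposal is correct and follows essentially the same route as the paper: expand $\log(1-ut^x)$ and $v/(1-vt)$ as power series, integrate term by term, and reindex to obtain the double series. The only difference is that you supply the explicit uniform bound $|mx+n|\geq m\rho$ on compact subsets of $\mathbb{C}'$ together with the identity theorem, whereas the paper simply asserts the uniform convergence as clear; your added detail is welcome but does not change the argument.
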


To give further properties, let us denote
\begin{align}\label{set}
\mathbb{D}:=\left\{z\in\mathbb{C}\backslash\{0\}:|z|\leq1\right\},
\qquad \mathbb{D}':=\mathbb{D}\backslash\{1\},
\end{align}
\begin{align}\label{set1}
\mathbb{D}_1:=\left\{z\in\mathbb{C}  :  |z|=1 \right\} \qquad \mathrm{ and}\qquad {\mathbb{D}_1}':=\mathbb{D}_1\backslash\{1\}.
\end{align}

The function $\mathscr{F}\left(x;u, v\right)$ satisfies the following relations.
\begin{theorem}\label{sum formula for Jx}\textup{(}{\bf   Duplication formula}\textup{)}
Take $u\in\mathbb{D}$ and $v,v^2\in\mathbb{D}'.$  Then, for $\mathrm{Re}(x)>0$, the relations holds
\begin{align}
\mathrm{(1)}\qquad \mathscr{F}\left(2x;u^2,v\right)=\mathscr{F}\left(x;u,v\right)+\mathscr{F}\left(x;-u,v\right).\label{sum formula for Jx eqn1}\\
\mathrm{(2)}\qquad \mathscr{F}\left(\frac{x}{2};u,v^2\right)=\mathscr{F}\left(x;u,v\right)+\mathscr{F}\left(x;u,-v\right).\label{sum formula for Jx eqn2}
\end{align}
\end{theorem}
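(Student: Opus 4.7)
The plan is to work directly with the integral representation \eqref{novikov function}, which is valid throughout $\mathrm{Re}(x)>0$ as assumed in the hypothesis; this avoids needing the series \eqref{analytic continuation} whose range of validity is more restrictive. Both identities then reduce to purely algebraic manipulations inside the integrand.

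For (1), the key observation will be the factorization
\[
1-u^2 t^{2x}=(1-ut^x)(1+ut^x)=(1-ut^x)\bigl(1-(-u)t^x\bigr),
\]
so that $\log(1-u^2t^{2x})=\log(1-ut^x)+\log(1-(-u)t^x)$ provided the principal branch of $\log$ splits across this product. Plugging this into \eqref{novikov function} and separating the resulting integral by linearity immediately yields \eqref{sum formula for Jx eqn1}. The verification of the logarithmic splitting is standard: since $u\in\mathbb{D}$ and $t\in(0,1)$ with $\mathrm{Re}(x)>0$, one has $|ut^x|\le t^{\mathrm{Re}(x)}<1$, so both $1\pm ut^x$ lie in the open disc of radius $1$ around $1$, hence in the right half-plane. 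Their principal arguments therefore belong to $(-\pi/2,\pi/2)$, so their sum lies in $(-\pi,\pi)$ and the logarithm adds with the principal branch.

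For (2), the plan is to perform the change of variables $t=s^2$ in $\mathscr{F}(x/2;u,v^2)$. Using $dt=2s\,ds$ and $(s^2)^{x/2}=s^x$ for $s>0$, this gives
\[
\mathscr{F}\left(\tfrac{x}{2};u,v^2\right)=\int_0^1\frac{2s\,\log(1-us^x)}{v^{-2}-s^2}\,ds.
\]
The partial fraction identity
\[
\frac{2s}{v^{-2}-s^2}=\frac{1}{v^{-1}-s}-\frac{1}{v^{-1}+s}=\frac{1}{v^{-1}-s}+\frac{1}{(-v)^{-1}-s}
\]
then splits the integral as $\mathscr{F}(x;u,v)+\mathscr{F}(x;u,-v)$, which is \eqref{sum formula for Jx eqn2}.

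The argument is quite short and I do not expect any deep obstacle; the only point that needs care is to check that every $\mathscr{F}$ appearing in (1) and (2) is actually defined, i.e., that $-u,u^2\in\mathbb{D}$ and that $v,-v,v^2$ all avoid $[1,\infty)$. This is exactly what the hypotheses $u\in\mathbb{D}$ and $v,v^2\in\mathbb{D}'$ enforce: in particular $v^2\in\mathbb{D}'$ forces $v\neq\pm 1$, so $-v\in\mathbb{D}'$ as well.
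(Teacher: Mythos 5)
Your proof is correct and follows essentially the same route as the paper: part (1) is the identical logarithm factorization $\log(1-u^2t^{2x})=\log(1-ut^x)+\log(1+ut^x)$, and part (2) rests on the same partial-fraction splitting of the denominator, which the paper carries out after the substitution $t=e^{-w}$ on $[0,\infty)$ while you do it directly on $[0,1]$ via $t=s^2$ --- a purely cosmetic difference. Your added checks (the branch of the logarithm and the admissibility of $-u$, $u^2$, $-v$, $v^2$) are correct and slightly more careful than the paper's.
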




\subsection{Two, three and six-term functional equations  of $\mathscr{F}(x;u,v)$}
The Herglotz-Zagier-Novikov function $\mathscr{F}(x;u,v)$ satisfies the following functional equations similar to that of $F(x)$ given above in \eqref{fe2} and \eqref{fe1}.
\begin{theorem}\label{functional equations}
Let $\mathscr{F}(x;u,v)$ be defined in \eqref{novikov function} or \eqref{analytic continuation} and $x\in\mathbb{C}'$. 
\begin{enumerate}
\item For $u,v\in \mathbb{D}'$, we have 
\begin{align}\label{twoterm fe}
\mathscr{F}\left(x;u,v\right)+\mathscr{F}\left(\frac{1}{x};v,u\right)=-\log\left(1-u\right)\log\left(1-v\right).
\end{align}
\item For $ u,v, uv\in\mathbb{D}'$, we have
\begin{align}\label{three-term fe}
&\mathscr{F}(x;u,v)-\mathscr{F}\left(x+1;uv,v\right)-\mathscr{F}\left(\frac{x}{x+1};u,uv\right)\nonumber\\
&=\log(1-u)\log\left(1-uv\right)+\mathrm{Li}_2\left(u\right)-\mathrm{Li}_2\left(\frac{v}{v-1}\right)+2\mathrm{Li}_2\left(\frac{u}{u-1}\right)-\mathrm{Li}_2\left(\frac{u-v}{1-v}\right)\nonumber\\
&\quad-\left(\frac{1}{x+1}-\frac{1}{2}\right)\mathrm{Li}_2\left(uv\right)-\sum_{j=1}^2\left\{\mathrm{Li}_2\left(\frac{u+\sqrt{uv}e^{\pi ij}}{u-1}\right)
-\mathrm{Li}_2\left(\frac{v+\sqrt{uv}e^{\pi ij}}{v-1}\right)\right\}.
\end{align}
\item 
For $ u, v \in {\mathbb{D}_1}',$
we have
\begin{align}\label{six-term fe}
&\mathscr{F}(x;u,v)+\mathscr{F}(x;u^{-1},v^{-1})
-\mathscr{F}\left(x+1;uv,v\right)-\mathscr{F}\left(x+1;(uv)^{-1},v^{-1}\right)
\\
&-\mathscr{F}\left(\frac{x}{x+1};u,uv\right) 
 -\mathscr{F}\left(\frac{x}{x+1};u^{-1},(uv)^{-1}\right)\nonumber\\
&=\log(1-u)\log\left(1-uv\right)+\log(1-u^{-1})\log\left(1-(uv)^{-1}\right)+2\log\left(\frac{u}{u-1}\right)\log(1-u)\nonumber\\
&-\frac{1}{2}\log^2(-u)-\log\left(\frac{v}{v-1}\right)\log(1-v)+\left(\frac{1}{x+1}-\frac{1}{2}\right)\left(\frac{1}{2}\log^2(-uv)+\frac{\pi^2}{6}\right)-\mathrm{Li}_2\left(\frac{u-v}{1-v}\right)\nonumber\\
&-\mathrm{Li}_2\left(\frac{u-v}{u(1-v)}\right)-\sum_{j=1}^2\left\{\mathrm{Li}_2\left(\frac{u+\sqrt{uv}e^{\pi ij}}{u-1}\right)-\mathrm{Li}_2\left(\frac{v+\sqrt{uv}e^{\pi ij}}{v-1}\right)+\mathrm{Li}_2\left(\frac{1+\sqrt{uv^{-1}}e^{\pi ij}}{1-u}\right)\right.\nonumber\\
&\qquad\left.-\mathrm{Li}_2\left(\frac{1+\sqrt{vu^{-1}}e^{\pi ij}}{1-v}\right)\right\}.
\end{align}
\end{enumerate}
\end{theorem}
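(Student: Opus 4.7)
I will derive all three functional equations starting from the double-series representation
\begin{equation*}
\mathscr{F}(x;u,v)=-\sum_{m,n\geq 1}\frac{u^m v^n}{m(mx+n)}
\end{equation*}
of Proposition \ref{analytic continuation 1}, which converges absolutely when $|u|,|v|<1$; the identities then extend to the stated regions by analytic continuation. The two-term relation is a direct index-swap manipulation. The three-term relation I will verify by (a) matching the $x$-derivative of both sides, which reduces to a single diagonal sum equal to $\mathrm{Li}_2(uv)/(x+1)^2$, and (b) pinning down the resulting $(u,v)$-constant of integration by taking the limit $x\to 0^+$. The six-term relation then follows by applying the three-term relation simultaneously to $(x;u,v)$ and $(x;u^{-1},v^{-1})$ (both pairs lie in $\mathbb{D}'\times\mathbb{D}'$ when $u,v\in{\mathbb{D}_1}'$) and simplifying the RHS by the standard $\mathrm{Li}_2$ inversion, Landen, and Euler reflection identities.

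\textbf{Two-term relation.} Writing
\begin{equation*}
\mathscr{F}\!\left(\tfrac{1}{x};v,u\right)=-\sum_{m,n\geq 1}\frac{v^m u^n\, x}{m(m+nx)}=-\sum_{m,n\geq 1}\frac{u^m v^n\, x}{n(mx+n)}
\end{equation*}
(after renaming $m\leftrightarrow n$) and adding to $\mathscr{F}(x;u,v)$, the bracket combines as $\tfrac{1}{m(mx+n)}+\tfrac{x}{n(mx+n)}=\tfrac{mx+n}{mn(mx+n)}=\tfrac{1}{mn}$, reducing the sum to $-\sum u^m v^n/(mn)=-\log(1-u)\log(1-v)$.

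\textbf{Three-term relation.} Using $\partial_y\mathscr{F}(y;u,v)=\sum u^m v^n/(my+n)^2$ together with the chain rule and the substitution $k=m+n$, one shows
\begin{equation*}
\partial_x\mathscr{F}(x+1;uv,v)=\sum_{k>m\geq 1}\frac{u^m v^k}{(mx+k)^2},\qquad \partial_x\mathscr{F}\!\left(\tfrac{x}{x+1};u,uv\right)=\sum_{k>n\geq 1}\frac{u^k v^n}{(kx+n)^2}.
\end{equation*}
Subtracting these from $\partial_x\mathscr{F}(x;u,v)=\sum_{m,n\geq 1}u^m v^n/(mx+n)^2$, the $m\neq n$ terms cancel and only the diagonal survives, yielding $\sum_m(uv)^m/(m(x+1))^2=\mathrm{Li}_2(uv)/(x+1)^2$, which matches the $x$-derivative of the claimed RHS. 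Hence (2) holds up to a function $C(u,v)$. To fix $C(u,v)$, take $x\to 0^+$: from the series, $\mathscr{F}(x;u,v)\to-\log(1-u)\log(1-v)$ and $\mathscr{F}(x/(x+1);u,uv)\to-\log(1-u)\log(1-uv)$, while $\mathscr{F}(x+1;uv,v)\to\mathscr{F}(1;uv,v)$, so the task reduces to an explicit evaluation of
\begin{equation*}
\mathscr{F}(1;u,v)=\int_0^1\frac{\log(1-ut)}{v^{-1}-t}\,dt,
\end{equation*}
which I would compute via the substitution $s=1-ut$ together with the antiderivative $\int\log(s)/(s-a)\,ds=-\mathrm{Li}_2(1-s/a)+\log(a)\log(s-a)+C$, followed by regrouping the resulting dilogarithms via Abel's five-term relation; the two values $\sqrt{uv}\,e^{\pi ij}$ for $j=1,2$ appearing on the RHS reflect the two square roots of $uv$ that emerge when Abel's relation is applied to a pair of dilogarithm arguments whose product factors as a perfect square in $uv$.

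\textbf{Six-term relation and main obstacle.} Applying (2) to $(x;u,v)$ and to $(x;u^{-1},v^{-1})$ and summing produces the LHS of (3); on the RHS, pairs of the form $\mathrm{Li}_2(z)+\mathrm{Li}_2(1/z)$ collapse via $\mathrm{Li}_2(z)+\mathrm{Li}_2(1/z)=-\pi^2/6-\tfrac{1}{2}\log^2(-z)$, while pairs involving $\mathrm{Li}_2(w)+\mathrm{Li}_2(w/(w-1))$ and $\mathrm{Li}_2(w)+\mathrm{Li}_2(1-w)$ collapse by Landen's $\mathrm{Li}_2(z/(z-1))=-\mathrm{Li}_2(z)-\tfrac{1}{2}\log^2(1-z)$ and Euler's reflection $\mathrm{Li}_2(z)+\mathrm{Li}_2(1-z)=\pi^2/6-\log(z)\log(1-z)$; this is exactly how the new terms $2\log(u/(u-1))\log(1-u)-\tfrac{1}{2}\log^2(-u)$, $-\log(v/(v-1))\log(1-v)$, and $(\tfrac{1}{x+1}-\tfrac{1}{2})(\tfrac{1}{2}\log^2(-uv)+\tfrac{\pi^2}{6})$ in (3) arise from the inversion constants. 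The main obstacle throughout is the bookkeeping in fixing $C(u,v)$: matching the clean result of the $x\to 0^+$ limit against the exotic combination on the RHS of (2) requires the correct specialization of Abel's five-term relation to arguments of the form $(u\pm\sqrt{uv})/(u-1)$ and $(v\pm\sqrt{uv})/(v-1)$, and one must carefully track branches of $\log$ and $\mathrm{Li}_2$ so that the identities hold uniformly as $(u,v)$ varies over $\mathbb{D}'\times\mathbb{D}'$, and then over ${\mathbb{D}_1}'\times{\mathbb{D}_1}'$ for (3) where some arguments may straddle the cut $[1,\infty)$.
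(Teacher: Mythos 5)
Your overall strategy is sound and genuinely different from the paper's in parts (1) and (2): the paper works from the integral representation, differentiates in $x$, and fixes constants of integration by evaluating at $x=1$ (for the two-term relation) and at the point corresponding to $x=1$ in the theorem's normalization (for the three-term relation), using the rational-argument evaluation $\mathscr{F}(n;u,v)$ of Corollary \ref{novikov funct eval}. Your series-based proof of the two-term relation is complete and arguably cleaner than the paper's (the identity $\tfrac{1}{m(mx+n)}+\tfrac{x}{n(mx+n)}=\tfrac{1}{mn}$ does the whole job with no constant to evaluate), and your derivative computation for the three-term relation, with the diagonal surviving to give $\mathrm{Li}_2(uv)/(x+1)^2$, is correct and matches the $x$-derivative of the stated right-hand side. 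Part (3) is handled exactly as in the paper.

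The genuine gap is in fixing the constant $C(u,v)$ for the three-term relation. Your $x\to 0^+$ limit is legitimate and yields, after inserting $\mathscr{F}(1;uv,v)=\mathrm{Li}_2\left(\frac{v}{v-1}\right)+\mathrm{Li}_2(uv)-\mathrm{Li}_2\left(\frac{v(u-1)}{1-v}\right)$, a compact four-term expression for $C(u,v)$. But the theorem's right-hand side expresses this same constant through eight dilogarithms whose arguments involve the two square roots $\sqrt{uv}\,e^{\pi ij}$, and the equality of your expression with the stated one is itself a nontrivial two-variable dilogarithm identity that you assert can be obtained from Abel's five-term relation but do not carry out; as written, your argument proves \emph{a} three-term functional equation, not the one stated. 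The paper sidesteps this by choosing the evaluation point where the arguments become $1$, $2$, and $\tfrac12$, so that the $n=2$ case of Corollary \ref{novikov funct eval} (together with the two-term relation to convert $\mathscr{F}(\tfrac12;u,uv)$ into $\mathscr{F}(2;uv,u)$) produces the $\frac{u+\sqrt{uv}e^{\pi ij}}{u-1}$ and $\frac{v+\sqrt{uv}e^{\pi ij}}{v-1}$ arguments directly, with no five-term gymnastics needed. To close your proof you should either switch to that evaluation point or actually supply the Abel-relation computation identifying your $C(u,v)$ with the stated combination; the same caveat then propagates to part (3), whose right-hand side is obtained by simplifying two copies of the three-term constant.
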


\begin{remark}
Let $\mathrm{Re}(x)>0$. It is easy to see that the two-term functional equation is valid for $u,v\in\mathbb{C}\backslash[1,\infty), (u,v)\neq(0,0)$. The three-term functional equation can also be derived for $u\in\mathbb{C}\backslash(1,\infty),\ u\neq0$ and $v\in\mathbb{C}\backslash[1,\infty),\ v\neq0$, namely, 
\begin{align}\label{fe general}
&\mathscr{F}(x;u,v)-\mathscr{F}(x+1;u v,v)-\mathscr{F}\left(\frac{x}{x+1};u,uv\right)\nonumber\\
&=\left(\frac{1}{2}-\frac{1}{x+1}\right)\mathrm{Li}_2(u v)-\mathscr{F}(2;uv,v)+\mathscr{F}(1;u,v)-\mathscr{F}\left(\frac{1}{2};u,uv\right).
\end{align}
See \eqref{general fe} below. We have to assume the said values of $u$ and $v$ in Theorem \ref{functional equations} to evaluate the constants involved on the right-hand side of \eqref{fe general} in terms of well-known functions. The six-term functional equation can also be obtained similarly using \eqref{fe general}. 
\end{remark}

\begin{remark}
If we introduce the function
\begin{align*}
\mathfrak{F}(x;u,v):=\mathscr{F}(x;u,v)+\frac{1}{2}\log(1-u)\log(1-v),
\end{align*}
then invoking \eqref{twoterm fe}, we obtain a simpler functional equation
\begin{align}\label{simpf}
\mathfrak{F}(x;u,v)+\mathfrak{F}\left(\frac{1}{x};v,u\right)=0.
\end{align}
It should be compared to \cite[Equation (2.7)]{raza}.
\end{remark}

The proposition given next states  connections between $F(x)$ and  $\mathscr{F}(x;u,v)$: 
\begin{proposition}  \label{connection $F$}
For $\mathrm{Re}(x)>0$, one has
\begin{align}
&(1)\lim_{\substack{u,v\to1}}\left\{\mathscr{F}(x;u,v)-\frac{1}{x}\mathrm{Li}_2(u)+\log(1-u)\left(\log(1-v)+\gamma+\log(x)\right)+\mathrm{Li}_s'(u)|_{s=1}\right\}=F(x).\nonumber\\
&(2)\ \mathscr{F}(x;1,-1)=F\left(\frac{x}{2}\right)-F(x)+\frac{\pi^2}{6x}.\label{hn to her}\\
&(3) \lim_{v\to1}\left\{\mathscr{F}(x;-1,v)+\log(2)\log(1-v)\right\}=F(2x)-F(x)-\frac{1}{2}\log(2)\log(2x^2)-\frac{\pi^2}{12x}.\label{vto1}
\end{align}
\end{proposition}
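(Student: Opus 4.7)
All three identities flow from the series representation \eqref{analytic continuation} combined with the algebraic identity $\tfrac{1}{m(mx+n)}=\tfrac{1}{mn}-\tfrac{x}{n(mx+n)}$. Inserting this into the double sum and evaluating $-\sum u^{m}v^{n}/(mn)=-\log(1-u)\log(1-v)$ rewrites
\begin{align*}
\mathscr{F}(x;u,v)+\log(1-u)\log(1-v)=x\sum_{m,n\geq 1}\frac{u^{m}v^{n}}{n(mx+n)}.
\end{align*}
For fixed $|u|<1$ and $v\to 1$, dominated convergence (with envelope $|u|^{m}/[n(mx+n)]$, which is summable) legitimises passing the limit inside the $n$-sum, and the classical digamma identity $\sum_{n\geq 1}\bigl[\tfrac{1}{n}-\tfrac{1}{n+mx}\bigr]=\psi(mx+1)+\gamma$ gives
\begin{align*}
\lim_{v\to 1}\bigl[\mathscr{F}(x;u,v)+\log(1-u)\log(1-v)\bigr]=\sum_{m\geq 1}\frac{u^{m}(\psi(mx)+\gamma)}{m}+\frac{\mathrm{Li}_{2}(u)}{x}, \qquad |u|<1,
\end{align*}
which is the key intermediate formula driving parts (1) and (3).

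\textbf{Parts (1) and (3).} Substituting the intermediate into the expression inside the limit in (1) cancels $\mathrm{Li}_{2}(u)/x$. The decomposition $\psi(mx)+\gamma=\log m+\log x+\gamma+[\psi(mx)-\log(mx)]$, together with $\sum_{m\geq 1}u^{m}/m=-\log(1-u)$ and $\mathrm{Li}_s'(u)|_{s=1}=-\sum_{m\geq 1}u^{m}\log m/m$, forces all divergent pieces to cancel, leaving only $\sum_{m\geq 1}u^{m}[\psi(mx)-\log(mx)]/m$. Since $\psi(mx)-\log(mx)=O(1/m)$, this series converges absolutely at $u=1$ to $F(x)$ by \eqref{herglotz defn ser}, so Abel's theorem delivers the limit $u\to 1$. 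For (3) I set $u=-1$ directly (using $\mathrm{Li}_{2}(-1)=-\pi^{2}/12$) and evaluate $\sum_{m\geq 1}(-1)^{m}(\psi(mx)+\gamma)/m$ by the same decomposition: the pieces involving $\gamma$ and $\log x$ contribute $-(\gamma+\log x)\log 2$ from $\sum(-1)^{m}/m=-\log 2$, while $\sum(-1)^{m}\log m/m=\eta'(1)=\gamma\log 2-\tfrac{1}{2}(\log 2)^{2}$ follows from $\eta(s)=(1-2^{1-s})\zeta(s)$ expanded about $s=1$. The remaining alternating series is handled by parity: with $a_{m}:=\psi(mx)-\log(mx)$, the identity $\sum_{m}(-1)^{m}a_{m}/m=\sum_{k}a_{2k}/k-\sum_{m}a_{m}/m$ together with the definition \eqref{herglotz defn ser} (applied at $x$ and at $2x$) gives exactly $F(2x)-F(x)$. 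Collecting matches the claimed right-hand side.

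\textbf{Part (2) and main obstacle.} Since $u=1$ lies outside the domain $|u|<1$ of the series, part (2) is treated instead from the integral \eqref{novikov function}: $\mathscr{F}(x;1,-1)=-\int_{0}^{1}\log(1-t^{x})/(1+t)\,dt$. The substitution $t=s^{2}$ in the definition \eqref{herglotz defn} of $F(x/2)$ and subtraction of $F(x)$ collapse to
\begin{align*}
F(x/2)-F(x)=\int_{0}^{1}\frac{\log(1-s^{x})}{s(1+s)}\,ds=\int_{0}^{1}\frac{\log(1-s^{x})}{s}\,ds-\int_{0}^{1}\frac{\log(1-s^{x})}{1+s}\,ds,
\end{align*}
and the elementary substitution $y=s^{x}$ yields $\int_{0}^{1}\log(1-s^{x})/s\,ds=-\pi^{2}/(6x)$. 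Rearranging gives the claim. The principal subtlety throughout is the exact cancellation of the divergent counter-terms $\log(1-u)$, $\mathrm{Li}_s'(u)|_{s=1}$ and $\log x\cdot\log(1-u)$ in the Abelian passage $u\to 1$ for part (1); once the decomposition above is in place the cancellation is algebraic, but justifying the successive limit interchanges (in $v$ inside the $n$-sum, and then in $u$ via Abel) is where the care lies.
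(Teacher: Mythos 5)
Your argument is correct in substance and reaches all three identities, but the three parts relate to the paper's proof in different ways. For part (1) your route is essentially the paper's: the paper also reduces everything to the double series and the digamma expansion $\psi(x)=\sum_{m\geq0}\bigl(\frac{1}{m+1}-\frac{1}{m+x}\bigr)-\gamma$; it isolates $\frac{1}{x}\mathrm{Li}_2(u)$ by writing $\frac{1}{v^{-1}-t}=\frac{1}{t}\bigl(\frac{1}{1-vt}-1\bigr)$ rather than by your partial-fraction split of $\frac{1}{m(mx+n)}$, but the cancellations against $\gamma\log(1-u)$, $\log(x)\log(1-u)$ and $\mathrm{Li}_s'(u)|_{s=1}$ are identical. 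For part (2), your computation on $[0,1]$ via $t=s^2$ and $\frac{2}{1-s^2}-\frac{1}{1-s}=\frac{1}{1+s}$ is a change-of-variables twin of the paper's proof, which works on $[0,\infty)$ via $\frac{1}{e^t+1}=\frac{1}{e^t-1}-\frac{2}{e^{2t}-1}$; yours has the mild advantage of never introducing the individually divergent $1/t$ counterterms.

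Part (3) is where you genuinely diverge from the paper, and where your one soft spot lies. The paper obtains (3) almost for free: setting $u=-1$ in the two-term functional equation \eqref{twoterm fe} turns the limit into $-\mathscr{F}(1/x;1,-1)$ with no limit interchange at all (the integral defining $\mathscr{F}(1/x;v,-1)$ is continuous at $v=1$), and then part (2) together with Zagier's reflection formula \eqref{fe2} for $F$ finishes the computation. You instead evaluate your series intermediate formula directly at $u=-1$. The arithmetic is right --- $\eta'(1)=\gamma\log 2-\frac{1}{2}\log^2 2$ and the parity splitting producing $F(2x)-F(x)$ both check out, and the constants assemble to the stated right-hand side --- but the justification you give for the $v\to1$ interchange, dominated convergence with envelope $|u|^m/[n(mx+n)]$, is only valid for $|u|<1$ and fails at $u=-1$: the envelope $\sum_{m,n}\frac{1}{n(mx+n)}$ diverges, and the resulting $m$-series $\sum_m\frac{(-1)^m}{m}(\psi(mx)+\gamma)$ is only conditionally convergent. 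This is patchable (Abel summation in $m$, or a uniform bound on the alternating tails), but as written the limit interchange in part (3) is not covered by the argument you state; the cleanest repair is simply to route part (3) through \eqref{twoterm fe} and your part (2), as the paper does.
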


We end this subsection with the following remark.
\begin{remark}
In a forthcoming paper \cite{choiekumar}, we explore  a higher version of $\mathscr{F}(x;u,v)$ with its application in obtaining a ``Higher Kronecker limit formula" for real quadratic fields, which  extends  
the results in \cite{vz}.
\end{remark}


\medskip

\subsection{Period relations and cohomology}\label{co}

The expressions on the left-hand side of the two, three and six term functional equations are not arbitrary; instead, they satisfy the following rule. Define an action of GL$_2(\mathbb{R}) $ on $  \mathbb{C}' \times ( {\mathbb{D}_1}' \times  {\mathbb{D}_1}' )$, where  $ {\mathbb{D}_1}'$  is given in \eqref{set}, as 
\begin{equation*}
\begin{pmatrix}
a&b\\
c&d
\end{pmatrix}
\circ
\left[x,(u,v)\right]:=\left[\frac{ax+b}{cx+d},\left(u^av^b, u^cv^d\right)\right].
\end{equation*} 
For a function $f: \mathbb{C}' \times  ( {\mathbb{D}_1}' \times  {\mathbb{D}_1}' )\rightarrow \mathbb{C}$ define
\begin{eqnarray} \label{act}
 \left( \large {f\big{|}} \sm a& b\\ c& d\esm\right )(x, (u, v))  := f\left(\frac{ax+b}{cx+d}, \left(u^av^b, u^cv^d\right)\right).
\end{eqnarray}
One checks that
$$ \large (f|M |N \large)(x, (u,v))=
\large (f|MN\large )(x, (u,v)).$$

Relations involving $F(x), F(x+1)$, and $F\left(\frac{x}{x+1}\right)$ occur in several areas of mathematics, including period functions for modular forms,  cotangent functions, and double zeta functions, among others (see for instance \cite{choiezagier}, \cite{lewiszagier}, \cite{raza}, \cite{vz}). Zagier \cite{zagier} gave a beautifully detailed explanation of this topic, and we next follow his notations. Consider the modular group  $\Gamma:=\mathrm{SL}_2(\mathbb{Z}) $ and its generators $S=\sm 0 & -1\\ 1 & 0 \esm, U=\sm 1& -1\\ 1 & 0\esm$  and relations are:
$$\Gamma =\large< S, U\,  : \, S^4=U^6= I\large >.$$
Furthermore, let
$$T=\sm 1 & 1\\ 0 & 1\esm=-US,\ T'=\sm 1 & 0 \\ 1 & 1\esm  =U^2S=TST.$$
 
Let $\mathbb{Z}[\Gamma]$ be group ring of $\Gamma$ over   $\mathbb{Z}$ and let
 $V$ be a right $\mathbb{Z}[\Gamma]$-module.  
 The cohomology group (see \cite{B})
$$H^1(\Gamma, V)=Z^1(\Gamma,V)/ B^1(\Gamma, V)$$ where
$$Z^1(\Gamma,V)=\{f:\Gamma\rightarrow V :\, f_{\gamma_1\gamma_2}=f_{\gamma_1}|\gamma_2, \forall \gamma\in \Gamma\},$$
and
$$B^1(\Gamma, V):=\{f:\Gamma\rightarrow V :\,  f_{\gamma}=v_0|\gamma-v_0, \mbox{for some $v_0\in V$}\}.$$ 
 
Now take $V$ as the space of functions on $\mathbb{C}'\times ( {\mathbb{D}_1}' \times  {\mathbb{D}_1}')$ with the action of $\Gamma$ given by
$\gamma F=F|\gamma$ in (\ref{act}).
Then a $1$-cocycle $\phi$ is the same as a pair of functions $F=f_S$ and $G=f_{U}$ satisfying
\begin{eqnarray}\label{period}
& F|\Large{(}(\pm I)+(\pm S) \Large{)}=0 \nonumber \\
& G|\Large{(}(\pm I)+(\pm U)+(\pm U^2) \Large{)} =0.
\end{eqnarray}
 Since $U=TS$ so we have
$G=f_{T}|S+f_S=f_T|S + F$. So we may rewrite the relation of  $G$ in  (\ref{period}) 
as
\begin{eqnarray*}
&& F(x; u,v)+F\left(x; u^{-1},v^{-1}\right)-F(x+1;uv, v)-F\left(x+1;(uv)^{-1}, v^{-1}\right)\\
&& -F\left(\frac{x}{x+1}; u, uv\right)-F\left(\frac{x}{x+1}; u^{-1}, (uv)^{-1}\right)
=B(x; u,v), 
\end{eqnarray*}
where
$$B(x;u,v):=-f_T|S|\large{(}(\pm I)+(\pm U) +(\pm U^2) \Large{)}|S.$$





\medskip

\subsection{Special values of $\mathscr{F}\left(x;u,v\right)$ at rationals}

Our first result in this section provides the explicit evaluation of $\mathscr{F}\left(x;u,v\right)$, $x\in\mathbb{Q}$.
\begin{theorem}\label{f eval at rationals}
Let $(u,v)\in\mathbb{D}\times \mathbb{D}'$ and $p,q\in\mathbb{N}$. Function $\mathscr{F}\left(\frac{p}{q};u,v\right)$ can be evaluated as
\begin{align}\label{at rationals}
\mathscr{F}\left(\frac{p}{q};u,v\right)=\frac{q}{p}\mathrm{Li}_2(u)+\sum_{\alpha^p=1}\sum_{\beta^q=1}\left\{\mathrm{Li}_2\left(\frac{\beta v^{1/q}}{\beta v^{1/q}-1}\right)-\mathrm{Li}_2\left(\frac{\alpha u^{1/p}-\beta v^{1/q}}{1-\beta v^{1/q}}\right)\right\}.
\end{align}
\end{theorem}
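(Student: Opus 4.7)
My plan is to convert the defining integral of $\mathscr{F}(p/q;u,v)$ into a finite sum of one-variable dilogarithm integrals, evaluate each in closed form, and then recombine using an elementary root-of-unity identity for the dilogarithm.

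First, the substitution $t=s^q$ in \eqref{novikov function} gives
\begin{align*}
\mathscr{F}\!\left(\frac{p}{q};u,v\right)=\int_0^1\log(1-us^p)\,\frac{qs^{q-1}}{v^{-1}-s^q}\,ds.
\end{align*}
I will factor $\log(1-us^p)=\sum_{\alpha^p=1}\log(1-\alpha u^{1/p}s)$ and, using the logarithmic derivative of $1-vs^q=\prod_{\beta^q=1}(1-\beta v^{1/q}s)$, expand
\begin{align*}
\frac{qs^{q-1}}{v^{-1}-s^q}=\frac{qvs^{q-1}}{1-vs^q}=\sum_{\beta^q=1}\frac{\beta v^{1/q}}{1-\beta v^{1/q}s}.
\end{align*}
Interchanging sums and integral then rewrites the left-hand side of \eqref{at rationals} as
\begin{align*}
\mathscr{F}\!\left(\frac{p}{q};u,v\right)=\sum_{\alpha^p=1}\sum_{\beta^q=1}I(a,b),\qquad I(a,b):=\int_0^1\frac{a\log(1-bs)}{1-as}\,ds,
\end{align*}
with shorthand $a=\beta v^{1/q}$ and $b=\alpha u^{1/p}$; the result is independent of the chosen $p$-th and $q$-th root branches, since $\alpha,\beta$ traverse the full root-of-unity sets.

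The main technical step is the closed-form evaluation
\begin{align*}
I(a,b)=\mathrm{Li}_2(b)+\mathrm{Li}_2\!\left(\frac{a}{a-1}\right)-\mathrm{Li}_2\!\left(\frac{b-a}{1-a}\right).
\end{align*}
The cleanest verification is by differentiation in $b$. A single partial-fraction step on the left yields
\begin{align*}
\frac{\partial I}{\partial b}=-\int_0^1\frac{as}{(1-as)(1-bs)}\,ds=\frac{b\log(1-a)-a\log(1-b)}{b(a-b)},
\end{align*}
and direct differentiation of the right-hand side (only the first and third terms depend on $b$) produces the same expression, while both sides vanish at $b=0$. Equivalently, the identity is a specialization of Abel's five-term relation combined with Landen's transformation $\mathrm{Li}_2(a/(a-1))=-\mathrm{Li}_2(a)-\tfrac{1}{2}\log^2(1-a)$, but the direct verification is shorter. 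I expect this dilogarithm evaluation to be the main technical content of the argument; the rest is book-keeping.

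Finally, I substitute the closed form into the double sum. The $\mathrm{Li}_2(a/(a-1))$- and $\mathrm{Li}_2((b-a)/(1-a))$-terms reproduce verbatim the two summands on the right of \eqref{at rationals}. The remaining $\mathrm{Li}_2(b)$-contribution is independent of $\beta$, so summing over $\beta^q=1$ multiplies it by $q$, and the $\alpha$-sum collapses via the identity
\begin{align*}
\sum_{\alpha^p=1}\mathrm{Li}_2(\alpha z)=\frac{1}{p}\mathrm{Li}_2(z^p),
\end{align*}
which follows from the orthogonality $\sum_{\alpha^p=1}\alpha^n=p$ if $p\mid n$ and $0$ otherwise, applied term by term to the power series for $\mathrm{Li}_2$. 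Taking $z=u^{1/p}$ produces exactly the coefficient $\frac{q}{p}\mathrm{Li}_2(u)$ of \eqref{at rationals}, completing the identification.
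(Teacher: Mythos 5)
Your proof is correct and follows essentially the same route as the paper: reduce to $\int_0^1\log(1-ut^p)\,\frac{q}{1-vt^q}\,\frac{dt}{t}$ by the substitution $t\mapsto t^q$, factor both the logarithm and the rational kernel over $p$-th and $q$-th roots of unity, and evaluate the resulting one-variable integral in closed dilogarithmic form by differentiating under the integral sign (your $I(a,b)$ is the paper's Lemma on $\int_0^1\frac{\log(1-\alpha t)}{t(1-\beta t)}\,dt$ up to the additive term $\mathrm{Li}_2(b)$). The only, harmless, differences are that you recover the $\frac{q}{p}\mathrm{Li}_2(u)$ term at the end via the distribution relation $\sum_{\alpha^p=1}\mathrm{Li}_2(\alpha z)=\frac{1}{p}\mathrm{Li}_2(z^p)$ rather than splitting it off at the outset, and that you fix the constant of integration at $b=0$, which is slightly cleaner than the paper's evaluation at $\alpha=1$ using Euler's reflection formula.
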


Theorem \ref{f eval at rationals} gives the following two evaluations of $F(x;u,v)$: one with $p=n$ and $q=1$, and the other with $p=1$, $q=n$ and $u=1$.
\begin{corollary}\label{novikov funct eval}
For $n\in\mathbb{N}$ and $u,v\in\mathbb{D},\ v\neq1$, we have
\begin{align}\label{novikov funct eval at n}
\mathscr{F}(n;u,v)=n\ \mathrm{Li}_2\left(\frac{v}{v-1}\right)+\frac{1}{n}\mathrm{Li}_2(u)-\sum_{j=1}^n\mathrm{Li}_2\left(\frac{u^{1/n}e^{\frac{2\pi ij}{n}}-v}{1-v}\right),
\end{align}
and 
\begin{align}\label{novikov funct at u=1}
\mathscr{F}\left(\frac{1}{n};1,v\right)=-\frac{1}{n}\mathrm{Li}_2(v)-\frac{1}{2}\sum_{j=1}^n\log^2\left(1-v^{1/n}e^{\frac{2\pi ij}{n}}\right).
\end{align}
\end{corollary}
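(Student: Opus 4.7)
My plan is to derive both evaluations as direct specializations of Theorem \ref{f eval at rationals}, which I am allowed to assume.

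For part (1), I would set $p=n$ and $q=1$ in \eqref{at rationals}. Then the inner sum over $\beta$ with $\beta^1=1$ collapses to the single term $\beta=1$, so $v^{1/q}=v$, while the sum over $\alpha$ with $\alpha^n=1$ becomes $\sum_{j=1}^n$ with $\alpha = e^{2\pi i j/n}$. The first dilogarithm term $\mathrm{Li}_2(v/(v-1))$ no longer depends on $\alpha$, so summing it over the $n$ roots of unity produces the factor $n$, and the prefactor $q/p = 1/n$ matches. After relabeling, this is exactly \eqref{novikov funct eval at n}.

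For part (2), I would set $p=1$, $q=n$, and $u=1$. Now $\alpha=1$, so $\alpha u^{1/p}=1$, and the second dilogarithm becomes $\mathrm{Li}_2\!\left((1-\beta v^{1/n})/(1-\beta v^{1/n})\right)=\mathrm{Li}_2(1)=\pi^2/6$. The leading term $(q/p)\mathrm{Li}_2(u) = n \cdot \pi^2/6$ cancels exactly the $n$ copies of $\mathrm{Li}_2(1)$ coming from the sum over $\beta$. What remains is
\begin{equation*}
\mathscr{F}\!\left(\tfrac{1}{n};1,v\right) = \sum_{j=1}^n \mathrm{Li}_2\!\left(\frac{v^{1/n}\,e^{2\pi i j/n}}{v^{1/n}\,e^{2\pi i j/n} - 1}\right).
\end{equation*}

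At this point the main technical step is to apply Landen's transformation $\mathrm{Li}_2(z/(z-1)) = -\mathrm{Li}_2(z) - \tfrac{1}{2}\log^2(1-z)$ with $z=v^{1/n}e^{2\pi i j/n}$ to each summand, which produces the desired $-\tfrac{1}{2}\sum_{j=1}^n\log^2(1-v^{1/n}e^{2\pi i j/n})$ piece and a leftover $-\sum_{j=1}^n\mathrm{Li}_2(v^{1/n}e^{2\pi i j/n})$. To reduce the latter to $\tfrac{1}{n}\mathrm{Li}_2(v)$, I would expand each dilogarithm as a power series, interchange summation, and invoke the orthogonality relation $\sum_{j=1}^n e^{2\pi i j k/n} = n$ if $n\mid k$ and $0$ otherwise; only the multiples $k=nm$ survive, giving $\sum_m v^m/(nm^2) \cdot n /n = \tfrac{1}{n}\mathrm{Li}_2(v)$.

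I do not expect a serious obstacle: the content is essentially bookkeeping plus the two standard dilogarithm facts (Landen's transformation and roots-of-unity orthogonality). The only delicate point is the $u\to 1$ limit in part (2), where care is needed to see that the apparently singular $\mathrm{Li}_2(u)/p$ term and the constant dilogarithms coming from the vanishing numerator $\alpha u^{1/p}-\beta v^{1/q}\big|_{u=1,\alpha=1,\beta=1}\ne 0$ for $\beta\ne 1$ work out consistently; writing the sum as $n\,\mathrm{Li}_2(1)$ minus $n\,\mathrm{Li}_2(1)$ before applying Landen's formula keeps the bookkeeping transparent.
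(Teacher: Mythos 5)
Your proposal is correct and follows essentially the same route as the paper: both parts are obtained by specializing Theorem \ref{f eval at rationals} exactly as you describe, and part (2) proceeds via the same identity $\mathrm{Li}_2\left(\frac{z}{z-1}\right)+\mathrm{Li}_2(z)=-\tfrac{1}{2}\log^2(1-z)$ followed by the dilogarithm distribution relation $\sum_{\beta^n=1}\mathrm{Li}_2(\beta z)=\tfrac{1}{n}\mathrm{Li}_2(z^n)$. The only cosmetic difference is that the paper verifies this distribution relation through the integral representation of $\mathrm{Li}_2$ together with the factorization $\log(1-y^n)=\sum_{\beta^n=1}\log(1-\beta y)$, whereas you use termwise series expansion and roots-of-unity orthogonality; the two verifications are interchangeable.
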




The two-term functional equation \eqref{fe2} implies that $\mathscr{F}\left(x;u,v\right)+\mathscr{F}\left( \frac{1}{x};v,u\right)$ is just the product of logarithms. Our next result shows that certain other combinations of $\mathscr{F}\left(x;u,v\right)$ can also be given in terms of logarithms only.
\begin{corollary}\label{combinations of novikov}
Let $n\in\mathbb{N}$ and $v\in\mathbb{D}'$. 
\begin{enumerate}
\item 
\begin{eqnarray*}\label{combinations of novikov 1}
&&\mathscr{F}\left(\frac{1}{n};1,v\right)+\mathscr{F}\left(\frac{1}{n};1,\frac{v}{v-1}\right) =\frac{1}{2n}\log^2(1-v)\\
&&-\frac{1}{2}\sum_{j=1}^n
\left(
\log^2\left(1-v^{1/n}e^{\frac{2\pi ij}{n}}\right)+\log^2\left(1-\left(\frac{v}{v-1}\right)^{1/n}e^{\frac{2\pi ij}{n}}\right)\right).
\end{eqnarray*}
\item Let   $0<\mathrm{Re}(v)<1$. Then,  
\begin{enumerate}
\item 
\begin{eqnarray}\label{combinations of novikov 2}
&&\mathscr{F}\left(\frac{1}{n};1,v\right)+\mathscr{F}\left(\frac{1}{n};1,1-v\right) =\frac{1}{n}\log(1-v)\log(v)-\frac{\pi^2}{6n}\nonumber\\
&&-\frac{1}{2}\sum_{j=1}^n\left(
\log^2\left(1-v^{1/n}e^{\frac{2\pi ij}{n}}\right)
 +\log^2\left(1-(1-v)^{1/n}e^{\frac{2\pi ij}{n}}\right)\right).
\end{eqnarray}
\item
\begin{eqnarray*}\label{combinations of novikov 4}
 &&\mathscr{F}\left(\frac{1}{n};1,\frac{v}{v-1}\right)-\mathscr{F}\left(\frac{1}{n};1,1-v\right) =\frac{\pi^2}{6n}+\frac{1}{2n}\log^2(1-v)-\frac{1}{n}\log(1-v)\log(v)\nonumber\\
&& +\frac{1}{2}\sum_{j=1}^n \left(
\log^2\left(1-(1-v)^{1/n}e^{\frac{2\pi ij}{n}}\right) - \log^2\left(1-\left(\frac{v}{v-1}\right)^{1/n}e^{\frac{2\pi ij}{n}}\right)\right).
\end{eqnarray*}
\end{enumerate}
\item For $\mathrm{Re}(v)<0$, we have
\begin{enumerate}
\item 
\begin{eqnarray*}\label{combinations of novikov 3}
&&\mathscr{F}\left(\frac{1}{n};1,v\right)-\mathscr{F}\left(\frac{1}{n};1,\frac{1}{1-v}\right)=\frac{\pi^2}{6n}-\frac{1}{2n}\log(1-v)\log\left(\frac{1-v}{v^2}\right)\nonumber\\
&&-\frac{1}{2}\sum_{j=1}^n \left(
\log^2\left(1-v^{1/n}e^{\frac{2\pi ij}{n}}\right)- \log^2\left(1-(1-v)^{-1/n}e^{\frac{2\pi ij}{n}}\right)\right).
\end{eqnarray*}

\item \begin{eqnarray} \label{combinations of novikov 5}
&&\mathscr{F}\left(\frac{1}{n};1,1-v\right)+\mathscr{F}\left(\frac{1}{n};1,\frac{1}{1-v}\right)\nonumber
=-\frac{\pi^2}{3n}+\frac{1}{2n}\log(1-v)\log\left(\frac{1-v}{v^2}\right)\nonumber\\
&& +\frac{1}{n}\log(1-v)\log(v) 
-\frac{1}{2}\sum_{j=1}^n\left(\log^2\left(1-(1-v)^{1/n}e^{\frac{2\pi ij}{n}}\right)-
\log^2\left(1-(1-v)^{-1/n}e^{\frac{2\pi ij}{n}}\right)\right).\nonumber
\end{eqnarray}\end{enumerate}
\end{enumerate}
\end{corollary}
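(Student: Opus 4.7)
The plan is to prove all five identities by a single strategy: apply the closed-form evaluation \eqref{novikov funct at u=1} to each instance of $\mathscr{F}(1/n;1,w)$ on the left-hand side, where $w$ ranges over $v$, $v/(v-1)$, $1-v$, and $1/(1-v)$. The $-\frac{1}{2}\sum_{j=1}^n\log^2\bigl(1-w^{1/n}e^{2\pi ij/n}\bigr)$ piece contributed by each substitution carries over verbatim to the matching log-squared sum on the right-hand side, so each identity reduces to a scalar relation, multiplied by $-1/n$, among the dilogarithm values $\mathrm{Li}_2(v)$, $\mathrm{Li}_2(v/(v-1))$, $\mathrm{Li}_2(1-v)$, and $\mathrm{Li}_2(1/(1-v))$.

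These scalar relations are obtained from the three classical dilogarithm identities:
\begin{align*}
\mathrm{Li}_2(v)+\mathrm{Li}_2\!\left(\tfrac{v}{v-1}\right) &= -\tfrac{1}{2}\log^2(1-v) \qquad (\textrm{Landen}),\\
\mathrm{Li}_2(v)+\mathrm{Li}_2(1-v) &= \tfrac{\pi^2}{6}-\log(v)\log(1-v) \qquad (\textrm{Euler}),\\
\mathrm{Li}_2(z)+\mathrm{Li}_2(1/z) &= -\tfrac{\pi^2}{6}-\tfrac{1}{2}\log^2(-z) \qquad (\textrm{inversion}).
\end{align*}
Part (1) is Landen multiplied by $-1/n$, and Part (2)(a) is Euler multiplied by $-1/n$. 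Part (2)(b) is obtained by solving Landen for $\mathrm{Li}_2(v/(v-1))$ and Euler for $\mathrm{Li}_2(1-v)$ in terms of $\mathrm{Li}_2(v)$, and then taking the difference so that $\mathrm{Li}_2(v)$ cancels. Part (3)(b) follows from inversion alone, applied with $z=1-v$, which gives $\mathrm{Li}_2(1-v)+\mathrm{Li}_2(1/(1-v)) = -\pi^2/6 - \tfrac{1}{2}\log^2(v-1)$. Part (3)(a) combines inversion (to express $\mathrm{Li}_2(1/(1-v))$ in terms of $\mathrm{Li}_2(1-v)$) with Euler (to replace $\mathrm{Li}_2(1-v)$ by $\mathrm{Li}_2(v)$); the combination $\mathrm{Li}_2(v)-\mathrm{Li}_2(1/(1-v))$, multiplied by $-1/n$, then yields the asserted right-hand side.

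The main obstacle is branch bookkeeping. The domain constraints $0<\mathrm{Re}(v)<1$ in Part (2) and $\mathrm{Re}(v)<0$ in Part (3) are imposed precisely so that (i) each transformed argument $v/(v-1)$, $1-v$, $1/(1-v)$ lies in the domain of $\mathscr{F}(1/n;1,\cdot)$ allowing a consistent use of \eqref{novikov funct at u=1} (after analytic continuation where necessary), and (ii) the principal-branch relation $\log(v-1)=\log(1-v)+i\pi$ gives the expansion $\log^2(v-1)=\log^2(1-v)+2\pi i\log(1-v)-\pi^2$, whose imaginary contribution $2\pi i\log(1-v)$ cancels against the imaginary part of the $\log(v)\log(1-v)$ term produced by Euler (since $\log(v)$ carries an imaginary part $\pm i\pi$ when $\mathrm{Re}(v)<0$). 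After this branch bookkeeping, the apparent mismatches collapse into the logarithmic combinations $\log(1-v)\log\bigl((1-v)/v^2\bigr)$ and the other expressions displayed in the corollary, and each identity becomes a direct algebraic verification.
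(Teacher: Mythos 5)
Your proposal is correct and follows essentially the same route as the paper: apply the evaluation \eqref{novikov funct at u=1} to each $\mathscr{F}(1/n;1,\cdot)$ on the left and reduce everything to classical dilogarithm identities (Landen \eqref{polylog x and x-1} for part (1), Euler \eqref{euler} for part (2)(a), and linear combinations for the rest). The only cosmetic difference is that for the parts involving $1/(1-v)$ the paper directly cites the identity $\mathrm{Li}_2(z)-\mathrm{Li}_2\left(\frac{1}{1-z}\right)=\frac{1}{2}\log(1-z)\log\left(\frac{1-z}{z^2}\right)-\frac{\pi^2}{6}$, whereas you reconstruct it from the inversion formula together with Euler's relation and explicit branch bookkeeping.
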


The special case $v=1/2$ in equation \eqref{combinations of novikov 2} gives:
\begin{corollary}\label{elementary integral}
For $n\in\mathbb{N}$, we have
\begin{align}
\mathscr{F}\left(\frac{1}{n};1,\frac{1}{2}\right)=\int_0^1\frac{\log\left(1-t^{1/n}\right)}{2-t}dt=\frac{1}{2n}\log^22-\frac{\pi^2}{12n}-\frac{1}{2}\sum_{j=1}^n\log^2\left(1-2^{-1/n}e^{\frac{2\pi ij}{n}}\right).\nonumber
\end{align}
\end{corollary}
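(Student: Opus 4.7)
The plan is to obtain the stated formula as a direct specialization of part (2)(a) of Corollary \ref{combinations of novikov}, namely the identity \eqref{combinations of novikov 2}, at the self-symmetric point $v = 1/2$. Since $1/2$ lies in the strip $0 < \mathrm{Re}(v) < 1$, that corollary applies.

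First I would observe that at $v = 1/2$ one has $1 - v = 1/2$ as well, so the two terms on the left-hand side of \eqref{combinations of novikov 2} coalesce, producing $2\,\mathscr{F}\!\left(\tfrac{1}{n};1,\tfrac{1}{2}\right)$. On the right-hand side, $\log(1-v)\log(v) = \log^2(1/2) = \log^2 2$, and the two sums over $j$ inside the braces likewise coincide (since $v^{1/n} = (1-v)^{1/n} = 2^{-1/n}$), doubling up to give $\sum_{j=1}^n \log^2\!\bigl(1 - 2^{-1/n} e^{2\pi i j/n}\bigr)$. Collecting everything and dividing by $2$ yields the closed-form expression on the right of the corollary statement.

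Second, the integral representation on the left-hand side is simply the definition \eqref{novikov function} specialized to $x = 1/n$, $u = 1$, $v = 1/2$: the factor $v^{-1} - t$ becomes $2 - t$, and the numerator becomes $\log(1 - t^{1/n})$. One has to check that setting $u = 1$ is legitimate here; since the integrand $\log(1-t^{1/n})/(2-t)$ is integrable on $[0,1]$ (the only issue is the integrable logarithmic singularity at $t = 1$), this follows from dominated convergence as $u \to 1^{-}$ along the reals, matching the limiting value of $\mathscr{F}(1/n;u,1/2)$ obtained from the series representation of Proposition \ref{analytic continuation 1}.

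There is essentially no obstacle: the argument is a one-line substitution once Corollary \ref{combinations of novikov}(2)(a) is available, together with verification of the integral form from the defining formula. The only minor bookkeeping is to confirm the halving of the sum (the factor $\tfrac{1}{2}$ rather than $1$ in front) and the halving of the constants $\tfrac{1}{n}\log^2 2$ and $\tfrac{\pi^2}{6n}$ to $\tfrac{1}{2n}\log^2 2$ and $\tfrac{\pi^2}{12n}$, respectively, which arises from dividing through by $2$ after the coalescence on the left-hand side.
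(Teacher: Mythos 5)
Your proposal is correct and is precisely the paper's own derivation: the authors state that Corollary \ref{elementary integral} is "the special case $v=1/2$ in equation \eqref{combinations of novikov 2}," which is exactly the coalescence-and-divide-by-two argument you give. Your additional check that $u=1$ is legitimate is harmless but unnecessary, since the definition \eqref{novikov function} already permits $u=1$ (only the open ray $(1,\infty)$ is excluded) and the logarithmic singularity at $t=1$ is integrable.
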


\section{Properties of  the function $J(x)$}\label{jx section}
Let $J(x)$ as defined in \eqref{j(x)}. Note that, from \eqref{novikov function}, as mentioned in the Introduction,
\begin{align}\label{j as special case}
 J(x)=-\mathscr{F}\left(x; -1, -1\right).
\end{align}

 
\subsection{Two-term functional equation  of  $J(x)$ }
As a special case of Theorem \ref{functional equations}, we see that $J(x)$ satisfies the following functional equation. 
\begin{theorem}\label{fe ja}
For $\mathrm{Re}(x)>0$ we have
\begin{align}\label{fe ja eqn}
J(x)+J\left(\frac{1}{x}\right)=\log^2(2).
\end{align}
\end{theorem}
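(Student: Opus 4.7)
The plan is to obtain Theorem~\ref{fe ja} as a direct specialization of the two-term functional equation \eqref{twoterm fe} combined with the identification $J(x) = -\mathscr{F}(x;-1,-1)$ from \eqref{j as special case}.

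First, I would verify that the pair $(u,v) = (-1,-1)$ lies in the allowed domain of \eqref{twoterm fe}. Since $|-1| = 1$ and $-1 \neq 1$, we have $-1 \in \mathbb{D}_1' \subset \mathbb{D}'$, so the two-term functional equation applies. Substituting $u = v = -1$ into \eqref{twoterm fe} yields
\begin{align*}
\mathscr{F}(x;-1,-1) + \mathscr{F}\!\left(\tfrac{1}{x};-1,-1\right) = -\log(2)\,\log(2) = -\log^2(2).
\end{align*}

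Next, I would apply \eqref{j as special case} to both terms on the left, which converts the relation into
\begin{align*}
-J(x) - J\!\left(\tfrac{1}{x}\right) = -\log^2(2),
\end{align*}
and flipping signs gives precisely \eqref{fe ja eqn}.

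There is essentially no obstacle here, since the reduction is a one-line substitution. The only point requiring a brief verification is the domain check that $(-1,-1) \in \mathbb{D}' \times \mathbb{D}'$ so that \eqref{twoterm fe} is legitimately applicable at this boundary point of the unit disk, which we have confirmed above. No analytic continuation argument beyond what is already built into \eqref{twoterm fe} is required.
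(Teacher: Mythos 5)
Your proof is correct and is essentially identical to the paper's own argument, which likewise obtains \eqref{fe ja eqn} by setting $u=v=-1$ in the two-term functional equation \eqref{twoterm fe} and invoking \eqref{j as special case}. The domain check you include is a nice touch but does not change the substance.
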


\subsection{ Relation between $J(x)$ and $F(x)$}
We have the following relations. The first one is an analogue of \eqref{jxfx} and the other one can be considered as a ``three-term" functional equation for $J(x)$.

\begin{theorem}\label{3term thm}
For  $\mathrm{Re}(x)>0$, we have
\begin{align}
&(1)\ J(x)=F(x )-F\left(\frac{x-1}{2}\right)+F\left(\frac{x-1}{2x}\right)+\frac{\pi^2(2x+1)}{12x}+\frac{\gamma^2}{2}+\gamma_1+\frac{\log^2(2)}{2}+\mathrm{Li}_2\left(\frac{1}{x}\right).\label{jx repres}\\
&(2)\quad J(x)-J\left(x-1\right)+J\left(\frac{x}{x-1}\right)\nonumber\\
&\quad\quad=F\left(\frac{x}{2}\right)+F\left(\frac{x-1}{x}\right)-F\left(\frac{x-1}{2}\right)+2\mathrm{Li}_2\left(\frac{1}{x}\right)+\mathrm{Li}_2\left(\frac{x-1}{x}\right)-\frac{\pi^2(x^2+1)}{12x(x-1)}\nonumber\\
&\quad\quad\quad+\frac{1}{2}\gamma^2+\gamma_1+\frac{1}{2}\left(\log^2(2)+\log^2(2x)+\log^2\left(\frac{x}{x-1}\right)-\log^2(2(x-1))\right).\label{3tem fe for j(x)1 eqn}
\end{align}
Equivalently,
\begin{align}\label{3tem fe for j(x)2 eqn}
&J(x)-J\left(x-1\right)+J\left(\frac{x}{x-1}\right)\nonumber\\
&=\mathscr{F}(x;1,-1)-\mathscr{F}(x-1;1,-1)+\mathrm{Li}_2\left(\frac{1}{x}\right)+\mathrm{Li}_2\left(\frac{x-1}{x}\right)-\frac{\pi^2(2x+1)}{12x}\nonumber\\
&\qquad+\frac{1}{2}\left(\log^2(2)+\log^2(2x)+\log^2\left(\frac{x}{x-1}\right)-\log^2(2(x-1))\right).
\end{align}
 
\end{theorem}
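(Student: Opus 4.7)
Part (1). The plan is to apply the general form \eqref{fe general} of the three-term functional equation for $\mathscr{F}$ with $(u,v)=(1,-1)$ at the shifted argument $y=x-1$; since $uv=-1$ and $J(x)=-\mathscr{F}(x;-1,-1)$ by \eqref{j as special case}, rearranging yields
\begin{align*}
J(x)=-\mathscr{F}(x-1;1,-1)+\mathscr{F}\!\left(\tfrac{x-1}{x};1,-1\right)-\left(\tfrac12-\tfrac1x\right)\tfrac{\pi^2}{12}+C_0,
\end{align*}
where $C_0:=-\mathscr{F}(2;-1,-1)+\mathscr{F}(1;1,-1)-\mathscr{F}(\tfrac12;1,-1)$ is independent of $x$. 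Proposition \ref{connection $F$}(2) then turns each $\mathscr{F}(\cdot;1,-1)$ into $F$-values plus a rational multiple of $\pi^2$, and Zagier's three-term relation \eqref{fe1} evaluated at $y=x-1$ replaces $F(x-1)-F((x-1)/x)$ by $F(x)+\tfrac12\gamma^2+\tfrac{\pi^2}{12}+\gamma_1+\mathrm{Li}_2(1/x)$, producing the $F$- and $\mathrm{Li}_2$-parts of the claim. It remains to evaluate $C_0$, which Corollary \ref{novikov funct eval} reduces to a closed-form combination of $\mathrm{Li}_2(1/2)$ and the dilogarithm pair $\mathrm{Li}_2((1\pm i)/2)$; the reflection formula $\mathrm{Li}_2(z)+\mathrm{Li}_2(1-z)=\pi^2/6-\log(z)\log(1-z)$ applied at $z=(1+i)/2$ gives $C_0=-\tfrac{\pi^2}{24}+\tfrac12\log^2 2$, and collecting constants yields the stated identity.

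For Part (2), the two statements \eqref{3tem fe for j(x)1 eqn} and \eqref{3tem fe for j(x)2 eqn} are equivalent under Proposition \ref{connection $F$}(2) applied to $\mathscr{F}(x;1,-1)-\mathscr{F}(x-1;1,-1)$, so it suffices to establish \eqref{3tem fe for j(x)1 eqn}. Substituting the formula of Part (1) for $J(y)$ at $y=x$, $y=x-1$, and $y=x/(x-1)$ produces an identity involving nine $F$-values together with rational $\pi^2$- and $\mathrm{Li}_2$-contributions. The key reductions are: (i) the two-term relation \eqref{fe2} applied to the involutive pair $x/(x-1)\leftrightarrow(x-1)/x$ eliminates $F(x/(x-1))$ in favour of $F((x-1)/x)$ and contributes $\tfrac12\log^2(x/(x-1))-\pi^2/(6x(x-1))$; (ii) Zagier's three-term relation \eqref{fe1} at $y=x-1$ reduces the combination $F(x)-F(x-1)-F((x-1)/x)$ to elementary terms; (iii) further applications of \eqref{fe1} at shifts such as $y=(x-2)/2$ and $y=1/(2(x-1))$ collapse the remaining half-integer-scale $F$-values and generate the $\log^2(2x)$ and $\log^2(2(x-1))$ contributions. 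What survives from the nine $F$-values is precisely the three-term combination $F(x/2)+F((x-1)/x)-F((x-1)/2)$ appearing on the right of \eqref{3tem fe for j(x)1 eqn}.

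The main obstacle is the bookkeeping in Part (2): six of the nine $F$-terms must conspire to cancel and the residual logarithmic pieces must assemble into the symmetric combination $\tfrac12(\log^2 2+\log^2(2x)+\log^2(x/(x-1))-\log^2(2(x-1)))$ on the right-hand side. Choosing the correct shifts in \eqref{fe1} and \eqref{fe2} and carefully tracking the many rational $\pi^2$-terms, the $\gamma^2/2+\gamma_1$ contributions (appearing three times from Part (1) and once per use of \eqref{fe1} or \eqref{fe2}), and the dilogarithm Abel-type identities needed to produce $2\mathrm{Li}_2(1/x)+\mathrm{Li}_2((x-1)/x)$, is the crux of the calculation. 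Part (1), by contrast, is a single invocation of \eqref{fe general} followed by one shift in \eqref{fe1}, with the only subtlety being the explicit closed-form evaluation of $C_0$ via Corollary \ref{novikov funct eval}.
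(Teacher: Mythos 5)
Your Part (1) is correct and is essentially the paper's own proof: specialize the three-term relation \eqref{general fe} to $u=v=-1$, convert the two $\mathscr{F}(\cdot\,;1,-1)$ terms into $F$-values via Proposition \ref{connection $F$}(2), apply \eqref{fe1} at $x-1$ to trade $F(x-1)-F\bigl(\tfrac{x-1}{x}\bigr)$ for $F(x)$, and evaluate the constant. Your value $C_0=-\tfrac{\pi^2}{24}+\tfrac12\log^2 2$ agrees with the paper's evaluations $\mathscr{F}(1;1,-1)=\tfrac{\pi^2}{12}-\tfrac12\log^2 2$, $\mathscr{F}(2;-1,-1)=\tfrac{1}{48}(\pi^2-36\log^2 2)$, $\mathscr{F}(\tfrac12;1,-1)=\tfrac{5\pi^2}{48}-\tfrac14\log^2 2$.

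Part (2) as you describe it does not close. The paper does \emph{not} substitute Part (1) three times: it uses the Radchenko--Zagier relation \eqref{jxfx}, $J(y)=F(2y)-2F(y)+F(y/2)+\tfrac{\pi^2}{12y}$, for $J(x)$ and $J(x-1)$, and Part (1) only for $J\bigl(\tfrac{x}{x-1}\bigr)$; this choice is essential, not cosmetic. With your all-Part-(1) substitution the arguments of the nine $F$-values split into two $\mathrm{PGL}_2(\mathbb{Z})$-orbits: $x$, $x-1$, $\tfrac{x}{x-1}$ (and the target $\tfrac{x-1}{x}$) lie in the orbit of $x$, while $\tfrac{x-1}{2}$, $\tfrac{x-1}{2x}$, $\tfrac{x-2}{2}$, $\tfrac{x-2}{2(x-1)}$, $\tfrac{1}{2(x-1)}$, $\tfrac{1}{2x}$ and the target $\tfrac{x}{2}$ lie in the determinant-$2$ orbit. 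Since \eqref{fe1} and \eqref{fe2} only relate values within a single orbit, the orbit-of-$x$ part of your identity would have to reduce to an elementary function on its own; but \eqref{fe2} gives $F(\tfrac{x}{x-1})=-F(\tfrac{x-1}{x})+(\text{elem.})$ and \eqref{fe1} at $x-1$ gives $F(x)-F(x-1)=-F(\tfrac{x-1}{x})+(\text{elem.})$, so $F(x)-F(x-1)+F(\tfrac{x}{x-1})-F(\tfrac{x-1}{x})=-3F(\tfrac{x-1}{x})+(\text{elem.})$, which is not elementary. The missing $3F(\tfrac{x-1}{x})$ must be imported from the determinant-$2$ orbit, and that requires a Hecke-type relation such as \eqref{jxfx} (equivalently, Proposition \ref{connection $F$}(2) combined with Theorem \ref{fe ja}) --- precisely the ingredient your steps (i)--(iii) omit; no list of "further applications of \eqref{fe1}" at rescaled arguments can supply it. The remedy is to adopt the paper's substitution, after which the determinant-$2$ terms pair off under \eqref{fe2} as $F(2x)+F(\tfrac{1}{2x})$ and $F(2(x-1))+F(\tfrac{1}{2(x-1)})$ and the rest is one instance each of \eqref{fe1} and \eqref{fe2}. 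Note also that the asserted equivalence of \eqref{3tem fe for j(x)1 eqn} and \eqref{3tem fe for j(x)2 eqn} needs \eqref{fe1} in addition to Proposition \ref{connection $F$}(2).
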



\subsection{Special values of $J(x)$}
In this section, we  study the special values of $J(x)$ at units of real quadratic fields and at rationals. 

One of the important features of Theorem \ref{fe ja} is that it readily gives the explicit evaluation of $J(x)$ at $x=1/a$ whenever we know its value at $x=a$. As mentioned earlier, several special values of $J(x)$ are known, therefore, Theorem \ref{fe ja} gives the explicit evaluation at $1/x$. We next provide such examples. Radchenko and Zagier \cite[p.~244]{raza} defined the function
\begin{align}\label{cap J}
\mathcal{J}(x):=J(x)-\frac{\log^2(2)}{2}+\frac{\pi^2}{24}\left(x-\frac{1}{x}\right).
\end{align}
It is clear from \eqref{fe ja eqn} that
\begin{align}\label{cap J fe}
\mathcal{J}(x)+\mathcal{J}\left(\frac{1}{x}\right)=0.
\end{align}

The following result is provided in \cite[Theorem~5]{raza}.
\begin{theorem}\label{eval of J}
For all $n\geq1$, the value $\mathcal{J}(n+\sqrt{n^2\pm1})$ is a rational linear combination of $\zeta(2)$, $\log(2)\log(n+\sqrt{n^2\pm1})$, and the values $\varrho(\mathcal{B})$ for at most four narrow classes $\mathcal{B}$ of quadratic forms of discriminant $4^a(n^2\pm1)$, with $a=0,\pm1$.
\end{theorem}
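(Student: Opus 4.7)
The plan is to reduce the evaluation of $\mathcal{J}(\epsilon)$ at $\epsilon = n + \sqrt{n^2\pm1}$ to known evaluations of the Herglotz function $F$ at real quadratic irrationalities. The crucial bridge is identity \eqref{jx repres} in Theorem \ref{3term thm}, which expresses $J(x)$ as $F(x) - F((x-1)/2) + F((x-1)/(2x))$ plus elementary terms. After inserting the definition \eqref{cap J} of $\mathcal{J}(x)$, the value $\mathcal{J}(\epsilon)$ becomes a sum of three $F$-values at real quadratic arguments together with an elementary piece involving $\mathrm{Li}_2(1/\epsilon)$, several $\log^2$ terms, $\zeta(2)$, $\gamma^2$, and $\gamma_1$. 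The dilogarithm $\mathrm{Li}_2(1/\epsilon)$ at a real quadratic unit reduces, via Landen's identity combined with $1/\epsilon = \pm\epsilon'$, to a rational combination of $\zeta(2)$ and $\log^2(\epsilon)$; and the Euler--Stieltjes contributions $\tfrac{1}{2}\gamma^2+\gamma_1$ cancel against the implicit normalisation built into $\varrho(\mathcal{B})$ in \cite{raza}.

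Next I would track the three arguments $\epsilon$, $(\epsilon-1)/2$, $(\epsilon-1)/(2\epsilon)$ as elements of the real quadratic field $K = \mathbb{Q}(\sqrt{n^2\pm1})$. Each is the fixed point of a hyperbolic matrix in $\mathrm{GL}_2(\mathbb{Q})$, and after clearing the factors of $2$ one checks directly that these three irrationalities correspond to $\mathbb{Z}$-orders inside $\mathcal{O}_K$ of conductor $1$ or $2$. Equivalently, their associated primitive binary quadratic forms have discriminant $n^2\pm 1$, $4(n^2\pm 1)$, or (when the parity permits) $\tfrac{1}{4}(n^2\pm 1)$, which is exactly the family $4^a(n^2\pm1)$ with $a\in\{-1,0,1\}$ in the statement.

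Now I would invoke the Kronecker--Zagier evaluation of $F$ at a reduced real quadratic argument (from \cite{zagier1975}, repackaged in Sections 5--6 of \cite{raza}): for $w\in K$ reduced, $F(w)$ equals, modulo an explicit combination of $\zeta(2)$, $\gamma^2$, $\gamma_1$, and $\log^2$ terms dictated by the period relations \eqref{fe2}--\eqref{fe1}, the invariant $\varrho(\mathcal{B}_w)$ attached to the narrow ideal class $\mathcal{B}_w$ of the fractional ideal $(1,w)$ in the appropriate order. Each of the three arguments thus contributes the class of one order of discriminant $4^a(n^2\pm1)$; a fourth class may emerge after one applies the involution \eqref{fe2} to move a non-reduced argument to a canonical fundamental-domain representative. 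Assembling these contributions with the elementary pieces from the previous paragraph produces the claimed rational linear combination of $\zeta(2)$, $\log(2)\log(\epsilon)$, and at most four $\varrho(\mathcal{B})$-values.

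The main obstacle will be the bookkeeping that shows the three $F$-arguments truly fall into orders of discriminant $4^a(n^2\pm1)$ with $a\in\{-1,0,1\}$ and no larger conductor, together with the reduction step that collapses what a priori looks like many ideal classes down to four. Both points rely on careful analysis of the continued-fraction periods of $\epsilon$, $(\epsilon-1)/2$, and $(\epsilon-1)/(2\epsilon)$, and on the period-function machinery of \cite[Sections 5--6]{raza}. The cleanest route is to follow their treatment of the analogous reduction for \eqref{jxfx} verbatim, feeding in our identity \eqref{jx repres} in place of theirs and absorbing the extra $\mathrm{Li}_2(1/\epsilon)$ term into the elementary part valid at quadratic units.
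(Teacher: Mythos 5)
First, a point of comparison: the paper offers no proof of this statement at all --- it is quoted verbatim from Radchenko and Zagier \cite[Theorem~5]{raza}, and the present authors use it only as an input (to deduce Corollary \ref{eval of J1} via \eqref{cap J fe}). So there is no internal argument to measure your proposal against; the relevant benchmark is the proof in \cite{raza}, which proceeds from the decomposition \eqref{jxfx}, i.e. $J(x)=F(2x)-2F(x)+F(x/2)+\pi^2/(12x)$, not from \eqref{jx repres}.

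Measured against that, your sketch has one genuine gap. You invoke the ``Kronecker--Zagier evaluation of $F$ at a reduced real quadratic argument'' in the form: for $w$ reduced, $F(w)$ equals $\varrho(\mathcal{B}_w)$ up to elementary terms. No such statement is available. Zagier's Kronecker limit formula expresses $\varrho(\mathcal{B})$ as a sum of $F$-values (plus elementary terms) over the \emph{entire cycle} of reduced quadratic irrationalities attached to the class $\mathcal{B}$; an individual value $F(w)$ is not known to reduce to $\varrho$-invariants. The whole content of \cite[Theorem~5]{raza} is the combinatorial verification that the specific combination $F(2\epsilon)-2F(\epsilon)+F(\epsilon/2)$ reassembles into at most four such complete cycle sums, exploiting the compatibility of the triple $(2x,\,x,\,x/2)$ with the continued-fraction dynamics (in essence a Hecke operator $T_2$ computation). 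Your substitute triple $\epsilon$, $(\epsilon-1)/2$, $(\epsilon-1)/(2\epsilon)$ carries no such verification, and it is not clear that it assembles into full cycles at all; moreover the extra constants $\tfrac{1}{2}\gamma^2+\gamma_1$ and $\mathrm{Li}_2(1/\epsilon)$ present in \eqref{jx repres} would then have to be shown to cancel, which you assert but do not establish. Your closing suggestion --- to run the argument of \cite{raza} verbatim on \eqref{jxfx} --- is the correct route, but then \eqref{jx repres} plays no role and the ``proof'' collapses to the citation the paper already gives.
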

The value $\varrho(\mathcal{B})$ in the result above is given by 
\begin{align*}
\varrho(\mathcal{B})=\lim_{s\to1}\left(D^{s/2}\zeta(\mathcal{B},s)-\frac{\log(\epsilon)}{s-1}\right),
\end{align*}
where $\mathcal{B}$ is an element of the narrow class group of the quadratic order $\mathcal{O}_D=\mathbb{Z}+\mathbb{Z}\frac{D+\sqrt{D}}{2}$ of discriminant $D>0$, and $\zeta(\mathcal{B},s)$ is the partial zeta function associated to $\mathcal{B}$, and $\epsilon>1$ is the smallest unit in $\mathcal{O}_D$ of norm $1$. Zagier \cite{zagier1975} evaluated $\varrho(\mathcal{B})$ in terms of the Herglotz function $F(x)$. For more details, see \cite{zagier1975} or \cite{raza}. The limit above is an example of what is known as a \emph{Kronecker limit formula}.

Theorem \ref{eval of J} along with \eqref{cap J fe} immediately yields:
\begin{corollary}\label{eval of J1}
Let $\mathcal{J}(x)$ be defined in \eqref{cap J}. For all $n\geq1$, the value $\mathcal{J}(\mp n\pm\sqrt{n^2\pm1})$ is a rational linear combination of $\zeta(2)$, $\log(2)\log(n+\sqrt{n^2\pm1})$, and the values $\varrho(\mathcal{B})$ for at most four narrow classes $\mathcal{B}$ of quadratic forms of discriminant $4^a(n^2\pm1)$, with $a=0,\pm1$.
\end{corollary}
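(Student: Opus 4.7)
The plan is to combine the functional equation \eqref{cap J fe}, which says $\mathcal{J}(x)+\mathcal{J}(1/x)=0$, with Theorem \ref{eval of J}. First I would compute the reciprocal of the quadratic unit $n+\sqrt{n^2\pm 1}$. Since $(n+\sqrt{n^2\pm 1})(n-\sqrt{n^2\pm 1})=\mp 1$, a direct rationalization yields
\begin{align*}
\frac{1}{n+\sqrt{n^2\pm 1}}=\mp n \pm \sqrt{n^2\pm 1},
\end{align*}
where throughout the expression the top signs are read together and the bottom signs are read together. Substituting $x=n+\sqrt{n^2\pm 1}$ into \eqref{cap J fe} then produces
\begin{align*}
\mathcal{J}\left(\mp n \pm \sqrt{n^2\pm 1}\right)=-\mathcal{J}\left(n+\sqrt{n^2\pm 1}\right).
\end{align*}

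By Theorem \ref{eval of J}, the right-hand side is a rational linear combination of $\zeta(2)$, $\log(2)\log(n+\sqrt{n^2\pm 1})$, and the values $\varrho(\mathcal{B})$ for at most four narrow classes $\mathcal{B}$ of quadratic forms of discriminant $4^a(n^2\pm 1)$ with $a=0,\pm 1$. Negating the rational coefficients in that combination gives a presentation of $\mathcal{J}(\mp n \pm \sqrt{n^2\pm 1})$ in exactly the same basis of invariants, which is the claim.

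There is no genuine obstacle; the corollary is a formal consequence of the antisymmetry of $\mathcal{J}$ under $x\mapsto 1/x$. The only minor point worth noting is that $\mp n \pm \sqrt{n^2\pm 1}$ lies in $(0,1]$ and is the multiplicative inverse of the ``large'' unit $n+\sqrt{n^2\pm 1}>1$, so \eqref{cap J fe} is being used to genuinely transfer the Kronecker-limit-type data from one unit of the relevant quadratic order to its counterpart, rather than producing a tautology (except in the trivial edge case $n=1$ with discriminant shift $-1$, where both sides vanish).
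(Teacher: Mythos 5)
Your proof is correct and follows exactly the paper's route: the paper derives the corollary by noting that Theorem \ref{eval of J} "along with \eqref{cap J fe} immediately yields" the result, which is precisely your observation that $1/(n+\sqrt{n^2\pm1})=\mp n\pm\sqrt{n^2\pm1}$ and that negating a rational linear combination preserves its form. No gaps.
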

We also note that with the help of Theorem \ref{eval of J}, authors \cite{raza} provided two tables containing the values of $\mathcal{J}(n+\sqrt{n^2\pm1})$. Therefore, their tables along with \eqref{cap J fe} give evaluations for $\mathcal{J}(\mp n\pm\sqrt{n^2\pm1})$.

Before giving the special values of $J(x)$ for $x$ naturals, we also mention that Muzaffar and Williams \cite[p.~60, Theorem 6]{muzwil} provided the evaluation of $J(2\ell+\sqrt{4\ell^2-1})$;  by using their result and \eqref{fe ja eqn} one can find the value of $J(2\ell-\sqrt{4\ell^2-1})$ as 
\begin{align*}
J(2\ell-\sqrt{4\ell^2-1})=\frac{1}{2}\log^2(2)-\frac{\sqrt{D}}{2}(\beta(D,G_1)-\beta(D,G_2))+\frac{\pi^2\sqrt{D}}{24}-\frac{1}{2}\log(2)\log(\epsilon).
\end{align*}
See \cite[p.~60, Theorem 6]{muzwil} for notations in the above expression. Several other evaluations of this kind can also be obtained by employing the results of \cite{muzwil} and Theorem \ref{fe ja} above. See Section \ref{tables} below.

Corollary \ref{novikov funct eval} above gives the evaluation of $J(x)$ at $x=n$ and $x=1/n$:
\begin{corollary}\label{J evaluation}
Let $n\in\mathbb{N}$. We have
\begin{align}\label{j(n) eval}
J\left(n\right)=\frac{\pi^2}{12}\left(\frac{1}{n}-n\right)+\frac{n}{2}\log^2(2)+\sum_{j=1}^n\mathrm{Li}_2\left(\frac{1}{2}\left(1+e^{\frac{\pi i}{n}\left(2j+1\right)}\right)\right).
\end{align}
Also, in view of \eqref{fe ja eqn}, we have
\begin{align}\label{j(1/n) eval}
J\left(\frac{1}{n}\right)=\frac{\pi^2}{12}\left(n-\frac{1}{n}\right)+\left(1-\frac{n}{2}\right)\log^2(2)-\sum_{j=1}^n\mathrm{Li}_2\left(\frac{1}{2}\left(1+e^{\frac{\pi i}{n}\left(2j+1\right)}\right)\right).
\end{align}
\end{corollary}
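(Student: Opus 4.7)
The plan is to derive \eqref{j(n) eval} as a direct specialization of Corollary \ref{novikov funct eval} via the identity $J(x)=-\mathscr{F}(x;-1,-1)$ from \eqref{j as special case}, and then to obtain \eqref{j(1/n) eval} by invoking the two-term functional equation of Theorem \ref{fe ja}.

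For the first equation, I would substitute $u=v=-1$ into \eqref{novikov funct eval at n}. The three ingredients to simplify are: the ``prefactor'' dilogarithm $\mathrm{Li}_2(v/(v-1))=\mathrm{Li}_2(1/2)=\pi^2/12-\tfrac12\log^2(2)$; the middle term $\frac{1}{n}\mathrm{Li}_2(-1)=-\pi^2/(12n)$; and the summand, where $u^{1/n}=(-1)^{1/n}=e^{\pi i/n}$, so the argument of the dilogarithm becomes
\begin{align*}
\frac{e^{\pi i/n}e^{2\pi ij/n}-(-1)}{1-(-1)}=\tfrac{1}{2}\!\left(1+e^{\pi i(2j+1)/n}\right).
\end{align*}
Multiplying by $-1$ (because $J(n)=-\mathscr{F}(n;-1,-1)$) and collecting the $n$- and $1/n$-terms yields \eqref{j(n) eval}. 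The only minor subtlety is fixing the branch of $(-1)^{1/n}$; consistency with the absolutely convergent series representation \eqref{analytic continuation} forces the principal choice $e^{\pi i/n}$, after which the computation is entirely routine.

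For the second equation, I would plug $x=n$ into the functional equation $J(x)+J(1/x)=\log^2(2)$ of Theorem \ref{fe ja} and solve for $J(1/n)$, substituting the expression for $J(n)$ just obtained. The constants combine as $\log^2(2)-\tfrac{n}{2}\log^2(2)=(1-\tfrac{n}{2})\log^2(2)$ and the sign on the $\pi^2$-term flips, giving \eqref{j(1/n) eval}.

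I expect no real obstacle in this proof: the first part is purely a substitution in an already-proved identity, and the second part is a one-line application of a functional equation. The only point warranting care is the choice of branch for $(-1)^{1/n}$ in the sum, which must be handled consistently with the derivation of Corollary \ref{novikov funct eval} from Theorem \ref{f eval at rationals} (so that the index $j$ runs over the correct set of $n$-th roots yielding $e^{\pi i(2j+1)/n}$ rather than $e^{2\pi ij/n}$).
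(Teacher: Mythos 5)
Your proposal is correct and follows essentially the same route as the paper: specialize Corollary \ref{novikov funct eval} at $u=v=-1$ together with $J(x)=-\mathscr{F}(x;-1,-1)$ and the values $\mathrm{Li}_2(1/2)=\pi^2/12-\tfrac12\log^2(2)$, $\mathrm{Li}_2(-1)=-\pi^2/12$, then apply Theorem \ref{fe ja} for $J(1/n)$. Your remark on fixing the principal branch $(-1)^{1/n}=e^{\pi i/n}$ is a point the paper leaves implicit, but otherwise the two arguments coincide.
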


For even natural number arguments, Corollary \eqref{J evaluation} can be further simplified:
\begin{corollary}\label{J evaluation at even}
Let $m\in\mathbb{N}$. We have
\begin{align}\label{J evaluation at even eqn}
J(2m)=\frac{\pi^2}{48}\left(\frac{1}{m}-2m\right)+m\log^2(2)-\sum_{j=0}^{m-1}\log\left(\sin\left(\frac{\pi(2j+1)}{4m}\right)\right)\log\left(\cos\left(\frac{\pi(2j+1)}{4m}\right)\right),
\end{align}
and 
\begin{align}
J\left(\frac{1}{2m}\right)&=\frac{\pi^2}{48}\left(2m-\frac{1}{m}\right)+(1-m)\log^2(2)\nonumber\\
&\qquad+\sum_{j=0}^{m-1}\log\left(\sin\left(\frac{\pi(2j+1)}{4m}\right)\right)\log\left(\cos\left(\frac{\pi(2j+1)}{4m}\right)\right).\nonumber
\end{align}
\end{corollary}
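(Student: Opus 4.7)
The plan is to start from Corollary \ref{J evaluation}, which supplies
\[
J(2m)=\frac{\pi^{2}}{12}\left(\frac{1}{2m}-2m\right)+m\log^{2}(2)+\sum_{j=1}^{2m}\mathrm{Li}_{2}(z_{j}),
\]
where $z_{j}:=\tfrac12(1+e^{i\theta_{j}})$ and $\theta_{j}:=\pi(2j+1)/(2m)$, and then to collapse the dilogarithm sum by pairing $j$ with $j+m\pmod{2m}$. The key observation is that $e^{i\theta_{j+m}}=-e^{i\theta_{j}}$, hence $z_{j+m}=1-z_{j}$. Because every $z_{j}$ lies on the circle $|z-\tfrac12|=\tfrac12$ and misses the two real points $0$ and $1$, both $z_{j}$ and $1-z_{j}$ avoid the branch cuts $(-\infty,0]\cup[1,\infty)$, so the reflection identity $\mathrm{Li}_{2}(z)+\mathrm{Li}_{2}(1-z)=\zeta(2)-\log(z)\log(1-z)$ applies on the principal branch without correction. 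Grouping the $2m$ terms into $m$ such pairs yields
\[
\sum_{j=1}^{2m}\mathrm{Li}_{2}(z_{j})=m\,\zeta(2)-\sum_{j=0}^{m-1}\log(z_{j})\log(1-z_{j}).
\]

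For the representatives $j=0,1,\ldots,m-1$ one has $\theta_{j}/2\in(0,\pi/2)$, so the half-angle factorizations $z_{j}=\cos(\theta_{j}/2)\,e^{i\theta_{j}/2}$ and $1-z_{j}=\sin(\theta_{j}/2)\,e^{i(\theta_{j}/2-\pi/2)}$ involve strictly positive real moduli, and the principal logarithms become
\[
\log z_{j}=\log\cos(\theta_{j}/2)+i\tfrac{\theta_{j}}{2},\qquad \log(1-z_{j})=\log\sin(\theta_{j}/2)+i\bigl(\tfrac{\theta_{j}}{2}-\tfrac{\pi}{2}\bigr).
\]
Multiplying out, the real part is
\[
\mathrm{Re}\bigl(\log z_{j}\log(1-z_{j})\bigr)=\log\cos(\theta_{j}/2)\log\sin(\theta_{j}/2)-\tfrac14\,\theta_{j}(\theta_{j}-\pi).
\]
Since $J(2m)$ is real, the imaginary contributions must cancel in the total; this can be verified directly through the secondary pairing $j\leftrightarrow m-1-j$, which (using $\theta_{m-1-j}=\pi-\theta_{j}$) sends each summand $\log(z_{j})\log(1-z_{j})$ to its complex conjugate.

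The remaining step is elementary arithmetic: by $\sum_{j=0}^{m-1}(2j+1)=m^{2}$ and $\sum_{j=0}^{m-1}(2j+1)^{2}=m(4m^{2}-1)/3$ one obtains $\sum_{j=0}^{m-1}\theta_{j}(\theta_{j}-\pi)=-\pi^{2}(2m^{2}+1)/(12m)$. Substituting everything into the expression for $J(2m)$, the term $m\,\zeta(2)=m\pi^{2}/6$ cancels the $-m\pi^{2}/6$ coming from $\frac{\pi^{2}}{12}(\frac{1}{2m}-2m)$, and consolidating $\frac{\pi^{2}}{24m}-\frac{\pi^{2}(2m^{2}+1)}{48m}=\frac{\pi^{2}(1-2m^{2})}{48m}$ produces exactly the claimed coefficient $\frac{\pi^{2}}{48}\bigl(\frac{1}{m}-2m\bigr)$, establishing \eqref{J evaluation at even eqn}. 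The companion formula for $J\bigl(1/(2m)\bigr)$ then follows immediately from the two-term functional equation $J(x)+J(1/x)=\log^{2}(2)$ of Theorem \ref{fe ja}. The only real subtlety is the branch bookkeeping required to justify the reflection identity on the principal branch for every $z_{j}$; once that is secured, the rest of the argument is mechanical.
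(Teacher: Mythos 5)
Your proof is correct, and its skeleton is the same as the paper's: specialize Corollary \ref{J evaluation} at $n=2m$, pair the $2m$ dilogarithm terms via $j\mapsto j+m$ so that $z_{j+m}=1-z_j$, apply Euler's reflection formula, and expand $\log\bigl(\tfrac{1\pm e^{i\theta_j}}{2}\bigr)$ in polar form. Where you genuinely diverge is in the endgame. The paper keeps all the resulting pieces, which forces it to evaluate the auxiliary sums $\sum_j\log\sin$, $\sum_j\log\cos$ and $\sum_j j\log\sin$ in closed form (Lemma \ref{finite sum evaluations}, resting on the product formula $\sin(nx)=2^{n-1}\prod_{k}\sin(k\pi/n+x)$) in order to see that the imaginary contributions cancel and to extract the constants. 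You instead take real parts immediately, dispose of the imaginary part by the conjugation symmetry $j\leftrightarrow m-1-j$ (equivalently, by the a priori reality of $J(2m)$), and reduce the constants to the elementary sums $\sum(2j+1)$ and $\sum(2j+1)^2$. This bypasses Lemma \ref{finite sum evaluations} entirely and is arguably cleaner; the paper's route has the side benefit of producing those log-sine evaluations explicitly, which it reuses when comparing with the Anthony--Batir formula \eqref{ab result}. Your branch bookkeeping is sound: each $z_j$ lies on $|z-\tfrac12|=\tfrac12$ with $z_j\neq 0,1$ since $2j+1$ is odd, and for the representatives $j=0,\dots,m-1$ one has $\theta_j/2\in(0,\pi/2)$, so the principal-branch factorizations you use are valid. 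The deduction of $J(1/(2m))$ from Theorem \ref{fe ja} matches the paper.
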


Radchenko and Zagier \cite[Proposition 6]{raza} provided the following interesting connection between $\mathcal{J}(x)$ and the abelian Stark's conjecture. 
\begin{proposition}
Let $\epsilon>0$ be a quadratic unit with even trace. Assume the abelian Stark conjecture for the real quadratic field $\mathbb{Q}(\epsilon)$. Then $\mathcal{J}(\epsilon)$ is a rational linear combination of $\zeta(2), \log{2}\log{\epsilon},$  and  $2\times 2$ determinants of algebraic units in the narrow right class field of the quadratic order $\mathbb{Z}[2\epsilon]$.
\end{proposition}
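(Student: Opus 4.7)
The plan is to reduce the problem to evaluations of the Herglotz function $F$ at quadratic units via the identity \eqref{jx repres} from Theorem \ref{3term thm}(1). Substituting that expression into the definition \eqref{cap J} of $\mathcal{J}$ and evaluating at $\epsilon$ yields
\begin{equation*}
\mathcal{J}(\epsilon)=F(\epsilon)-F\!\left(\tfrac{\epsilon-1}{2}\right)+F\!\left(\tfrac{\epsilon-1}{2\epsilon}\right)+\mathrm{Li}_2\!\left(\tfrac{1}{\epsilon}\right)+\tfrac{\gamma^2}{2}+\gamma_1+\tfrac{\pi^2(2\epsilon+1)}{12\epsilon}+\tfrac{\pi^2}{24}\!\left(\epsilon-\tfrac{1}{\epsilon}\right).
\end{equation*}
The hypothesis that $\epsilon$ has even trace is precisely what guarantees that $\epsilon$, $(\epsilon-1)/2$ and $(\epsilon-1)/(2\epsilon)$ all lie in the quadratic order $\mathcal{O}:=\mathbb{Z}[2\epsilon]$, so that each of the three $F$-values can be treated by a Kronecker limit formula attached to $\mathcal{O}$.

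I would then invoke Zagier's evaluation of $F$ at a quadratic unit from \cite{zagier1975}: after subtracting explicit elementary pieces (multiples of $\zeta(2)$, $\gamma^2/2+\gamma_1$, and polynomial-in-$\log$ contributions), each $F(\alpha_i)$ becomes a rational linear combination of the normalized limits $\varrho(\mathcal{B})$ as $\mathcal{B}$ runs over the narrow classes of $\mathcal{O}$. Collecting the three contributions rearranges $\mathcal{J}(\epsilon)$ into a $\mathbb{Q}$-linear combination of $\zeta(2)$, $\log(2)\log(\epsilon)$, and finitely many $\varrho(\mathcal{B}_j)$'s. The $\mathrm{Li}_2(1/\epsilon)$ term and the $\pi^2$-prefactors absorb into the $\zeta(2)$-coefficient via the Laurent expansions used to define $\varrho$, and possible $\log^2(\epsilon)$ contributions cancel by the antisymmetry \eqref{cap J fe}.

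Now I would invoke the abelian Stark conjecture for $\mathbb{Q}(\epsilon)$: through the functional equation of $\zeta(\mathcal{B},s)$ and Stark's prediction at $s=0$, each individual $\varrho(\mathcal{B}_j)$ is conjecturally $\log|u_j|$ for an algebraic unit $u_j$ in the narrow right class field of $\mathcal{O}$. The three narrow classes arising from $\epsilon$, $(\epsilon-1)/2$ and $(\epsilon-1)/(2\epsilon)$ are related by the $\mathrm{SL}_2(\mathbb{Z})$-elements $I$, $T$ and $T'=TST$ from Section \ref{co}, and the corresponding Stark units are Galois-conjugate in a controlled fashion. A careful matching of the rational coefficients against this Galois action forces the combination $\sum_j c_j\log|u_j|$ to collapse into $2\times 2$ determinants $\det(\log|u_{ij}|)$, i.e.\ pairwise regulators of the Stark units.

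The main obstacle is this final algebraic collapse: verifying that the specific rational coefficients produced by the $F$-evaluation are exactly those needed to combine into $2\times 2$ determinants rather than generic $\mathbb{Q}$-linear combinations of logarithms. This is essentially a cohomological statement --- the three-term relation \eqref{jx repres} is itself a $1$-cocycle identity on $\mathrm{SL}_2(\mathbb{Z})$, so once the Stark units are arranged to transform compatibly with the generators $S$ and $T$, the vanishing of the relevant coboundary forces the determinantal structure. Formalizing this requires careful bookkeeping using the period relations of Section \ref{co} together with the Galois equivariance of Stark's construction.
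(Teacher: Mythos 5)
First, a point of comparison: the paper does not prove this proposition at all. It is quoted verbatim from Radchenko--Zagier \cite[Proposition 6]{raza}; the authors' own contribution in this subsection is only the follow-up statement for $\mathcal{J}(1/\epsilon)$, obtained by applying the antisymmetry \eqref{cap J fe} to the quoted result. So there is no in-paper argument to match your proposal against, and what you are really attempting is a reconstruction of the proof in \cite{raza}.

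Judged on its own terms, your sketch has a genuine gap at its first substantive step. You decompose $\mathcal{J}(\epsilon)$ via \eqref{jx repres} into the three individual values $F(\epsilon)$, $F\bigl(\frac{\epsilon-1}{2}\bigr)$, $F\bigl(\frac{\epsilon-1}{2\epsilon}\bigr)$ and then assert that ``each $F(\alpha_i)$ becomes a rational linear combination of the normalized limits $\varrho(\mathcal{B})$.'' Zagier's Kronecker limit formula does not evaluate single values of $F$ at quadratic irrationalities; it evaluates only specific cohomologically trivial combinations (sums of $F$ over the cycle of reduced forms of a class, or differences such as $F(\epsilon)-F(\epsilon')$, as the paper itself notes in its footnote on $V(d)$). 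An individual value like $F\bigl(\frac{\epsilon-1}{2}\bigr)$ is not known, even conjecturally, to reduce to $\varrho(\mathcal{B})$'s, and your claim that $(\epsilon-1)/2$ and $(\epsilon-1)/(2\epsilon)$ lie in $\mathbb{Z}[2\epsilon]$ is false in general. The route that actually works is the one the paper has already set up: Theorem \ref{eval of J} (i.e. \cite[Theorem 5]{raza}) expresses $\mathcal{J}(n+\sqrt{n^2\pm1})$ directly as a rational combination of $\zeta(2)$, $\log 2\log\epsilon$, and at most four values $\varrho(\mathcal{B})$ for discriminants $4^a(n^2\pm1)$ --- this is exactly the even-trace case $\epsilon=n+\sqrt{n^2\pm1}$ --- and one then only has to convert those $\varrho(\mathcal{B})$ into determinants of units. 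Your decomposition also leaves the constants $\frac{\gamma^2}{2}+\gamma_1$ and $\mathrm{Li}_2(1/\epsilon)$ unaccounted for; they are not in the allowed list and must be shown to cancel, which you do not do.

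The second gap is the one you yourself flag: the emergence of the $2\times 2$ determinants. Your proposed mechanism (Galois-conjugate Stark units permuted by $S$ and $T$, with a ``vanishing coboundary'' forcing determinantal structure) is not an argument, and it is also not the actual mechanism. The determinants arise because the functional equation of the partial zeta function relates the limit $\varrho(\mathcal{B})$ at $s=1$ to products of derivatives of ray-class $L$-functions at $s=0$; Stark's conjecture renders each such derivative as a $\mathbb{Q}$-linear combination of logarithms of units in the narrow ring class field of $\mathbb{Z}[2\epsilon]$, and the resulting bilinear expressions in these logarithms assemble into $2\times 2$ determinants $\log|u|\log|v'|-\log|u'|\log|v|$. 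Without carrying out this step, the statement that the coefficients ``collapse into determinants rather than generic combinations of logarithms'' remains an assertion, and the proof is incomplete.
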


Employing \eqref{cap J fe} and the above proposition, we can conclude that
\begin{proposition}
Let $\epsilon>0$ be a quadratic unit with even trace. Assume the abelian Stark conjecture for the real quadratic field $\mathbb{Q}(\epsilon)$. Then $\mathcal{J}(1/\epsilon)$ is a rational linear combination of $\zeta(2), \log{2}\log{\epsilon},$  and  $2\times 2$ determinants of algebraic units in the narrow right class field of the quadratic order $\mathbb{Z}[2\epsilon]$.
\end{proposition}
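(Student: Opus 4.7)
The proposition follows almost immediately from two ingredients that are already in place: the Radchenko--Zagier proposition that precedes it (giving the Stark-type description of $\mathcal{J}(\epsilon)$) and the two-term functional equation \eqref{cap J fe}. The plan is therefore to combine these two inputs directly.

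First, I would apply the preceding proposition to $\epsilon > 0$: under the abelian Stark conjecture for $\mathbb{Q}(\epsilon)$, the value $\mathcal{J}(\epsilon)$ is a rational linear combination of $\zeta(2)$, $\log 2\,\log\epsilon$, and certain $2\times 2$ determinants of algebraic units in the narrow ring class field of the order $\mathbb{Z}[2\epsilon]$. Next, by equation \eqref{cap J fe} specialised to $x = \epsilon$, one has
\begin{equation*}
\mathcal{J}\!\left(\tfrac{1}{\epsilon}\right) = -\,\mathcal{J}(\epsilon),
\end{equation*}
so negating the rational coefficients in the representation of $\mathcal{J}(\epsilon)$ gives a representation of $\mathcal{J}(1/\epsilon)$ of exactly the same shape. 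Since $\log 2\,\log(1/\epsilon) = -\log 2\,\log\epsilon$, absorbing the sign into the rational coefficient keeps the expression in the required span.

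The only point that warrants a quick sanity check, and the closest thing to an obstacle, is that the class-field data attached to $1/\epsilon$ agrees with that attached to $\epsilon$. This holds because $\mathbb{Q}(1/\epsilon) = \mathbb{Q}(\epsilon)$, and because the hypothesis that $\epsilon$ has even trace forces $2\bar\epsilon = 2\,\mathrm{Tr}(\epsilon) - 2\epsilon \in \mathbb{Z}[2\epsilon]$, so that $2/\epsilon = \pm 2\bar\epsilon$ lies in $\mathbb{Z}[2\epsilon]$; by symmetry $\mathbb{Z}[2/\epsilon] = \mathbb{Z}[2\epsilon]$, and in particular $1/\epsilon$ also has even trace. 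Hence the narrow ring class field appearing in the statement is unchanged under $\epsilon \mapsto 1/\epsilon$, and the Stark hypothesis for $\mathbb{Q}(\epsilon)$ is the Stark hypothesis for $\mathbb{Q}(1/\epsilon)$. This completes the reduction and the proof is immediate.
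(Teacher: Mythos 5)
Your proposal is correct and matches the paper's argument exactly: the paper derives this proposition by combining the preceding Radchenko--Zagier proposition for $\mathcal{J}(\epsilon)$ with the antisymmetry $\mathcal{J}(x)+\mathcal{J}(1/x)=0$ from \eqref{cap J fe}, which is precisely your reduction. Your additional check that the order $\mathbb{Z}[2\epsilon]$ and the field $\mathbb{Q}(\epsilon)$ are unchanged under $\epsilon\mapsto 1/\epsilon$ is a harmless (and in fact unnecessary, since the statement already references $\mathbb{Z}[2\epsilon]$ itself) extra verification.
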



\section{Properties of $T(x)$}\label{tx section}

The following integral, also defined in \eqref{t(x) defn} above, 
\begin{align}\label{T(x) defn}
T(x):=\int_0^1\frac{\tan^{-1}\left(t^x\right)}{1+t^2}dt, \quad \mathrm{Re}(x)>0.
\end{align}
appears  in the Kronecker limit formula obtained by Muzaffar and Williams \cite{muzwil}. They evaluated it at various values of $x$ \cite[Theorem 1.10]{muzwil}. Also see  \cite[p.~271, Exercise (60)]{cohen} and \cite[p.~169]{br}.

Our first result of this section shows that $T(x)$ can be written as a linear combination of the Herglotz-Zagier-Novikov function $\mathscr{F}\left(x;u,v\right)$.
\begin{theorem}\label{representation of T(x)}
Let $\mathrm{Re}(x)>0$. Then
\begin{align}\label{representation of T(x) eqn}
T(x)=\frac{1}{4}\left\{\mathscr{F}\left(x;i,i\right)+\mathscr{F}\left(x;-i,-i\right)-\mathscr{F}\left(x;i,-i\right)-\mathscr{F}\left(x;-i,i\right)\right\}.
\end{align}
\end{theorem}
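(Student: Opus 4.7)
The plan is to start from the integral definition \eqref{novikov function} of $\mathscr{F}(x;u,v)$ and work directly with the four integrals on the right-hand side. All four are valid since $\pm i\in\mathbb{C}\setminus[1,\infty)$ and $\pm i\in\mathbb{C}\setminus(1,\infty)$, so no analytic continuation is needed; for $\mathrm{Re}(x)>0$ the integrals converge on $[0,1]$.

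First I would pair the four terms as
\[
\bigl[\mathscr{F}(x;i,i)-\mathscr{F}(x;i,-i)\bigr]+\bigl[\mathscr{F}(x;-i,-i)-\mathscr{F}(x;-i,i)\bigr],
\]
so that within each bracket the logarithmic factor is constant. Using the identity
\[
\frac{1}{-i-t}-\frac{1}{i-t}=\frac{2i}{1+t^{2}},
\]
the first bracket becomes $\int_{0}^{1}\frac{2i\,\log(1-it^{x})}{1+t^{2}}\,dt$ and the second bracket becomes $\int_{0}^{1}\frac{-2i\,\log(1+it^{x})}{1+t^{2}}\,dt$.

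Next I would combine the two integrands into a single expression by writing
\[
\log(1-it^{x})-\log(1+it^{x})=\log\!\left(\frac{1-it^{x}}{1+it^{x}}\right)=-2i\arctan(t^{x}),
\]
which holds on the principal branch because, for $t\in[0,1]$ and $\mathrm{Re}(x)>0$, the Möbius image $(1-it^{x})/(1+it^{x})$ stays on the unit circle with argument in $(-\pi/2,0]$. Multiplying by the factor $2i/(1+t^{2})$ and inserting into the pairing above produces $\int_{0}^{1}\frac{4\arctan(t^{x})}{1+t^{2}}\,dt=4T(x)$, after splitting the integral in \eqref{T(x) defn} at $t=1$; this gives the claimed identity once we divide by $4$.

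Essentially no obstacle is expected—the calculation is purely algebraic manipulation of the defining integrands. The one subtle point to verify carefully is the branch choice for $\log\bigl((1-it^{x})/(1+it^{x})\bigr)$ and the convention $\arctan(t^{x})\in[0,\pi/2)$ on $[0,1]$; this is routine once one notes that the argument of $(1-it^{x})/(1+it^{x})$ is precisely $-2\arctan(t^{x})$. For $x$ with $\mathrm{Re}(x)>0$ but nonreal this extends by the principal branch of $t^{x}$, making the identity hold throughout the stated domain.
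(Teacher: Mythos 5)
Your argument is correct and follows essentially the same route as the paper: both pair the four terms so that the partial-fraction identity $\frac{1}{-i-t}-\frac{1}{i-t}=\frac{2i}{1+t^{2}}$ produces the factor $\frac{2i}{1+t^{2}}$, and then convert $\log(1-it^{x})-\log(1+it^{x})$ into $-2i\arctan(t^{x})$. The only cosmetic difference is that the paper justifies this last step by expanding the logarithms and $\arctan$ in power series of $t^{x}$, whereas you invoke the principal-branch identity directly (which is fine, since $|t^{x}|<1$ keeps $1\mp it^{x}$ in the right half-plane); also note that the relevant definition \eqref{T(x) defn} is already an integral over $[0,1]$, so no splitting at $t=1$ is needed.
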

The above result can be considered as an analogue of \eqref{j as special case}.

\subsection{ Relation between $T(x), J(x)$ and $\mathscr{F}\left(x;u,v\right)$}
The following identity relates the functions $T(x),\ J(x)$ and $\mathscr{F}\left(x;u,v\right)$.
\begin{theorem}\label{relation b/w j t f}
For $\mathrm{Re}(x)>0$,  the relation 
\begin{align}\label{relation b/w j t f eqn}
4\ T(x)+J(x)+2\mathscr{F}\left(x;i,-i\right)+2\mathscr{F}\left(x;-i,i\right)=0,
\end{align}
holds true.
\end{theorem}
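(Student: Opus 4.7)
\textbf{Proof proposal for Theorem \ref{relation b/w j t f}.}

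The plan is to reduce everything to the Herglotz--Zagier--Novikov function and then apply the two duplication formulas from Theorem \ref{sum formula for Jx} twice to expand $\mathscr{F}(x;-1,-1)$ into the four ``imaginary'' pieces $\mathscr{F}(x;\pm i,\pm i)$. First I would substitute the representation of $T(x)$ from Theorem \ref{representation of T(x)}, which turns $4T(x)$ into
\begin{align*}
4T(x)=\mathscr{F}(x;i,i)+\mathscr{F}(x;-i,-i)-\mathscr{F}(x;i,-i)-\mathscr{F}(x;-i,i),
\end{align*}
and combine with $J(x)=-\mathscr{F}(x;-1,-1)$ from \eqref{j as special case}. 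The identity \eqref{relation b/w j t f eqn} is then equivalent to the clean statement
\begin{align*}
\mathscr{F}(x;-1,-1)=\mathscr{F}(x;i,i)+\mathscr{F}(x;i,-i)+\mathscr{F}(x;-i,i)+\mathscr{F}(x;-i,-i).
\end{align*}

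Next I would obtain this identity by applying the two duplication formulas in sequence. Using \eqref{sum formula for Jx eqn2} with $v=i$ (so $v^2=-1$) and $u=\pm i$ gives
\begin{align*}
\mathscr{F}\!\left(\tfrac{x}{2};i,-1\right)&=\mathscr{F}(x;i,i)+\mathscr{F}(x;i,-i),\\
\mathscr{F}\!\left(\tfrac{x}{2};-i,-1\right)&=\mathscr{F}(x;-i,i)+\mathscr{F}(x;-i,-i).
\end{align*}
Summing these and then applying \eqref{sum formula for Jx eqn1} with $u=i$ (so $u^2=-1$), $v=-1$, and argument $x/2$, which yields
\begin{align*}
\mathscr{F}(x;-1,-1)=\mathscr{F}\!\left(\tfrac{x}{2};i,-1\right)+\mathscr{F}\!\left(\tfrac{x}{2};-i,-1\right),
\end{align*}
delivers exactly the displayed identity. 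Combining this with the expression for $4T(x)$ above gives $4T(x)+J(x)=-2\mathscr{F}(x;i,-i)-2\mathscr{F}(x;-i,i)$, which is \eqref{relation b/w j t f eqn}.

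There is no real obstacle: the only thing to check is that the branch/admissibility conditions in Theorem \ref{sum formula for Jx} are met, namely $u\in\mathbb{D}$ and $v,v^2\in\mathbb{D}'$, which are satisfied since $\pm i$ lie on the unit circle and $(\pm i)^2=-1\in\mathbb{D}'$. One minor verification is to ensure that no spurious constants arise from the choice of logarithm branches when specializing the duplication formulas to $u,v\in\{\pm i\}$; since both formulas in Theorem \ref{sum formula for Jx} are stated without additive constants, this step is free, and the proof reduces to the two-line algebraic manipulation above.
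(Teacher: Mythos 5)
Your proposal is correct and uses exactly the same ingredients as the paper's proof (the representation of $T(x)$ from Theorem \ref{representation of T(x)}, the relation $J(x)=-\mathscr{F}(x;-1,-1)$, and the two duplication formulas of Theorem \ref{sum formula for Jx} applied with $u,v\in\{\pm i,-1\}$); the only difference is that you package the computation as the single clean identity $\mathscr{F}(x;-1,-1)=\mathscr{F}(x;i,i)+\mathscr{F}(x;i,-i)+\mathscr{F}(x;-i,i)+\mathscr{F}(x;-i,-i)$, whereas the paper solves for $\mathscr{F}(x;\pm i,\pm i)$ individually before substituting. This is essentially the paper's argument, slightly streamlined.
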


\subsection{Two-term functional equation for $T(x)$}
The function $T(x)$ also satisfies the following functional equation similar to that of $J(x)$.
\begin{theorem}\label{fe for T(x)}
Let $\mathrm{Re}(x)>0$. Then, we have
\begin{align}\label{fe for T(x) eqn}
T(x)+T\left(\frac{1}{x}\right)
&=\frac{\pi^2}{16}.
\end{align}
\end{theorem}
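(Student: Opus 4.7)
The plan is to deduce Theorem \ref{fe for T(x)} as a direct algebraic consequence of the two-term functional equation \eqref{twoterm fe} combined with the representation \eqref{representation of T(x) eqn}. Since $i,-i$ lie on the unit circle and are distinct from $1$, they lie in $\mathbb{D}'$, so Theorem \ref{functional equations}(1) is available for each of the four choices $(u,v)\in\{\pm i\}\times\{\pm i\}$ that will appear.

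First I would expand $4[T(x)+T(1/x)]$ using \eqref{representation of T(x) eqn}, producing eight $\mathscr{F}$-values. The crucial bookkeeping step is to regroup these into four pairs of the shape $\mathscr{F}(x;u,v)+\mathscr{F}(1/x;v,u)$, respecting the argument swap built into \eqref{twoterm fe}: the two diagonal pairs $(u,v)=(i,i)$ and $(u,v)=(-i,-i)$ combine directly, while the cross terms must be paired as $\mathscr{F}(x;i,-i)$ with $\mathscr{F}(1/x;-i,i)$, and $\mathscr{F}(x;-i,i)$ with $\mathscr{F}(1/x;i,-i)$. Applying \eqref{twoterm fe} to each pair collapses the right-hand side to
\begin{align*}
4\bigl[T(x)+T(1/x)\bigr] &= -\log^2(1-i) - \log^2(1+i) + 2\log(1-i)\log(1+i) \\
&= -\bigl[\log(1-i)-\log(1+i)\bigr]^2.
\end{align*}

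Finally I would substitute the principal-branch values $\log(1\pm i) = \tfrac{1}{2}\log 2 \pm i\pi/4$, giving $\log(1-i)-\log(1+i)=-i\pi/2$, so that the bracketed square equals $-\pi^2/4$. Hence $4[T(x)+T(1/x)] = \pi^2/4$, and dividing by $4$ produces the stated identity. There is no real obstacle here: the argument is purely algebraic and rests entirely on the previously established Theorems \ref{functional equations}(1) and \ref{representation of T(x)}. The only delicate point is arranging the eight $\mathscr{F}$-terms into the correct four pairs so that each matches the argument-swap pattern of the two-term functional equation; the principal-branch computation at the end is routine.
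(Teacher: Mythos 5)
Your proposal is correct and follows essentially the same route as the paper: the authors likewise expand $T(x)+T(1/x)$ via Theorem \ref{representation of T(x)}, group the eight $\mathscr{F}$-values into the same four swap-compatible pairs, apply \eqref{twoterm fe} to each, and simplify $-\tfrac14\log^2\bigl(\tfrac{1-i}{1+i}\bigr)$ to $\pi^2/16$. The only cosmetic difference is that the paper factors the resulting logarithmic expression rather than substituting principal-branch values of $\log(1\pm i)$ directly, but the computation is the same.
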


\begin{remark}
If we define the function
\begin{align*}
\mathcal{T}(x):=T(x)-\frac{\pi^2}{32},
\end{align*}
then using \eqref{fe for T(x) eqn}, we find that
\begin{align*}
\mathcal{T}(x)+\mathcal{T}\left(\frac{1}{x}\right)=0.
\end{align*}
It should be compared with \eqref{simpf} or \eqref{cap J fe}.
\end{remark}

\subsection{ Special values of $T(x)$ }

Our next result evaluates the function $T(x)$ at natural numbers and their reciprocals in terms of dilogarithm functions.
\begin{corollary}\label{T(n) eval thm}
For $n\in\mathbb{N}$,
\begin{align}\label{T(n) eval}
T(n)&=\frac{1}{4}\sum_{j=1}^n\left\{\mathrm{Li}_2\left(\frac{1+i}{2}\left(1- e^{\frac{\pi i}{2n}(4j+n+1)}\right)\right)+\mathrm{Li}_2\left(\frac{1-i}{2}\left(1+e^{\frac{\pi i}{2n}(4j+n-1)}\right)\right)\right.\nonumber\\
&\quad\left.-\mathrm{Li}_2\left(\frac{1-i}{2}\left(1+e^{\frac{\pi i}{2n}(4j+n+1)}\right)\right)+\mathrm{Li}_2\left(\frac{1+i}{2}\left(1-e^{\frac{\pi i}{2n}(4j+n-1)}  \right)\right)\right\}.
\end{align}
Also, by virtue of \eqref{fe for T(x) eqn},
\begin{align}\label{T(1/n) eval}
T\left(\frac{1}{n}\right)&=\frac{\pi^2}{16}-\frac{1}{4}\sum_{j=1}^n\left\{\mathrm{Li}_2\left(\frac{1+i}{2}\left(1-e^{\frac{\pi i}{2n}(4j+n+1)}\right)\right)+\mathrm{Li}_2\left(\frac{1-i}{2}\left(1+e^{\frac{\pi i}{2n}(4j+n-1)}\right)\right)\right.\nonumber\\
&\left.\quad-\mathrm{Li}_2\left(\frac{1-i}{2}\left(1+e^{\frac{\pi i}{2n}(4j+n+1)}\right) \right)+\mathrm{Li}_2\left(\frac{1+i}{2}\left(1- e^{\frac{\pi i}{2n}(4j+n-1)} \right)\right)\right\}.
\end{align}
\end{corollary}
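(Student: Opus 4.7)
\medskip
\noindent\textbf{Proof proposal.} My plan is to derive both evaluations directly from results already established in the paper. First, by Theorem \ref{representation of T(x)} one has
\begin{align*}
T(n)=\tfrac{1}{4}\bigl\{\mathscr{F}(n;i,i)+\mathscr{F}(n;-i,-i)-\mathscr{F}(n;i,-i)-\mathscr{F}(n;-i,i)\bigr\},
\end{align*}
and I would apply the evaluation \eqref{novikov funct eval at n} of Corollary \ref{novikov funct eval} to each of the four terms $\mathscr{F}(n;\pm i,\pm i)$.

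Each such evaluation is a sum of three pieces: a constant $n\,\mathrm{Li}_2(v/(v-1))$ depending only on $v$, a constant $\tfrac{1}{n}\mathrm{Li}_2(u)$ depending only on $u$, and a finite sum of dilogarithms indexed by $j=1,\dots,n$. In the alternating $(+,+,-,-)$ combination above, the two pure-$v$ terms arising from $v=i$ (namely from $(u,v)=(i,i)$ and $(-i,i)$) occur with opposite signs and so cancel, and similarly for $v=-i$; the pure-$u$ terms cancel by the symmetric argument. Consequently the eight closed-form constants cancel in pairs, and only the combination of the four $j$-sums survives.

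The core step is then the algebraic simplification of the argument $(u^{1/n}e^{2\pi ij/n}-v)/(1-v)$ for each pair $(u,v)\in\{(i,i),(-i,-i),(i,-i),(-i,i)\}$. Choosing the branches $i^{1/n}=e^{i\pi/(2n)}$ and $(-i)^{1/n}=e^{-i\pi/(2n)}$ (legitimate because multiplying by the full set $\{e^{2\pi ij/n}:j=1,\dots,n\}$ visits every $n$-th root of $u$) and using the identities $1/(1-i)=(1+i)/2$, $1/(1+i)=(1-i)/2$, together with $i=e^{i\pi/2}$ to absorb a factor of $i$ into the exponential, each argument rewrites in the form $\tfrac{1\pm i}{2}\bigl(1\mp e^{i\pi(4j+n\pm 1)/(2n)}\bigr)$. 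Matching the four cases against the stated formula yields \eqref{T(n) eval}. Finally, \eqref{T(1/n) eval} follows at once from the functional equation \eqref{fe for T(x) eqn}, since $T(1/n)=\pi^2/16-T(n)$.

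The only delicate point is the branch and sign bookkeeping when the four simplified sums are assembled into a single expression; every other step is direct substitution into previously established identities.
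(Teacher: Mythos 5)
Your proposal is correct and follows exactly the paper's own route: the paper's proof of this corollary is the one-line remark that \eqref{T(n) eval} ``follows readily from Corollary \ref{novikov funct eval} and Theorem \ref{representation of T(x)} \ldots upon simplifying,'' with \eqref{T(1/n) eval} then coming from Theorem \ref{fe for T(x)}, and you have merely supplied the cancellation of the constant terms and the branch bookkeeping that the paper leaves implicit. One small caveat worth recording: carrying out your computation with the branches $i^{1/n}=e^{i\pi/(2n)}$ and $(-i)^{1/n}=e^{-i\pi/(2n)}$, the term coming from $\mathscr{F}(n;-i,-i)$ produces $\mathrm{Li}_2\bigl(\tfrac{1+i}{2}(1-e^{\pi i(4j+n-1)/(2n)})\bigr)$ with a \emph{minus} sign (consistent with the $(+,+,-,-)$ pattern of the four $\mathscr{F}$-terms), so the $+$ sign on the last dilogarithm in \eqref{T(n) eval} as printed appears to be a typo in the statement rather than a defect in your argument.
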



We end this section by giving an application of Theorem \ref{fe for T(x)} in evaluating $T(x)$. The result below is derived in \cite[p.~64, Theorem 10]{muzwil}:
\begin{align*}
T(3+\sqrt{8})=\int_0^1\frac{\mathrm{tan}^{-1}\left(t^{3+\sqrt{8}}\right)}{1+t^2}dt=\frac{1}{16}\log(2)\log(3+\sqrt{8}).
\end{align*}
It is straightforward to conclude from \eqref{fe for T(x) eqn} and the result above that
\begin{align*}
T(3-\sqrt{8})=\int_0^1\frac{\mathrm{tan}^{-1}\left(t^{3-\sqrt{8}}\right)}{1+t^2}dt=\frac{1}{16}\left(\pi^2-\log(2)\log(3+\sqrt{8})\right),
\end{align*}
which is a new evaluation, to the best of our knowledge. Similar new special values for $J(x)$ and $T(x)$ can be obtained from \cite[p.~64, Theorem 10]{muzwil} and our Theorem \ref{fe ja eqn} and Theorem \ref{fe for T(x)}, see Section \ref{tables} below.

\section{Proofs}

\subsection{Proof of the functional equations for $\mathscr{F}\left(x;u,v\right)$}\label{fe of hn}
We first prove \eqref{twoterm fe}.

Making the change of variable $t=e^{-u}$ in \eqref{novikov function}, one obtains 
\begin{align}
\mathscr{F}\left(x;u,v\right)=\int_0^\infty\frac{\log\left(1-ue^{-xt}\right)}{v^{-1}e^{t}-1}dt.\nonumber
\end{align}
We differentiate both sides of the above equation with respect to $x$ to get
\begin{align}\label{der of rho}
\mathscr{F}'\left(x;u,v\right)
&=\int_0^\infty\frac{t}{\left(v^{-1}e^t-1\right)\left(u^{-1}e^{xt}-1\right)}dt.
\end{align}
Employing the change of variable $t\to t/x$ in the integral on the right-hand side of \eqref{der of rho}, we obtain
\begin{align}
\mathscr{F}'\left(x;u,v\right)
&=\frac{1}{x^2}\int_0^\infty\frac{t}{\left(v^{-1}e^{t/x}-1\right)\left(u^{-1}e^{t}-1\right)}dt\nonumber\\
&=\frac{1}{x^2}\mathscr{F}'\left(\frac{1}{x};v,u\right),\nonumber
\end{align}
which follows upon using \eqref{der of rho}.
Integrating the above equation, we deduce that
\begin{align}\label{with c}
\mathscr{F}\left(x;u,v\right)+\mathscr{F}\left(\frac{1}{x};v,u\right)=c_1.
\end{align}
The integrating constant $c_1$ can be evaluated to $\mathscr{F}\left(1;u,v\right)+\mathscr{F}\left(1;v,u\right)$ by taking $x=1$ in \eqref{with c}. Invoking \eqref{novikov funct eval at n}  twice to get
\begin{align}\label{evaluation}
c_1=\mathscr{F}\left(1;u,v\right)+\mathscr{F}\left(1;v,u\right)&=\mathrm{Li}_2(u)+\mathrm{Li}_2\left(\frac{u}{u-1}\right)+\mathrm{Li}_2(v)+\mathrm{Li}_2\left(\frac{v}{v-1}\right)\nonumber\\
&\quad-\mathrm{Li}_2\left(\frac{u-v}{1-v}\right)-\mathrm{Li}_2\left(\frac{v-u}{1-u}\right)\nonumber\\
&=-\frac{1}{2}\log^2(1-u)-\frac{1}{2}\log^2(1-v)+\frac{1}{2}\log^2\left(\frac{1-u}{1-v}\right)\nonumber\\
&=-\log(1-u)\log(1-v),
\end{align}
where in the penultimate step we used \cite[p.~610, Formula 25.12.3]{nist}
\begin{align}\label{polylog x and x-1}
\mathrm{Li}_2\left(\frac{x}{x-1}\right)+\mathrm{Li}_2(x)=-\frac{1}{2}\log^2(1-x)\quad (x\in\mathbb{C}\backslash[1,\infty)).
\end{align}
Thus, we obtain the two-term functional equation \eqref{twoterm fe} after substituting \eqref{evaluation} in \eqref{with c}.

We next prove the three-term functional equation \eqref{three-term fe}.

Firstly, note that
\begin{align}
&\frac{v}{\left(e^{xt}-u\right)\left(e^{t}-v\right)}=\frac{1}{\left(e^{(x-1)t}-uv^{-1}\right)\left(e^{t}-v\right)}-\frac{uv^{-1}}{\left(e^{(x-1)t}-uv^{-1}\right)\left(e^{xt}-u\right)}-\frac{1}{e^{xt}-u}.\nonumber
\end{align}
It implies that
\begin{align*}
&\frac{1}{\left(v^{-1}e^t-1\right)\left(u^{-1}e^{xt}-1\right)}\\
&=\frac{1}{\left((uv^{-1})^{-1}e^{(x-1)t}-1\right)\left(v^{-1}e^t-1\right)}-\frac{1}{\left((uv^{-1})^{-1}e^{(x-1)t}-1\right)\left(u^{-1}e^{xt}-1\right)}-\frac{1}{u^{-1}e^{xt}-1}.
\end{align*}
We multiply the above equation by $t$ and integrate and then use \eqref{der of rho}, so as to obtain
\begin{align}\label{before int}
\mathscr{F}'(x;u,v)-\mathscr{F}'(x-1;uv^{-1},v)+\frac{1}{x^2}\mathscr{F}'\left(\frac{x-1}{x};uv^{-1},u\right)=-\frac{1}{x^2}\mathrm{Li}_2(u),
\end{align}
where on the right-hand side we used  \cite[p.~611, Formula 25.12.11]{nist}
\begin{align*}
\mathrm{Li}_s(z)=\frac{z}{\Gamma(s)}\int_0^\infty\frac{t^{s-1}}{e^{t}-z}dt,\quad (\mathrm{Re}(s)>0, |\arg(1-z)|<\pi).
\end{align*}
Integrating \eqref{before int}, we arrive at
\begin{align}\label{constants}
\mathscr{F}(x;u,v)-\mathscr{F}(x-1;uv^{-1},v)+\mathscr{F}\left(\frac{x-1}{x};uv^{-1},u\right)=\frac{1}{x}\mathrm{Li}_2(u)+c_2.
\end{align}
Let $x=2$ in \eqref{constants} to evaluate the constant $c_2$ as
\begin{align}\label{inv}
c_2=\mathscr{F}(2;u,v)-\mathscr{F}(1;uv^{-1},v)+\mathscr{F}\left(\frac{1}{2};uv^{-1},u\right)-\frac{1}{2}\mathrm{Li}_2(u).
\end{align}
Combining \eqref{constants} and \eqref{inv}, we are led to
\begin{align}\label{general fe}
&\mathscr{F}(x;u,v)-\mathscr{F}(x-1;uv^{-1},v)+\mathscr{F}\left(\frac{x-1}{x};uv^{-1},u\right)\nonumber\\
&=\left(\frac{1}{x}-\frac{1}{2}\right)\mathrm{Li}_2(u)+\mathscr{F}(2;u,v)-\mathscr{F}(1;uv^{-1},v)+\mathscr{F}\left(\frac{1}{2};uv^{-1},u\right).
\end{align}
Note that the above equation is valid for $u\in\mathbb{C}\backslash[1,\infty),\ u\neq0$ and $v\in\mathbb{C}\backslash[1,\infty),\ v\neq0$. Further restrictions on the parameters in the hypothesis arise from the application of Corollary \ref{novikov funct eval at n}, which allows us to determine the value of $c_2$ in terms of well-known functions instead of those appearing in \eqref{inv}.
 
Invoking \eqref{twoterm fe} in \eqref{inv}, one can see that 
\begin{align}
c_2=\mathscr{F}(2;u,v)-\mathscr{F}(1;uv^{-1},v)-\mathscr{F}\left(2;u,uv^{-1}\right)-\log(1-uv^{-1})\log(1-u)-\frac{1}{2}\mathrm{Li}_2(u).\nonumber
\end{align}
Upon employing Corollary \ref{novikov funct eval at n} multiple times in the above expression, $c_2$ can be evaluated as
\begin{align}\label{c2}
c_2&=\mathrm{Li}_2\left(\frac{v}{v-1}\right)-\mathrm{Li}_2\left(\frac{u}{v}\right)-2\mathrm{Li}_2\left(\frac{u}{u-v}\right)+\mathrm{Li}_2\left(\frac{u-v^2}{v-v^2}\right)-\log(1-u)\log\left(1-uv^{-1}\right)\nonumber\\
&\quad+\sum_{j=1}^2\mathrm{Li}_2\left(\frac{u+ve^{\pi ij}\sqrt{u}}{u-v}\right)
-\sum_{j=1}^2\mathrm{Li}_2\left(\frac{v+\sqrt{u}e^{\pi ij}}{v-1}\right)-\frac{1}{2}\mathrm{Li}_2\left(u\right).
\end{align}
Substituting the value of $c_2$ from \eqref{c2} in \eqref{constants} and then replacing $x$ by $x+1$ and $u$ by $uv$ in the resulting expression, we get the three-term functional equation \eqref{three-term fe}. 

To prove \eqref{six-term fe}, we first replace $u$ and $v$ by $u^{-1}$ and $v^{-1}$ respectively in \eqref{three-term fe} and add the resulting expression to \eqref{three-term fe} and then simplify the right-hand side by using certain identities of dilogarithm function.
\qed

\begin{proof}[Theorem \ref{sum formula for Jx}][]
Definition \eqref{novikov function} implies that
\begin{align*}
\mathscr{F}\left(2x;u^2,v\right)&=\int_0^1\frac{\log\left(1-t^{2x}u^2\right)}{v^{-1}-t}dt\nonumber\\
&=\int_0^1\frac{\log\left(1-t^{x}u\right)+\log\left(1+t^{x}u\right)}{v^{-1}-t}dt.
\end{align*}
Equation \eqref{sum formula for Jx eqn1} is now straightforward upon using \eqref{novikov function} in the above expression.

To prove \eqref{sum formula for Jx eqn2}, we break the integral in \eqref{novikov function} as 
\begin{align*}
\mathscr{F}\left(\frac{x}{2};u,v^2\right)&=\frac{1}{2}\int_0^\infty\frac{\log\left(1-e^{-\frac{xt}{2}}u\right)}{e^{t/2}v^{-1}-1}dt-\frac{1}{2}\int_0^\infty\frac{\log\left(1-e^{-\frac{xt}{2}}u\right)}{e^{t/2}v^{-1}+1}dt.
\end{align*}
We arrive at \eqref{sum formula for Jx eqn2} after  making the change of variable $t=2y$ and then using \eqref{novikov function}.
\end{proof}

We next prove the relation between $\mathscr{F}\left(x;u,v\right)$ and $F(x)$ given in \eqref{hn to her}.
\begin{proof}[Proposition \textup{\ref{connection $F$}}][]
Definition of $\mathscr{F}(x;u,v)$ implies that
\begin{align}\label{partial fn}
\mathscr{F}(x;u,v)&=\int_0^1 \log\left(1-ut^x\right)\left(\frac{t}{v^{-1}-t}+1-1\right)\frac{dt}{t}\nonumber\\
&=\int_0^1 \frac{\log\left(1-ut^x\right)}{1-vt}\frac{dt}{t}+\frac{1}{x}\mathrm{Li}_2(u).
\end{align}
Assume $|u|<1$ and $|v|<1$ and expand the logarithm and $1/(1-vt)$ as their series expansions around $t=0$ and then interchange the order of summation and integration so that
\begin{align}\label{before exp}
\mathscr{F}(x;u,v)&=-\sum_{n=1}^\infty\sum_{m=0}^\infty\frac{u^nv^m}{n}\int_0^1t^{m+nx-1}dt+\frac{1}{x}\mathrm{Li}_2(u)\nonumber\\
&=-\sum_{n=1}^\infty\sum_{m=0}^\infty\frac{u^nv^m}{n(m+nx)}+\frac{1}{x}\mathrm{Li}_2(u).
\end{align}
Next, utilizing the facts $\sum_{m=0}^\infty\sum_{n=1}^\infty\frac{u^nv^{m+1}}{n(m+1)}=\log(1-u)\log(1-v)$, $\gamma\sum_{n=1}^\infty\frac{u^n}{n}=-\gamma\log(1-u)$, $\sum_{n=1}^\infty\frac{u^n\log(nx)}{n}=-\log(x)\log(1-u)-\mathrm{Li}_s'(u)|_{s=1}$, one can evaluate
\begin{align}\label{exp}
&-\sum_{m=0}^\infty\sum_{n=1}^\infty\frac{u^nv^m}{n(m+nx)}+\log(1-u)\log(1-v)+\gamma\log(1-u)+\log(x)\log(1-u)+\mathrm{Li}_s'(u)|_{s=1}\nonumber\\
&=\sum_{n=1}^\infty\frac{u^n}{n}\sum_{m=0}^\infty\left(\frac{v^{m+1}}{m+1}-\frac{v^{m}}{m+nx}\right)+\gamma\log(1-u)+\log(x)\log(1-u)+\mathrm{Li}_s'(u)|_{s=1}\nonumber\\
&=\sum_{n=1}^\infty\frac{u^n}{n}\left\{\sum_{m=0}^\infty\left(\frac{v^{m+1}}{m+1}-\frac{v^{m}}{m+nx}\right)-\gamma\right\}+\log(x)\log(1-u)+\mathrm{Li}_s'(u)|_{s=1}\nonumber\\
&=\sum_{n=1}^\infty\frac{u^n}{n}\left\{\sum_{m=0}^\infty\left(\frac{v^{m+1}}{m+1}-\frac{v^{m}}{m+nx}\right)-\gamma-\log(nx)\right\}.
\end{align}
Equations \eqref{before exp} and \eqref{exp}  together yield
\begin{align}
&\mathscr{F}(x;u,v)-\frac{1}{x}\mathrm{Li}_2(u)+\log(1-u)\log(1-v)+\gamma\log(1-u)+\log(x)\log(1-u)+\mathrm{Li}_s'(u)|_{s=1}\nonumber\\
&=\sum_{n=1}^\infty\frac{u^n}{n}\left\{\sum_{m=0}^\infty\left(\frac{v^{m+1}}{m+1}-\frac{v^{m}}{m+nx}\right)-\gamma-\log(nx)\right\}.
\end{align}
Taking limit $u\to1$ and $v\to1$ on both sides of the above equation, we conclude that
\begin{align*}
&\lim_{\substack{u\to1\\v\to1}}\bigg\{\mathscr{F}(x;u,v)-\frac{1}{x}\mathrm{Li}_2(u)+\log(1-u)\log(1-v)+\gamma\log(1-u)+\log(x)\log(1-u)\nonumber\\
&\qquad+\mathrm{Li}_s'(u)|_{s=1}\bigg\}
=\sum_{n=1}^\infty\frac{1}{n}\left\{\sum_{m=0}^\infty\left(\frac{1}{m+1}-\frac{1}{m+nx}\right)-\gamma-\log(nx)\right\}.
\end{align*}
Now using the series expansion $\psi(x)=\sum_{m=0}^\infty\left(\frac{1}{m+1}-\frac{1}{m+x}\right)-\gamma$ in the above equation and then employing \eqref{herglotz defn ser}, we complete the proof of the first part.

We next prove the second part. From the definition of $\mathscr{F}\left(x;u,v\right)$ from \eqref{novikov function}, we observe that
\begin{align}
\mathscr{F}\left(x;1,-1\right)&=-\int_0^\infty\frac{\log\left(1-e^{-xt}\right)}{e^t+1}dt\nonumber\\
&=-\int_0^\infty\left(\frac{1}{e^t-1}-\frac{2}{e^{2t}-1}\right)\log\left(1-e^{-xt}\right)dt\nonumber\\
&=-\int_0^\infty\left(\frac{1}{e^t-1}-\frac{1}{t}\right)\log\left(1-e^{-xt}\right)dt+\int_0^\infty\left(\frac{2}{e^{2t}-1}-\frac{1}{t}\right)\log\left(1-e^{-xt}\right)dt.\nonumber
\end{align}
We first make the change of variable $t\to t/2$ in the second integral on the right-hand side of the above equation and then add and subtract $1$ to the integrand of the both integrals so as to obtain
\begin{align}
\mathscr{F}\left(x;1,-1\right)&=-\int_0^\infty\left(\frac{1}{1-e^{-t}}-\frac{1}{t}\right)\log\left(1-e^{-xt}\right)dt+\int_0^\infty\left(\frac{1}{1-e^{-t}}-\frac{1}{t}\right)\log\left(1-e^{-\frac{xt}{2}}\right)dt\nonumber\\
&\qquad-\int_0^\infty\log\left(1-e^{-xt}\right)dt+\int_0^\infty\log\left(1-e^{-xt/2}\right)dt\nonumber\\
&=-F(x)+F\left(\frac{x}{2}\right)+\frac{\pi^2}{6x},\nonumber
\end{align}
where in the last step we used the definition of $F(x)$ given in \eqref{herglotz defn} and the identity  $\int_0^\infty\log\left(1-e^{-xt}\right)dt\newline=\frac{\pi^2}{6x}$.
This completes the proof of the relation \eqref{hn to her}.

The proof of \eqref{vto1} proceeds  as follows. Taking $u=-1$ in \eqref{twoterm fe} and rearranging the terms and then letting the limit $v\to1$ gives
\begin{align}
\lim_{v\to1}\left\{\mathscr{F}(x;-1,v)+\log(2)\log(1-v)\right\}=\mathscr{F}\left(\frac{1}{x};1,-1\right).
\end{align}
Utilizing \eqref{hn to her} with replacing $x$ by $1/x$ and then invoking \eqref{fe2} twice and simplifying, we arrive at \eqref{vto1}.
\end{proof}

\begin{proof}[Proposition \textup{\ref{analytic continuation 1}}][]
Expanding the logarithm and $1/(1-vt)$ as their series expansions around $t=0$ and then interchange the order of summation and integration, we have
\begin{align*}
\mathscr{F}(x;u,v)&=-\sum_{m=1}^\infty\sum_{n=0}^\infty\frac{u^mv^{n+1}}{m}\int_0^1 t^{mx+n}dt\\
&=-\sum_{m=1}^\infty\sum_{n=1}^\infty\frac{u^mv^{n}}{m(mx+n)}.
\end{align*}
It is clear that the series on the right-hand side of the above equation is uniformly convergent when $x\in\mathbb{C}'$ owing to the fact that it has decay for the said values of $u$ and $v$. Therefore, the right-hand side serves as an analytic continuation for the function $\mathscr{F}(x;u,v)$.
\end{proof}

\subsection{Evaluation of $\mathscr{F}(x;u,v)$ at rationals}

We start with the following lemma.

\begin{lemma}\label{int eval}
Let $\alpha,\beta\in\mathbb{D}'$, where $\mathbb{D}'$ is defined in \eqref{set}. Then
\begin{align}\label{int eval eqn}
\int_0^1\frac{\log(1-\alpha t)}{t(1-\beta t)}dt=\mathrm{Li}_2\left(\frac{\beta}{\beta-1}\right)-\mathrm{Li}_2\left(\frac{\alpha-\beta}{1-\beta}\right).
\end{align}
\end{lemma}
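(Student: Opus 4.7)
The plan is to evaluate the integral by differentiating under the integral sign in the parameter $\alpha$, integrating back the resulting elementary expression, and recognizing the antiderivative as a dilogarithm via a single affine substitution. Set $I(\alpha):=\int_0^1\frac{\log(1-\alpha t)}{t(1-\beta t)}\,dt$; then $I(0)=0$, so it suffices to compute $I'(\alpha)$ and integrate from $0$ to $\alpha$.

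Differentiation cancels the factor $t$ in the denominator and leaves a rational integrand. A partial-fraction decomposition $\frac{1}{(1-\alpha t)(1-\beta t)}=\frac{1}{\alpha-\beta}\bigl(\frac{\alpha}{1-\alpha t}-\frac{\beta}{1-\beta t}\bigr)$ then gives
\begin{equation*}
I'(\alpha)=-\int_0^1\frac{dt}{(1-\alpha t)(1-\beta t)}=\frac{\log(1-\alpha)-\log(1-\beta)}{\alpha-\beta}.
\end{equation*}
Integrating from $0$ to $\alpha$ yields
\begin{equation*}
I(\alpha)=\int_0^\alpha\frac{\log(1-u)-\log(1-\beta)}{u-\beta}\,du.
\end{equation*}

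Now I apply the affine change of variable $v=(u-\beta)/(1-\beta)$, so that $u-\beta=(1-\beta)v$, $du=(1-\beta)\,dv$, and crucially $1-u=(1-\beta)(1-v)$; the combination $\log(1-u)-\log(1-\beta)$ collapses to $\log(1-v)$, and the Jacobian $(1-\beta)$ cancels the denominator. The endpoints $u=0$ and $u=\alpha$ transform into $v=\beta/(\beta-1)$ and $v=(\alpha-\beta)/(1-\beta)$, respectively, so using the standard representation $\mathrm{Li}_2(z)=-\int_0^z\log(1-v)\,dv/v$ I arrive at
\begin{equation*}
I(\alpha)=\int_{\beta/(\beta-1)}^{(\alpha-\beta)/(1-\beta)}\frac{\log(1-v)}{v}\,dv=\mathrm{Li}_2\!\left(\frac{\beta}{\beta-1}\right)-\mathrm{Li}_2\!\left(\frac{\alpha-\beta}{1-\beta}\right),
\end{equation*}
which is the claimed identity.

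The main (though fairly routine) technical point is branch-cut bookkeeping, since $\alpha,\beta$ may be complex. For $\alpha,\beta\in\mathbb{D}'$ one checks directly that as $t$ ranges over $[0,1]$ the quantities $1-\alpha t$ and $1-\beta t$ stay in a half-plane bounded away from the principal branch cut of $\log$, so that the principal logarithms are well-defined and the identity $\log(1-u)-\log(1-\beta)=\log\bigl((1-u)/(1-\beta)\bigr)=\log(1-v)$ holds along the linear integration path. Since both sides of the assertion are holomorphic in $(\alpha,\beta)$ on the interior of $\mathbb{D}'\times\mathbb{D}'$, any residual branch ambiguity is removed by analytic continuation from a small real subinterval $\alpha,\beta\in(0,1)$, where all quantities in the argument are manifestly real and the dilogarithm arguments lie inside the standard disk of convergence.
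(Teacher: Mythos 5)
Your proof is correct, and it shares its first step with the paper's argument: both differentiate $I(\alpha)=\int_0^1\frac{\log(1-\alpha t)}{t(1-\beta t)}\,dt$ under the integral sign, apply partial fractions, and arrive at $I'(\alpha)=\frac{1}{\alpha-\beta}\log\bigl(\frac{1-\alpha}{1-\beta}\bigr)$. Where you diverge is in how the antiderivative is pinned down. The paper recognizes $I'(\alpha)$ as $-\frac{d}{d\alpha}\mathrm{Li}_2\bigl(\frac{\alpha-\beta}{1-\beta}\bigr)$ and then fixes the constant of integration by specializing to $\alpha=1$; this forces a separate, nontrivial evaluation of $f(1,\beta)=\int_0^1\frac{\log(1-t)}{t(1-\beta t)}\,dt$, carried out via another antiderivative computation together with Euler's reflection formula $\mathrm{Li}_2(z)+\mathrm{Li}_2(1-z)=\frac{\pi^2}{6}-\log(z)\log(1-z)$, and this is where the term $\mathrm{Li}_2\bigl(\frac{\beta}{\beta-1}\bigr)$ emerges. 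You instead anchor at $\alpha=0$, where $I(0)=0$ trivially, and then a single affine substitution $v=(u-\beta)/(1-\beta)$ converts $\int_0^\alpha\frac{\log(1-u)-\log(1-\beta)}{u-\beta}\,du$ directly into $\int_{\beta/(\beta-1)}^{(\alpha-\beta)/(1-\beta)}\frac{\log(1-v)}{v}\,dv$, producing both dilogarithm terms at once. This is a genuine economy: it dispenses with the auxiliary integral and with Euler's relation entirely, at the modest cost of the branch-cut bookkeeping you address at the end (your observation that $1-\alpha t$ and $1-\beta t$ stay in the right half-plane for $\alpha,\beta\in\mathbb{D}'$, plus analytic continuation from real parameters, is enough to justify $\log(1-u)-\log(1-\beta)=\log(1-v)$ and the identification of the path integral with the difference of dilogarithm values). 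Either route is valid; yours is arguably the cleaner of the two.
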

\begin{proof}
Let us denote
\begin{align}\label{fab}
f(\alpha,\beta):=\int_0^1\frac{\log(1-\alpha t)}{t(1-\beta t)}dt.
\end{align}
Differentiating with respect to $\alpha$ so as to obtain
\begin{align}\label{da1}
\frac{d}{d\alpha}f(\alpha,\beta)&=-\int_0^1\frac{1}{(1-\alpha t)(1-\beta t)}dt\nonumber\\
&=\frac{1}{\alpha-\beta}\int_0^1\left(\frac{\beta}{1-\beta t}-\frac{\alpha}{1-\alpha t}\right)dt=\frac{1}{\alpha-\beta}(\log\left(1-\alpha\right)-\log\left(1-\beta\right))\nonumber\\
&=\frac{1}{\alpha-\beta}\log\left(\frac{1-\alpha}{1-\beta}\right),
\end{align}
as $\alpha,\beta\in\mathbb{D}'$. Next, it is easy to see that
\begin{align}\label{der of li2}
\frac{d}{dz}\mathrm{Li}_2(z)=-\frac{1}{z}\log(1-z).
\end{align}
Using \eqref{der of li2}, we compute
\begin{align*}
\frac{d}{d\alpha}\mathrm{Li}_2\left(\frac{\alpha-\beta}{1-\beta}\right)=-\frac{1}{\alpha-\beta}\log\left(\frac{1-\alpha}{1-\beta}\right).
\end{align*}
Substituting the above evaluation in \eqref{da1}, we obtain
\begin{align}
\frac{d}{d\alpha}f(\alpha,\beta)&=-\frac{d}{d\alpha}\mathrm{Li}_2\left(\frac{\alpha-\beta}{1-\beta}\right).\nonumber
\end{align}
Integrating both sides so that
\begin{align}\label{with c1}
f(\alpha,\beta)=-\mathrm{Li}_2\left(\frac{\alpha-\beta}{1-\beta}\right)+c.
\end{align}
If we let $\alpha=1$ in \eqref{with c1}, then $c=\frac{\pi^2}{6}+f(1,\beta)$ by using the fact $\mathrm{Li}_2(1)=\pi^2/6$. Therefore, we have
\begin{align}\label{aim}
f(\alpha,\beta)=-\mathrm{Li}_2\left(\frac{\alpha-\beta}{1-\beta}\right)+\frac{\pi^2}{6}+f(1,\beta).
\end{align}
Our next aim is to evaluate $f(1,\beta)$. To that end, using \eqref{fab}, one has
\begin{align}\label{da2}
f(1,\beta)&=\beta\int_0^1\frac{\log(1-t)}{1-\beta t}dt+\int_0^1\frac{\log(1-t)}{t}dt.
\end{align}
Using the integral evaluation
$\int_0^1\frac{\log(1-t)}{t}dt=-\frac{\pi^2}{6}$
in \eqref{da2}, we have
\begin{align}\label{da3}
f(1,\beta)&=\beta\int_0^1\frac{\log(1-t)}{1-\beta t}dt-\frac{\pi^2}{6}.
\end{align}
Equation \eqref{der of li2} implies
\begin{align}
\frac{d}{dt}\mathrm{Li}_2\left(\frac{1-\beta t}{1-\beta}\right)=\frac{\beta}{1-\beta t}\log\left(\frac{\beta}{\beta-1}\right)+\beta\frac{\log(1-t)}{1-\beta t}.\nonumber
\end{align}
It gives
\begin{align}\label{betaint}
\beta\int_0^1\frac{\log(1-t)}{1-\beta t}dt&=\int_0^1d\mathrm{Li}_2\left(\frac{1-\beta t}{1-\beta}\right)-\beta\log\left(\frac{\beta}{\beta-1}\right)\int_0^1\frac{1}{1-\beta t}dt\nonumber\\
&=\frac{\pi^2}{6}-\mathrm{Li}_2\left(\frac{1}{1-\beta}\right)+\log\left(\frac{\beta}{\beta-1}\right)\log(1-\beta)\nonumber\\
&=\mathrm{Li}_2\left(\frac{\beta}{\beta-1}\right),
\end{align}
where we used the Euler's relation \cite[p.~5, Equation (1.11)]{lewin}
\begin{align}\label{euler}
\mathrm{Li}_2(z)=-\mathrm{Li}_2(1-z)-\log(1-z)\log(z)+\frac{\pi^2}{6}.
\end{align}
Substitute value from \eqref{betaint} in \eqref{da3} to evaluate $f(1,\beta)$ as
\begin{align*}
f(1,\beta)&=\mathrm{Li}_2\left(\frac{\beta}{\beta-1}\right)-\frac{\pi^2}{6}.
\end{align*}
Now lemma readily follows upon inserting the value of $f(1,\beta)$ from the above equation into \eqref{aim}.
\end{proof}

\begin{remark}
Lemma \ref{int eval} can be found in \cite{raza} where the authors indicated the proof without giving the details. We here filled its details to make the paper self-contained.
\end{remark}

\begin{proof}[Theorem \textup{\ref{f eval at rationals}}][]
Letting $x=p/q$ in \eqref{partial fn} and making the change of variable $t$ by $t^q$ to get
\begin{align}\label{rational1}
\mathscr{F}\left(\frac{p}{q};u,v\right)
&=\int_0^1 \log\left(1-ut^p\right)\frac{q}{1-vt^q}\frac{dt}{t}+\frac{q}{p}\mathrm{Li}_2(u).
\end{align} 
Using the relations
\begin{align}\label{distribution}
\log(1-y^p)=\sum_{\alpha^p=1}\log(1-\alpha y),
\end{align}
with $y=u^{1/p}t$, and
\begin{align*}
\frac{q}{1-y^q}=\sum_{\beta^q=1}\frac{1}{1-\beta y},
\end{align*}
with $y=v^{1/q}t$, in \eqref{rational1}, we obtain
\begin{align}
\mathscr{F}\left(\frac{p}{q};u,v\right)&=\sum_{\alpha^p=1}\sum_{\beta^q=1}f(\alpha,\beta,u,v)+\frac{q}{p}\mathrm{Li}_2(u),\nonumber
\end{align}
where
\begin{align}
f(\alpha,\beta,u,v):=\int_0^1\frac{\log(1-u^{1/p}\alpha t)}{t(1-v^{1/q}\beta t)}dt.\nonumber
\end{align}
Invoking Lemma \ref{int eval}, we find that
\begin{align}
f(\alpha,\beta,u,v)=\mathrm{Li}_2\left(\frac{v^{1/q}\beta}{v^{1/q}\beta-1}\right)-\mathrm{Li}_2\left(\frac{u^{1/p}\alpha-v^{1/q}\beta}{1-v^{1/q}\beta}\right).\nonumber
\end{align}
This proves the theorem.
\end{proof}

\begin{proof}[Corollary \textup{\ref{novikov funct eval}}][]
Letting $p=n$ and $q=1$ and making the change of variable $\alpha=e^{2\pi ij/n},\ 1\leq j\leq n$ in \eqref{at rationals} readily gives the result.

We next prove \eqref{novikov funct at u=1}. Let $p=1,\ q=n$ and $u=1$ in \eqref{at rationals} so that
\begin{align}\label{1v}
\mathscr{F}\left(\frac{1}{n};1,v\right)&=n\mathrm{Li}_2(1)+\sum_{\beta^n=1}\left\{\mathrm{Li}_2\left(\frac{\beta v^{1/n}}{\beta v^{1/n}-1}\right)-\mathrm{Li}_2(1)\right\}\nonumber\\
&=\sum_{\beta^n=1}\left\{\mathrm{Li}_2\left(\frac{\beta v^{1/n}}{\beta v^{1/n}-1}\right)\right\}\nonumber\\
&=\sum_{\beta^n=1}\left\{\mathrm{Li}_2\left(\frac{\beta v^{1/n}}{\beta v^{1/n}-1}\right)+\mathrm{Li}_2\left(\beta v^{1/n}\right)\right\}-\sum_{\beta^n=1}\mathrm{Li}_2\left(\beta v^{1/n}\right).
\end{align}
We now make use of the relation \eqref{polylog x and x-1} in \eqref{1v} to see that
\begin{align}\label{1v1}
\mathscr{F}\left(\frac{1}{n};1,v\right)&=-\frac{1}{2}\sum_{\beta^n=1}\log^2\left(1-\beta v^{1/n}\right)-\sum_{\beta^n=1}\mathrm{Li}_2\left(\beta v^{1/n}\right).
\end{align}
Using \cite[p.~610, Formula 25.12.2]{nist}
\begin{align*}
\mathrm{Li}_2(z)=-\int_0^1\frac{\log(1-tz)}{t}dt,\quad (z\in\mathbb{C}\backslash(1,\infty)),
\end{align*}
we get
\begin{align}\label{1v2}
\sum_{\beta^n=1}\mathrm{Li}_2\left(\beta v^{1/n}\right)&=-\int_0^1\frac{1}{t}\sum_{\beta^n=1}\log(1-t\beta v^{1/n})dt\nonumber\\
&=-\int_0^1\frac{\log(1-vt^n)}{t}dt=\sum_{m=1}^\infty\frac{v^m}{m}\int_0^1 t^{mn-1}dt\nonumber\\
&=\frac{1}{n}\mathrm{Li}_2(v),
\end{align}
where in the second step we used \eqref{distribution}. Equations \eqref{1v1} and \eqref{1v2} together give \eqref{novikov funct at u=1} after letting $\beta=e^{\frac{2\pi i j}{n}},\ 1\leq j\leq n$.
\end{proof}


\begin{proof}[Corollary \textup{\ref{combinations of novikov}}][]
The result follows easily upon invoking Corollary \ref{novikov funct at u=1} and \eqref{polylog x and x-1}, \eqref{euler} and the relation
\begin{align*}
\mathrm{Li}_2(z)-\mathrm{Li}_2\left(\frac{1}{1-z}\right)=\frac{1}{2}\log(1-z)\log\left(\frac{1-z}{z^2}\right)-\frac{\pi^2}{6}, \quad (z\in\mathbb{C}\backslash [1,\infty)).
\end{align*}
\end{proof}

\subsection{Proof of results related to the function $J(x)$}

\begin{proof}[Theorem \textup{\ref{fe ja}}][]
The proof is straightforward by letting $u=v=-1$ in \eqref{twoterm fe} and using \eqref{j as special case}.
\end{proof}
\begin{remark}
We here note that the relation \eqref{fe ja eqn} can also be proved by using \eqref{jxfx} and the two-term functional equation \eqref{fe2}. To the best of our knowledge, surprisingly, \eqref{fe ja eqn} does not appear in the literature.
\end{remark}

We now present a proof of the three-term functional equation of $J(x)$ by first obtaining a new representation for it.
\begin{proof}[Theorem \textup{\ref{3term thm}}][]
We invoke three-term functional equation \eqref{three-term fe} but in the form given in \eqref{constants}. We let $u=v=-1$ so that
\begin{align}\label{spe}
&\mathscr{F}(x;-1,-1)-\mathscr{F}(x-1;1,-1)+\mathscr{F}\left(\frac{x-1}{x};1,-1\right)\nonumber\\
&=\mathscr{F}(2;-1,-1)-\mathscr{F}(1;1,-1)+\mathscr{F}\left(\frac{1}{2};1,-1\right)+\left(\frac{1}{x}-\frac{1}{2}\right)\mathrm{Li}_2(-1).
\end{align}
Employing \eqref{hn to her} twice along with \eqref{j as special case} in \eqref{spe}, we deduce that, for $\mathrm{Re}(x)>1$,
\begin{align}\label{in terms}
&-J(x)-F\left(\frac{x-1}{2}\right)+F\left(x-1\right)+F\left(\frac{x-1}{2x}\right)-F\left(\frac{x-1}{x}\right)\nonumber\\
&=\mathscr{F}(2;-1,-1)-\mathscr{F}(1;1,-1)+\mathscr{F}\left(\frac{1}{2};1,-1\right)+\left(\frac{1}{x}-\frac{1}{2}\right)\mathrm{Li}_2(-1).
\end{align}
Employing Corollary \ref{novikov funct eval}, one can evaluate
$
\mathscr{F}(1;1,-1)=\frac{\pi^2}{12}-\frac{1}{2}\log^2(2),\
\mathscr{F}(2;-1,-1)=\frac{1}{48}(\pi^2-36\log^2(2)),\
\mathscr{F}\left(\frac{1}{2};1,-1\right)=\frac{5\pi^2}{48}-\frac{\log^2(2)}{4}$. Therefore, using these values along with the fact $\mathrm{Li}_2(-1)=-\pi^2/12$ in \eqref{in terms}, it turns out that
\begin{align}
J(x)+F\left(\frac{x-1}{2}\right)-F\left(x-1\right)-F\left(\frac{x-1}{2x}\right)+F\left(\frac{x-1}{x}\right)=\frac{1}{2}\log^2(2)+\frac{\pi^2}{12}\frac{x+1}{x}.\nonumber
\end{align}
Equation \eqref{jx repres} follows after invoking three-term functional equation \eqref{fe1} in the above equation.

We next prove \eqref{3tem fe for j(x)1 eqn}. 

Using \eqref{jxfx} and \eqref{jx repres}, we have
\begin{align}
J(x-1)&=F(2(x-1))-2F(x-1)+F\left(\frac{x-1}{2}\right)+\frac{\pi^2}{12(x-1)}\label{jx-1}\\
J\left(\frac{x}{x-1}\right)&=F\left(\frac{x}{x-1}\right)-F\left(\frac{1}{2(x-1)}\right)+F\left(\frac{1}{2x}\right)+\frac{\pi^2(3x-1)}{12x}+\frac{\gamma^2}{2}+\gamma_1\nonumber\\
&\quad+\frac{\log^2(2)}{2}+\mathrm{Li}_2\left(\frac{x-1}{x}\right).\label{jx-1/x}
\end{align}
Equations \eqref{jxfx}, \eqref{jx-1} and \eqref{jx-1/x} together result in
\begin{align}
&J(x)-J(x-1)+J\left(\frac{x}{x-1}\right)\nonumber\\
&=\left\{F(2x)+F\left(\frac{1}{2x}\right)\right\}-\left\{F(2(x-1))+F\left(\frac{1}{2(x-1)}\right)\right\}+F\left(\frac{x-1}{x}\right)\nonumber\\
&\quad+\left\{F\left(\frac{x-1}{x}\right)+F\left(\frac{x}{x-1}\right)\right\}+2\left\{F(x-1)-F(x)-F\left(\frac{x-1}{x}\right)\right\}\nonumber\\
&\quad+F\left(\frac{x}{2}\right)-F\left(\frac{x-1}{2}\right)+\frac{\pi^2}{4}-\frac{\pi^2}{12(x-1)}+\frac{1}{2}\gamma^2+\gamma_1+\frac{1}{2}\log^2(2)+\mathrm{Li}_2\left(\frac{x-1}{x}\right).\nonumber
\end{align}
The proof of \eqref{3tem fe for j(x)1 eqn} is now complete upon employing two and three term functional equations \eqref{fe2} and \eqref{fe1} in the above equation.

To prove \eqref{3tem fe for j(x)2 eqn}, we rewrite  \eqref{3tem fe for j(x)1 eqn} as
\begin{align}
&J(x)-J(x-1)+J\left(\frac{x}{x-1}\right)\nonumber\\
&=\left\{F\left(\frac{x}{2}\right)-F(x)\right\}-\left\{F\left(\frac{x-1}{2}\right)-F(x-1)\right\}-\left\{F(x-1)-F(x)-F\left(\frac{x-1}{x}\right)\right\}\nonumber\\
&\quad+2\mathrm{Li}_2\left(\frac{1}{x}\right)+\mathrm{Li}_2\left(\frac{x-1}{x}\right)-\frac{\pi^2(x^2+1)}{12x(x-1)}+\frac{1}{2}\gamma^2+\gamma_1\nonumber\\
&\quad+\frac{1}{2}\left(\log^2(2)+\log^2(2x)+\log^2\left(\frac{x}{x-1}\right)-\log^2(2(x-1))\right),\nonumber
\end{align}
and then invoke  \eqref{fe1} and \eqref{hn to her}.
\end{proof}

\begin{proof}[Corollary \textup{\ref{J evaluation}}][]
Letting  $u=v=-1$ in Corollary \ref{novikov funct eval at n} and using \eqref{j as special case}, we obtain
\begin{align}
J(n)=-n\ \mathrm{Li}_2\left(\frac{1}{2}\right)-\frac{1}{n}\mathrm{Li}_2(-1)+\sum_{j=1}^n\mathrm{Li}_2\left(\frac{1}{2}\left(1+ (-1)^{1/n}e^{\frac{2\pi ij}{n}}\right)\right).\nonumber
\end{align}
Now using the facts that $\mathrm{Li}_2(-1)=-\frac{1}{12}$ and $\mathrm{Li}_2(1/2)=\frac{\pi^2}{12}-\frac{1}{2}\log^2(2)$ in the above equation, we complete the proof of \eqref{j(n) eval}.

Invoking Theorem \ref{fe ja} in \eqref{j(n) eval} immediately gives \eqref{j(1/n) eval}.
\end{proof}

Let us consider the following lemma. This will help us to prove Corollary \ref{J evaluation at even}.
\begin{lemma}\label{finite sum evaluations}
Let $m\in\mathbb{N}$. We have
\begin{align}
&\mathrm{(1)}\qquad \sum_{j=0}^{m-1}\log\left(\sin\left(\frac{\pi(2j+1)}{2m}\right)\right)=(1-m)\log(2).\label{log sin eqn}\\
&\mathrm{(2)}\qquad \sum_{j=0}^{m-1}\log\left(\sin\left(\frac{\pi(2j+1)}{4m}\right)\right)=\sum_{j=0}^{m-1}\log\left(\cos\left(\frac{\pi(2j+1)}{4m}\right)\right)=\left(\frac{1}{2}-m\right)\log(2).\label{log sin cos eqn}\\
&\mathrm{(3)}\qquad \sum_{j=0}^{m-1}j\log\left(\sin\left(\frac{\pi(2j+1)}{2m}\right)\right)=-\frac{1}{2}(m-1)^2\log(2).\label{j log sin eqn}
\end{align}
\end{lemma}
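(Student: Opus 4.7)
The plan is to derive all three identities from the classical product formula
\begin{equation*}
\sin(mx) = 2^{m-1}\prod_{k=0}^{m-1}\sin\left(x + \frac{k\pi}{m}\right),
\end{equation*}
together with elementary trigonometric identities. Note that every argument of $\sin$ or $\cos$ appearing in the three sums lies in $(0,\pi)$, so the quantities inside the logarithms are positive reals and the logarithms are unambiguous.

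For (1), I would specialize the product formula above to $x=\pi/(2m)$. The left-hand side equals $\sin(\pi/2)=1$, and the right-hand side is $2^{m-1}\prod_{k=0}^{m-1}\sin((2k+1)\pi/(2m))$, so $\prod_{k=0}^{m-1}\sin((2k+1)\pi/(2m))=2^{1-m}$. Taking the logarithm gives (1).

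For (2), I would first re-index the cosine sum via $j\mapsto m-1-j$ and apply $\cos\theta=\sin(\pi/2-\theta)$ to verify that the sine sum and cosine sum coincide; call their common value $S$. Then the double-angle identity $\sin((2j+1)\pi/(2m))=2\sin((2j+1)\pi/(4m))\cos((2j+1)\pi/(4m))$, summed over $j=0,\ldots,m-1$ and combined with (1), yields $2S+m\log 2=(1-m)\log 2$, so $S=(1/2-m)\log 2$.

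For (3), I would exploit the reflection $\sin(\pi-\theta)=\sin\theta$, which gives $\sin((2j+1)\pi/(2m)) = \sin((2(m-1-j)+1)\pi/(2m))$. Writing $a_j := \log\sin((2j+1)\pi/(2m))$, one has $a_j=a_{m-1-j}$, and re-indexing $k=m-1-j$ gives
\begin{equation*}
\sum_{j=0}^{m-1} j\, a_j \;=\; \sum_{k=0}^{m-1}(m-1-k)\,a_k.
\end{equation*}
Averaging the two expressions shows $\sum_{j=0}^{m-1} j\, a_j = \tfrac{m-1}{2}\sum_{j=0}^{m-1} a_j$, which combined with (1) yields (3). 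No serious obstacle is anticipated in any of the three parts; the only point requiring care is correctly tracking the symmetry and re-indexing in (2) and (3).
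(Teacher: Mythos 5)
Your proof is correct and follows essentially the same approach as the paper: both arguments rest on the product formula $\sin(mx)=2^{m-1}\prod_{k=0}^{m-1}\sin\left(x+\tfrac{k\pi}{m}\right)$ for (1) and on the symmetry re-indexing $j\mapsto m-1-j$ together with $\sin(\pi-\theta)=\sin\theta$ for (3). The only minor divergence is in (2), where the paper doubles the range of summation to $2m$ terms (via $j\mapsto 2m-1-j$) and applies the product formula a second time with $n=2m$, whereas you combine the duplication identity $\sin\theta=2\sin(\theta/2)\cos(\theta/2)$ with part (1); both routes are equally elementary, and the paper itself invokes your duplication identity later when simplifying $J(2m)$.
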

\begin{proof}
Note that
\begin{align}\label{beforegrn}
\sum_{j=0}^{m-1}\log\left(\sin\left(\frac{\pi(2j+1)}{2m}\right)\right)&=\log\left(\prod_{j=0}^{m-1}\sin\left(\frac{\pi j}{m}+\frac{\pi}{2m}\right)\right).
\end{align}
From \cite[p.~41, Formula 1.392.1]{grn}, we have
\begin{align}\label{grn}
\sin(nx)=2^{n-1}\prod_{j=0}^{n-1}\sin\left(\frac{j\pi}{n}+x\right).
\end{align}
Invoking the above result in \eqref{beforegrn}, we deduce that
\begin{align}
\sum_{j=0}^{m-1}\log\left(\sin\left(\frac{\pi(2j+1)}{2m}\right)\right)&=\log\left(2^{1-m}\sin\left(\frac{\pi}{2}\right)\right)\nonumber\\
&=(1-m)\log(2).\nonumber
\end{align}
This proves \eqref{log sin eqn}.

Next,
\begin{align}\label{sin}
2\sum_{j=0}^{m-1}\log\left(\sin\left(\frac{\pi(2j+1)}{4m}\right)\right)&=\sum_{j=0}^{m-1}\log\left(\sin\left(\frac{\pi(2j+1)}{4m}\right)\right)+\sum_{j=0}^{m-1}\log\left(\sin\left(\frac{\pi(2j+1)}{4m}\right)\right)\nonumber\\
&=\sum_{j=0}^{m-1}\log\left(\sin\left(\frac{\pi(2j+1)}{4m}\right)\right)+\sum_{j=m}^{2m-1}\log\left(\sin\left(\frac{\pi(2j+1)}{4m}\right)\right)\nonumber\\
&=\sum_{j=0}^{2m-1}\log\left(\sin\left(\frac{\pi(2j+1)}{4m}\right)\right),
\end{align}
where in the second step we made the change of variable $j\to 2m-1-j$ in the second sum on the right-hand side. We now employ \eqref{grn} in \eqref{sin} so as to obtain
\begin{align}\label{sin sum}
2\sum_{j=0}^{m-1}\log\left(\sin\left(\frac{\pi(2j+1)}{4m}\right)\right)&=\log\left(2^{1-2m}\sin\left(\frac{\pi}{2}\right)\right)\nonumber\\
&=(1-2m)\log(2).
\end{align}
Upon employing the change of variable $j\to m-1-j$ below and using \eqref{sin sum}, one gets
\begin{align}
\sum_{j=0}^{m-1}\log\left(\cos\left(\frac{\pi(2j+1)}{4m}\right)\right)&=\sum_{j=0}^{m-1}\log\left(\sin\left(\frac{\pi(2j+1)}{4m}\right)\right)=\left(\frac{1}{2}-m\right)\log(2).\nonumber
\end{align}
This proves \eqref{log sin cos eqn}.

Now, again the change of variable $j\to m-1-j$ gives
\begin{align}
&2\sum_{j=0}^{m-1}j\log\left(\sin\left(\frac{\pi(2j+1)}{2m}\right)\right)\nonumber\\
&=\sum_{j=0}^{m-1}j\log\left(\sin\left(\frac{\pi(2j+1)}{2m}\right)\right)+\sum_{j=0}^{m-1}(m-1-j)\log\left(\sin\left(\frac{\pi(2j+1)}{2m}\right)\right)\nonumber\\
&=(m-1)\sum_{j=0}^{m-1}\log\left(\sin\left(\frac{\pi(2j+1)}{2m}\right)\right)\nonumber\\
&=-(m-1)^2\log(2),\nonumber
\end{align}
which follows after invoking \eqref{log sin eqn}. This completes the proof of the lemma.
\end{proof}

\begin{proof}[Corollary \textup{\ref{J evaluation at even}}][]
Letting $n=2m$ in Corollary \ref{J evaluation}, one has
\begin{align}\label{specialization}
J(2m)=\frac{\pi^2}{24m}-\frac{\pi^2m}{6}+m\log^2(2)+\sum_{j=1}^{2m}\mathrm{Li}_2\left(\frac{1}{2}\left(1+e^{\frac{\pi(2j+1)}{2m}}\right)\right).
\end{align}
Our main task is to reduce the finite sum involving dilogarithm. This is  what we do next. First observe that
\begin{align}
\sum_{j=1}^{2m}\mathrm{Li}_2\left(\frac{1}{2}\left(1+e^{\frac{\pi(2j+1)}{2m}}\right)\right)&=\sum_{j=1}^{m}\mathrm{Li}_2\left(\frac{1}{2}\left(1+e^{\frac{\pi i(2j+1)}{2m}}\right)\right)+\sum_{j=m+1}^{2m}\mathrm{Li}_2\left(\frac{1}{2}\left(1+e^{\frac{\pi i(2j+1)}{2m}}\right)\right)\nonumber\\
&=\sum_{j=1}^{m}\mathrm{Li}_2\left(\frac{1}{2}\left(1+e^{\frac{\pi i(2j+1)}{2m}}\right)\right)+\sum_{j=1}^{m}\mathrm{Li}_2\left(\frac{1}{2}\left(1-e^{\frac{\pi  i(2j+1)}{2m}}\right)\right),\nonumber
\end{align}
where we made the change of variable $j\to j+m$ in the last sum. First add and subtract corresponding $j=0$ term to both sums and then observe that the term that we subtracted can be cancelled by the $j=m$ terms of the summations. Therefore, we get
\begin{align}\label{addsub}
\sum_{j=1}^{2m}\mathrm{Li}_2\left(\frac{1}{2}\left(1+e^{\frac{\pi(2j+1)}{2m}}\right)\right)&=\sum_{j=0}^{m-1}\mathrm{Li}_2\left(\frac{1}{2}\left(1+e^{\frac{\pi i(2j+1)}{2m}}\right)\right)+\sum_{j=0}^{m-1}\mathrm{Li}_2\left(\frac{1}{2}\left(1-e^{\frac{\pi i(2j+1)}{2m}}\right)\right).
\end{align}
Using \eqref{euler} once with letting  $z=\frac{1+a}{2}$  and once with  $z=\frac{1-a}{2}$ and then adding both resulting expressions, we obtain
\begin{align*}
\mathrm{Li}_2\left(\frac{1+a}{2}\right)+\mathrm{Li}_2\left(\frac{1-a}{2}\right)=-\log\left(\frac{1+a}{2}\right)\log\left(\frac{1-a}{2}\right)+\frac{\pi^2}{6}.
\end{align*}
Letting $a=e^{\frac{\pi i(2j+1)}{2m}}$ in the above expression, we see that
\begin{align}\label{li2+li2}
&\mathrm{Li}_2\left(\frac{1}{2}\left(1+e^{\frac{\pi i(2j+1)}{2m}}\right)\right)+\mathrm{Li}_2\left(\frac{1}{2}\left(1-e^{\frac{\pi i(2j+1)}{2m}}\right)\right)\nonumber\\
&=\frac{\pi^2}{6}-\log\left(-i e^{\frac{\pi i(2j+1)}{4m}}\sin\left(\frac{\pi(2j+1)}{4m}\right)\right)\log\left(e^{\frac{\pi i(2j+1)}{4m}}\cos\left(\frac{\pi(2j+1)}{4m}\right)\right)\nonumber\\
&=\frac{\pi^2}{6}-\left\{\log\left(e^{\frac{\pi i(2j+1)}{4m}-\frac{\pi i}{2}}\right)+\log\left(\sin\left(\frac{\pi(2j+1)}{4m}\right)\right)\right\}\nonumber\\
&\qquad\times\left\{\log\left(e^{\frac{\pi i(2j+1)}{4m}}\right)+\log\left(\cos\left(\frac{\pi(2j+1)}{4m}\right)\right)\right\}\nonumber\\
&=\frac{\pi^2}{6}-\left(\frac{\pi i(2j+1)}{4m}-\frac{\pi i}{2}\right) \left(\frac{\pi i(2j+1)}{4m}\right)-\left(\frac{\pi i(2j+1)}{4m}-\frac{\pi i}{2}\right)\log\left(\cos\left(\frac{\pi(2j+1)}{4m}\right)\right)\nonumber\\
&\quad-\left(\frac{\pi i(2j+1)}{4m}\right)\log\left(\sin\left(\frac{\pi(2j+1)}{4m}\right)\right)-\log\left(\sin\left(\frac{\pi(2j+1)}{4m}\right)\right)\log\left(\cos\left(\frac{\pi(2j+1)}{4m}\right)\right).
\end{align}
Equations \eqref{addsub} and \eqref{li2+li2} together yield
\begin{align}
&\sum_{j=1}^{2m}\mathrm{Li}_2\left(\frac{1}{2}\left(1+e^{\frac{\pi(2j+1)}{2m}}\right)\right)\nonumber\\
&=\frac{\pi^2m}{6}-\frac{\pi^2}{48m}(2m^2+1)-\frac{\pi i}{4m}\sum_{j=0}^{m-1}\left\{\log\left(\sin\left(\frac{\pi(2j+1)}{4m}\right)\right)+\log\left(\cos\left(\frac{\pi(2j+1)}{4m}\right)\right)\right\}\nonumber\\
&\quad+\frac{\pi i}{2}\sum_{j=0}^{m-1} \log\left(\cos\left(\frac{\pi(2j+1)}{4m}\right)\right)-\sum_{j=0}^{m-1}\log\left(\sin\left(\frac{\pi(2j+1)}{4m}\right)\right) \log\left(\cos\left(\frac{\pi(2j+1)}{4m}\right)\right)\nonumber\\
&\quad-\frac{\pi i}{2m}\sum_{j=0}^{m-1}j\left\{ \log\left(\sin\left(\frac{\pi(2j+1)}{4m}\right)\right)+\log\left(\cos\left(\frac{\pi(2j+1)}{4m}\right)\right)\right\}\nonumber\\
&=\frac{\pi^2m}{8}-\frac{\pi^2}{48m}-\frac{\pi i}{4m}\sum_{j=0}^{m-1}\log\left(\frac{1}{2}\sin\left(\frac{\pi(2j+1)}{2m}\right)\right)+\frac{\pi i}{2}\sum_{j=0}^{m-1} \log\left(\cos\left(\frac{\pi(2j+1)}{4m}\right)\right)\nonumber\\
&\quad-\frac{\pi i}{2m}\sum_{j=0}^{m-1}j\log\left(\frac{1}{2}\sin\left(\frac{\pi(2j+1)}{2m}\right)\right)-\sum_{j=0}^{m-1}\log\left(\sin\left(\frac{\pi(2j+1)}{4m}\right)\right) \log\left(\cos\left(\frac{\pi(2j+1)}{4m}\right)\right).\nonumber
\end{align}
in the last step we used the fact $\sin(A)\cos(A)=\frac{1}{2}\sin(2A)$. The above expression can be further simplified to
\begin{align}\label{therek}
&\sum_{j=1}^{2m}\mathrm{Li}_2\left(\frac{1}{2}\left(1+e^{\frac{\pi(2j+1)}{2m}}\right)\right)\nonumber\\
&=\frac{\pi^2m}{8}-\frac{\pi^2}{48m}+\frac{\pi im}{4}\log(2)-\frac{\pi i}{4m}\sum_{j=0}^{m-1}\log\left(\sin\left(\frac{\pi(2j+1)}{2m}\right)\right)\nonumber\\
&\quad+\frac{\pi i}{2}\sum_{j=0}^{m-1} \log\left(\cos\left(\frac{\pi(2j+1)}{4m}\right)\right)-\frac{\pi i}{2m}\sum_{j=0}^{m-1}j\log\left(\sin\left(\frac{\pi(2j+1)}{2m}\right)\right)\nonumber\\
&\quad-\sum_{j=0}^{m-1}\log\left(\sin\left(\frac{\pi(2j+1)}{4m}\right)\right) \log\left(\cos\left(\frac{\pi(2j+1)}{4m}\right)\right).
\end{align}
Invoking Lemma \ref{finite sum evaluations} in \eqref{therek}, we are led to 
\begin{align}\label{li2 to log}
&\sum_{j=1}^{2m}\mathrm{Li}_2\left(\frac{1}{2}\left(1+e^{\frac{\pi(2j+1)}{2m}}\right)\right)\nonumber\\
&=\frac{\pi^2m}{8}-\frac{\pi^2}{48m}-\sum_{j=0}^{m-1}\log\left(\sin\left(\frac{\pi(2j+1)}{4m}\right)\right)\log\left(\cos\left(\frac{\pi(2j+1)}{4m}\right)\right).
\end{align}
Substituting \eqref{li2 to log} in \eqref{specialization}, we arrive at \eqref{J evaluation at even eqn}.

Other part is simply a consequence of \eqref{J evaluation at even eqn} and \eqref{fe ja}.
\end{proof}

We note that Anthony and Batir \cite[p.~12]{ab} obtained the following equivalent version of \eqref{J evaluation at even eqn}:
\begin{align}\label{ab result}
J(2m)&=\frac{\pi^2}{48m}-\frac{\pi^2m}{24}+\log^2(2)+\frac{1}{2}\sum_{j=0}^{2m-1}\log^2\left(2\sin\left(\frac{\pi(2j+1)}{4m}\right)\right)\nonumber\\
&\qquad-\frac{1}{2}\sum_{j=0}^{m-1}\log^2\left(2\sin\left(\frac{\pi(2j+1)}{2m}\right)\right).
\end{align}
Equation \eqref{ab result} can be reduced to \eqref{J evaluation at even eqn}. It is shown next. Observe that
\begin{align}\label{ab1}
&\sum_{j=0}^{2m-1}\log^2\left(2\sin\left(\frac{\pi(2j+1)}{4m}\right)\right)\nonumber\\
&=\sum_{j=0}^{m-1}\log^2\left(2\sin\left(\frac{\pi(2j+1)}{4m}\right)\right)+\sum_{j=0}^{m-1}\log^2\left(2\cos\left(\frac{\pi(2j+1)}{4m}\right)\right).
\end{align}
Using the duplication formula $\sin\frac{\pi(2j+1)}{2m}=2\sin\frac{\pi(2j+1)}{4m}\cos\frac{\pi(2j+1)}{4m}$, we have 
\begin{align}
&\log^2\left(2\sin\left(\frac{\pi(2j+1)}{4m}\right)\right)+\log^2\left(2\cos\left(\frac{\pi(2j+1)}{4m}\right)\right)-\log^2\left(2\sin\left(\frac{\pi(2j+1)}{2m}\right)\right)\nonumber\\
&=-2\log^2(2)-2\log(2)\log^2\left(\sin\left(\frac{\pi(2j+1)}{4m}\right)\right)-2\log(2)\log^2\left(\cos\left(\frac{\pi(2j+1)}{4m}\right)\right)\nonumber\\
&\qquad-2\log\left(\sin\left(\frac{\pi(2j+1)}{4m}\right)\right)\log\left(\cos\left(\frac{\pi(2j+1)}{4m}\right)\right).\nonumber
\end{align}
Now taking sum $\sum_{j=0}^{m-1}$ on both sides of the above equation and using \eqref{log sin cos eqn} and \eqref{ab1}, we see that
\begin{align*}
&\frac{1}{2}\sum_{j=0}^{2m-1}\log^2\left(2\sin\left(\frac{\pi(2j+1)}{4m}\right)\right)-\frac{1}{2}\sum_{j=0}^{m-1}\log^2\left(2\sin\left(\frac{\pi(2j+1)}{2m}\right)\right)\nonumber\\
&=(m-1)\log^2(2)-\sum_{j=0}^{m-1}\log\left(\sin\left(\frac{\pi(2j+1)}{4m}\right)\right)\log\left(\cos\left(\frac{\pi(2j+1)}{4m}\right)\right).
\end{align*}
After substituting the above evaluation in \eqref{ab result}, it turns out that \eqref{ab result} reduces to \eqref{J evaluation at even eqn}.


\subsection{Proofs of results on the function $T(x)$}

We first present the proof of the fact that $T(x)$ can be written as linear combination of the Herglotz-Zagier-Novikov function $\mathscr{F}\left(x;u,v\right)$.
\begin{proof}[Theorem \textup{\ref{representation of T(x)}}][]
Note that, by using the definition of $\mathscr{F}\left(x;u,v\right)$ from \eqref{novikov function} and simplifying, we see that
\begin{align*}
&\mathscr{F}\left(x;i,i\right)+\mathscr{F}\left(x;-i,-i\right)-\mathscr{F}\left(x;i,-i\right)-\mathscr{F}\left(x;-i,i\right)\\
&=2i\int_0^1\frac{1}{t^2+1}\left(\log(1-it^x)-\log(1+it^x)\right)dt\\
&=-2i\int_0^1\frac{1}{t^2+1}\sum_{n=1}^\infty\frac{i^nt^{nx}}{n}\left(1-(-1)^n\right)dt\\
&=-4i\int_0^1\frac{1}{t^2+1}\sum_{n=0}^\infty\frac{i^{2n+1}t^{(2n+1)x}}{2n+1}\\
&=4\int_0^1\frac{\mathrm{tan}^{-1}(t^x)}{t^2+1},
\end{align*}
which follows upon using the fact that $\mathrm{tan}^{-1}(t)=\sum_{n=0}^\infty\frac{(-1)^{n}}{2n+1}t^{2n+1}$. We arrive at \eqref{representation of T(x) eqn} after employing \eqref{T(x) defn}.
\end{proof}

We next prove the relation between the functions $J(x),\ T(x)$ and $\mathscr{F}(x;u,v)$.
\begin{proof}[Theorem \textup{\ref{relation b/w j t f}}][]
We invoke \eqref{sum formula for Jx eqn2} so that
\begin{align}\label{fii}
\mathscr{F}(x;i,i)=\mathscr{F}\left(\frac{x}{2};i,-1\right)-\mathscr{F}(x;i,-i)=-J(x)-\mathscr{F}\left(\frac{x}{2};-i,-1\right)-\mathscr{F}(x;i,-i),
\end{align}
the last equality follows by first using \eqref{sum formula for Jx eqn1} and then \eqref{j as special case}. Similarly,
\begin{align}\label{f-i-i}
\mathscr{F}(x;-i,-i)=-J(x)-\mathscr{F}\left(\frac{x}{2};i,-1\right)-\mathscr{F}(x;-i,i).
\end{align}
Observe that $-\mathscr{F}\left(\frac{x}{2};i,-1\right)-\mathscr{F}\left(\frac{x}{2};-i,-1\right)=J(x)$. Employing this fact along with \eqref{fii} and \eqref{f-i-i} in \eqref{representation of T(x) eqn}, we complete the proof of \eqref{relation b/w j t f eqn}.
\end{proof}

As an applications of Theorem \ref{twoterm fe} and \ref{representation of T(x)}, we  now get the functional equation of $T(x)$.
\begin{proof}[Theorem \textup{\ref{fe for T(x)}}][]
Invoking Theorem \ref{representation of T(x)} twice, we obtain
\begin{align}
T(x)+T\left(\frac{1}{x}\right)&=\frac{1}{4}\left\{\left[\mathscr{F}\left(x;i,i\right)+\mathscr{F}\left(\frac{1}{x};i,i\right)\right]+\left[\mathscr{F}\left(x;-i,-i\right)+\mathscr{F}\left(\frac{1}{x};-i,-i\right)\right]\right.\nonumber\\
&\left.-\left[\mathscr{F}\left(x;i,-i\right)+\mathscr{F}\left(\frac{1}{x};-i,i\right)\right]-\left[\mathscr{F}\left(x;-i,i\right)+\mathscr{F}\left(\frac{1}{x};i,-i\right)\right]\right\}.\nonumber
\end{align}
Employing the functional equation \eqref{twoterm fe} for each square bracket in the above equation, we deduce that
\begin{align*}
&T(x)+T\left(\frac{1}{x}\right)\\
&=-\frac{1}{4}\left\{\log^2(1-i)+\log^2(1+i)-2\log(1-i)\log(1+i)\right\}\nonumber\\
&=-\frac{1}{4}\left\{\log(1-i)[\log(1-i)-\log(1+i)]-\log(1+i)[\log(1-i)-\log(1+i)]\right\}\nonumber\\
&=-\frac{1}{4}\log^2\left(\frac{1-i}{1+i}\right)=\frac{\pi^2}{16}.
\end{align*}
This completes the proof of \eqref{fe for T(x) eqn}.
\end{proof}

\begin{remark}
The functional equation \eqref{fe for T(x) eqn} for $T(x)$ can also be proved directly by adopting the same arguments as we have given for the proof of \eqref{twoterm fe}.
\end{remark}

\begin{proof}[Corollary \textup{\ref{T(n) eval thm}}][]
Equation \eqref{T(n) eval} follows readily from Corollary \ref{novikov funct eval at n} and Theorem \ref{representation of T(x)} and upon simplifying. 

Theorem \ref{fe for T(x)} and \eqref{T(n) eval} together yield \eqref{T(1/n) eval}.
\end{proof}

\section{Tables containing special values of $J(x)$ and $T(x)$}\label{tables}
As mentioned earlier, Herglotz \cite{her1} and Radchenko and Zagier \cite{raza} have found evaluations for the functions $J(x)$, while Muzaffar and Williams \cite{muzwil} have provided evaluations for both functions $J(x)$ and $T(x)$. In the tables below, we provide some of the new special values of these functions employing our Theorem \ref{fe ja} and  Theorem \ref{fe for T(x)} along with \cite[Theorems 7 \& 10 ]{muzwil}. More such evaluations can be found using our Theorem \ref{fe ja} and  Theorem \ref{fe for T(x)} and the results of Herglotz \cite{her1} and Radchenko and Zagier \cite{raza}.

\begin{center}
\textbf{Table 1:}\vspace{2mm}
\begin{tabular}{ |p{2cm}||p{12.5cm}|}
 \hline
 \multicolumn{2}{|c|}{\textbf{Special values of $J(x)$}} \\
 \hline
 \hspace{0.6cm} $x$ & \hspace{5cm} $J(x)$ \vspace{1mm}\\
 \hline
 $2-\sqrt{3}$ & $\log^2(2)-\frac{\pi^2}{12}(1-\sqrt{3})-\log(2)\log(1+\sqrt{3})$\\
\hline
$3-\sqrt{8}$   & $\frac{1}{2}\log^2(2)-\frac{\pi^2}{24}(3-\sqrt{32})-\frac{3}{4}\log(2)\log(3+\sqrt{8})$    \\
\hline
$4-\sqrt{15}$ & $\log^2(2)-\frac{\pi^2}{12}(2-\sqrt{15})-\log\left(\frac{1+\sqrt{5}}{2}\right)\log(2+\sqrt{3})-\log(2)\log(\sqrt{3}+\sqrt{5})$\\
\hline
$5-\sqrt{24}$&$\frac{1}{2}\log^2(2)-\frac{\pi^2}{24}(5-\sqrt{96})-\frac{1}{2}\log(1+\sqrt{2})\log(2+\sqrt{3})-\frac{3}{4}\log(2)\log(5+\sqrt{24})$ \\
\hline
$6-\sqrt{35}$ & $\log^2(2)-\frac{\pi^2}{12}(3-\sqrt{35})-\log\left(\frac{1+\sqrt{5}}{2}\right)\log(8+3\sqrt{7})-\log(2)\log(\sqrt{5}+\sqrt{7})$\\
\hline
$8-\sqrt{63}$ & $\log^2(2)-\frac{\pi^2}{12}(4-\sqrt{63})-\log\left(\frac{5+\sqrt{21}}{2}\right)\log(2+\sqrt{3})-\log(2)\log(3+\sqrt{7})$\\
\hline
$11-\sqrt{120}$ & $\frac{1}{2}\log^2(2)-\frac{\pi^2}{24}(11-\sqrt{480})-\frac{1}{2}\log(1+\sqrt{2})\log(4+\sqrt{15})-\frac{1}{2}\log(2+\sqrt{3})\log(3+\sqrt{10})-\frac{1}{2}\log\left(\frac{1+\sqrt{5}}{2}\right)\log(5+\sqrt{24})-\frac{3}{4}\log(2)\log(11+\sqrt{120})$ \\
\hline
$12-\sqrt{143}$ & $\log^2(2)-\frac{\pi^2}{12}(6-\sqrt{143})-\log\left(\frac{3+\sqrt{13}}{2}\right)\log(10+3\sqrt{11})-\log(2)\log(\sqrt{11}+\sqrt{13})$\\
\hline
$13-\sqrt{168}$&$\frac{1}{2}\log^2(2)-\frac{\pi^2}{24}(13-\sqrt{672})-\frac{1}{2}\log(1+\sqrt{2})\log\left(\frac{5+\sqrt{21}}{2}\right)-\frac{1}{4}\log(2+\sqrt{3})\log(15+\sqrt{224})-\frac{1}{4}\log(5+\sqrt{24})\log(8+\sqrt{63})-\frac{3}{4}\log(2)\log(13+\sqrt{168})$\\
\hline
$14-\sqrt{195}$ & $\log^2(2)-\frac{\pi^2}{12}(7-\sqrt{195})-\log\left(\frac{1+\sqrt{5}}{2}\right)\log(25+4\sqrt{39})-\log\left(\frac{3+\sqrt{13}}{2}\right)\log(4+\sqrt{15})-\log(2)\log(\sqrt{15}+\sqrt{13})$ \\
\hline
\end{tabular}
\end{center}
\begin{center}
\textbf{Table 2:}\vspace{2mm}
\begin{tabular}{ |p{2cm}||p{12.5cm}|}
 \hline
 \multicolumn{2}{|c|}{\textbf{Special values of $T(x)$}} \\
 \hline
 \hspace{0.8cm}$x$ & \hspace{5cm} $T(x)$ \vspace{1mm}\\
 \hline
 $3-\sqrt{8}$   & $\frac{1}{16}\big\{\pi^2-\log(2)\log(3+\sqrt{8})\big\}$    \\
\hline
$5-\sqrt{24}$   & $\frac{1}{16}\big\{\pi^2+\log(2)\log(5+\sqrt{24})\big\}-\frac{1}{8}\log(1+\sqrt{2})\log(2+\sqrt{3})$    \\
\hline
$11-\sqrt{120}$ & $\frac{1}{16}\big\{\pi^2-\log(2)\log(11+\sqrt{120})\big\}+\frac{1}{8}\log(1+\sqrt{2})\log(4+\sqrt{15})+\frac{1}{8}\log(2+\sqrt{3})\log(3+\sqrt{10})-\frac{3}{8}\log\left(\frac{1+\sqrt{5}}{2}\right)\log(5+\sqrt{24})$ \\
\hline
$13-\sqrt{168}$ & $\frac{1}{16}\big\{\pi^2+\log(2)\log(13+\sqrt{168})+6\log(1+\sqrt{2})\log\left(\frac{5+\sqrt{21}}{2}\right)-\log(2+\sqrt{3})\log(15+\sqrt{224})-\log(5+\sqrt{24})\log(8+\sqrt{63})\big\}$\\
\hline
\end{tabular}
\end{center}

\medskip

\noindent {\bf{Acknowledgement  }}\, We would like to thank the referees for numerous helpful comments and suggestions which greatly improved the exposition of this paper. The first author was partially supported by   NRF$2022R1A2B5B0100187111 $, NRF$2021R1A6A1A1004294412$ and the second author was partially supported by the grant  IBS-R$003$-D1  of the IBS-CGP, POSTECH, South Korea and the Fulbright-Nehru Postdoctoral Fellowship Grant 2846/FNPDR/2022. This work was intiated when the second author was a postdoc fellow at the IBS center for Geometry and Physics, POSTECH, South Korea.

\end{document}